\theoremstyle{plain}
\renewcommand{\theequation}{\arabic{section}.\arabic{equation}}
\renewcommand\thefigure{\thesection.\@arabic\c@figure}
\renewcommand{\thefigure}{\arabic{section}.\arabic{figure}}
\newtheorem{thm}{\bf Theorem}
\newenvironment{theorem}{\begin{thm}} {\end{thm}}
\newtheorem{cor}{\bf Corollary}
\newtheorem{prop}{Proposition}[section]
\newtheorem{lmm}{\bf Lemma}
\newenvironment{lemma}{\begin{lmm}}{\end{lmm}}
\theoremstyle{remark}
\newtheorem{rem}{\bf Remark}[section]
\theoremstyle{definition}
\newtheorem{defn}{\bf Definition}[section]
 \numberwithin{table}{section}
\def \ri {{\rm i}}
\def \IM {\Im}
\def \RE {\Re}
\renewcommand   \varphi  \theta
\renewcommand   \vartheta \phi
\renewcommand \wedge \times
\begin{document}
\bibliographystyle{plain}
%\bibliographystyle{unsrt}

%\graphicspath{{../Figures/}}
\baselineskip 13pt

\title[Asymptotic study] {Uniform bounds and  asymptotics of Generalized Gegenbauer functions of fractional degree}
\author[
	W. Liu \, $\&$\,    L. Wang
	]{\;\; Wenjie Liu${}^{1,2}$   \;\;  and\;\; Li-Lian Wang${}^{2}$
		}
	
	\thanks{${}^{1}$Department of Mathematics, Harbin  Institute of Technology, 150001, China.
The research of this author is partially  supported by the China Postdoctoral Science Foundation Funded Project (No. 2017M620113), the National Natural Science Foundation of China (No. 11801120) and the Fundamental Research Funds for the Central Universities (Grant No.HIT.NSRIF.2019058). \\
		\indent ${}^{2}$Division of Mathematical Sciences, School of Physical
		and Mathematical Sciences, Nanyang Technological University,
		637371, Singapore. The research  is partially supported by Singapore MOE AcRF Tier 1 Grant (RG 27/15), and
		 Singapore  MOE AcRF Tier 2 Grant (MOE2017-T2-2-144).
%		 \\
%		\indent ${}^{3}$State Key Laboratory of Computer Science/Laboratory of Parallel Computing,  Institute of Software, Chinese Academy of Sciences, Beijing 100190, China. The research of this author is partially   supported by the National Natural Science Foundation of China (91130014, 11471312 and 91430216).
	}
	
\begin{abstract}
The generalised  Gegenbauer functions of fractional degree (GGF-Fs), denoted by  ${}^{r\!}G^{(\lambda)}_\nu(x)$ (right GGF-Fs) and  ${}^{l}G^{(\lambda)}_\nu(x)$ (left GGF-Fs)  with $x\in (-1,1),$  $\lambda>-1/2$  and real $\nu\ge 0,$  are  special
functions  (usually non-polynomials), which are defined upon  the hypergeometric representation of the  classical Gegenbauer polynomial by allowing integer degree to be real fractional degree.  Remarkably,  the GGF-Fs become  indispensable for optimal error estimates of polynomial approximation to singular functions, and have intimate relations with several  families of nonstandard basis functions recently introduced for solving fractional differential equations.
% related to
%several families of nonstandard basis functions recently
%They turn out to have int
% are found indispensable for optimal error estimates of  the orthogonal polynomial approximation to functions in fractional Sobolev-type spaces involving Riemann-Liouville (RL)
%	fractional integrals/derivatives \cite{Liu2017arXiv}.
		However, some  properties of GGF-Fs, which are important pieces for the analysis and  applications, are unknown or under explored. The purposes of this paper are twofold.
The first is to show that for $\lambda,\nu>0$ and $x=\cos\theta$ with $\theta\in (0,\pi),$
	\begin{equation*}\label{IntRep-0N}
	(\sin \varphi)^{\lambda}\,{}^{r\!}G_\nu^{(\lambda)}(\cos \varphi)=
	\frac{2^\lambda\Gamma(\lambda+1/2)}{\sqrt{\pi} {(\nu+\lambda)^{\lambda}}} \, {\cos ((\nu+\lambda)\varphi- \lambda\pi/2)}
	+{\mathcal  R}_\nu^{(\lambda)} (\varphi),
	\end{equation*}
 and derive the  precise expression of  the ``residual" term ${\mathcal  R}_\nu^{(\lambda)} (\varphi).$
With this at our disposal, we   obtain  the  bounds of GGF-Fs
 uniform in $\nu.$  Under an appropriate weight function,  the bounds are uniform for  $\theta\in [0,\pi]$ as well.   %(but with a different weight function).
  %$|(\sin \varphi)^{\lambda}\,{}^{r\!}G_\nu^{(\lambda)}(\cos \varphi)|$  {\color{red} ???? This allows us to derive useful uniform bounds}
Moreover, we can study  the  asymptotics of GGF-Fs with large fractional degree $\nu.$
%In particular, the special case with integer $\nu>0$  improves some existing results on classical Gegenbauer polynomials.
The second is to  present miscellaneous properties of GGF-Fs for better understanding of this family of useful special functions.
\end{abstract}
\keywords{Generalized Gegenbauer functions of fractional degree,  asymptotic analysis,
 Riemann-Liouville fractional integrals/derivatives}
\subjclass[2010]{30E15, 41A10, 41A25,  41A60, 65G50}
%\subjclass[2000]{30E15, 41A10, 41A25,  41A60, 65G50}
\maketitle

\vspace*{-10pt}

\section{Introduction}\label{sect:introduction}

Undoubtedly,  polynomial approximation theory occupies a central place in algorithm development and numerical analysis of  perhaps most of computational  methods. Indeed, one finds  numerous approximation results in various senses documented in a large volume of literature, which particularly include orthogonal polynomial approximation results  related to spectral methods and $hp$-version finite element methods (see, e.g.,   \cite{Babuska1987MMAN,Schwab1998Book,Trefethen2013Book,Shen2011Book} and the  references therein).   Typically, such results are  established in   Jacobi-weighted Sobolev spaces with  integral-order regularity exponentials (see, e.g., \cite{Shen2011Book}),  or   weighted Besov spaces with fractional regularity exponentials  using the notion of   space interpolation
(see, e.g., \cite{Babuskaa2000NM,Babuska2001SINUM,Babuska2002MMMAS}).  In a very recent work \cite{Liu2017arXiv}, we introduced a new framework of  fractional  Sobolev-type spaces involving  Riemann-Liouville (RL) fractional integrals and derivatives in the study of polynomial approximation to singular functions.  Such spaces are naturally arisen
from exact representations of  orthogonal polynomial  expansion coefficients, and could  best characterize the fractional differentiability/regularity, leading to  optimal error estimates.
A very important  piece of the puzzle therein is the so-called GGF-Fs that generalize the classical Gegenbauer polynomials of integer degree to  functions of fractional degree.
It is noteworthy that the GGF-Fs can be generalized by different  means, e.g., the Rodrigues' formula and hypergeometric  function representation.
For instance,  the right GGF-F: ${}^{r\!}G^{(\lambda)}_\nu(x)$  can be viewed as special $g$-Jacobi functions (see Mirevski et al \cite{Mirevski2007AMC}), defined by replacing the integer-order derivative in
 the  Rodrigues' formula of the Jacobi polynomials by the RL fractional derivative.  However,  both the  definition and  derivation of some properties in \cite{Mirevski2007AMC} have flaws (see Remark \ref{correctA}). % and only some fractional calculus properties were presented therein.
On the other hand, the Handbook  \cite[(15.9.15)]{Olver2010Book}  listed ${}^{r\!}G^{(\lambda)}_\nu(x)$ but  without
presented  any of their properties.
Interestingly,  as pointed out in  \cite{Liu2017arXiv}, the GGF-Fs have a direct bearing on
 Jacobi polyfractonomial (cf. \cite{Zayernouri2013JCP}) and generalised Jacobi functions
  (cf. \cite{Guo2009ANM,Chen2016MC}) recently introduced in developing efficient spectral methods for fractional differential equations.
   It is also noteworthy that the seminal work of Gui and Babu\v{s}ka  \cite{Gui1986NM} on
     $hp$-estimates of Legendre approximation of singular functions essentially relied  on some non-classical Jacobi polynomials with the parameter $\alpha$ or $\beta<-1,$ which turned out  closely related to  GGF-Fs.
In a nutshell, the GGF-Fs (and more generally the generalised Jacobi functions of fractional degree) can be of great value for numerical analysis and computational algorithms, but many of their properties   are still  under explored.

It is known that the study of asymptotics has been a longstanding  subject of  special functions and their far reaching applications (see, e.g., \cite{Olver1974Book,Temme1996Book,Olver2010Book}).  Most of the asymptotic results of classical orthogonal polynomials   can be found in the books   \cite{Szego1975Book,Olver2010Book}, and  are reported in the review papers \cite{Lubinsky2000,Wong2000,Wong2017} in more general senses.    We highlight that the asymptotic formulas of the hypergeometric function: ${}_2F_1(a-\mu,b+\mu; c;  (1-z)/2)$ in terms of  Bessel functions  for large $\mu,$ were derived in  Jones \cite{Jones2001MMAS} following the idea of Olver \cite{Olver1974Book} using differential equations, where the representations with fewer restrictions on the parameters different from those in Watson \cite{Watson1918}  could be obtained.  Farid Khwaja and  Olde Daalhuis  \cite{Khwaja2014AA}  discussed asymptotics of  ${}_2F_1(a-e_1\mu,b+e_2\mu; c+e_3\mu;  (1-z)/2)$ with $e_j=0,\pm 1, j=1,2,3$  in terms of Bessel functions by using the  contour integrals.

 One of the main objectives of this paper is to derive the uniform bounds for the GGF-Fs, which  are  valid  for real degree $\nu>0$  with fixed $\lambda,$ and
also for all $\theta\in [0,\pi]$ but with a suitable weight function to absorb the singularities at the endpoints.
As such, we  can obtain the asymptotic formulas for large degree $\nu,$ and some other useful estimates of the GGF-Fs.
Our delicate analysis is based on an integral representation from a very useful fractional integral formula  in  \cite{Liu2017arXiv} (see \eqref{FCI++} and Lemma \ref{importformLa}).
 In fact, the
Watson's Lemma  and  asymptotic analysis for  Legendre polynomials  (cf. \cite{Olver1974Book}) indeed cast light on our study.  It is important to point out the GGF-Fs are defined as hypergeometric functions with special parameters   (see Definition \ref{defnGGG}), so some asymptotic results follow from  \cite{Jones2001MMAS,Khwaja2014AA} for large parameters in terms of Bessel functions.  However,  we intend to derive the results uniform for the degree and the variable, and the estimates for large parameters are directly consequences.
In other words, our study can lead to different and more explicitly informative estimates.
As such, the results herein can offer useful tools for  analysis of polynomial approximation and applications of this family of special functions. %   as we mentioned earlier.
%focus on the asymptotic study and derive their asymptotic estimates from
%  the uniform  bounds in a stronger sense.
A second purpose of this paper is to present various  properties of GGF-Fs. These particularly
include   singular  behaviors of GGF-Fs in the vicinity of the endpoints, and useful fractional calculus formulas.

 The paper is organized as follows.   In  Section \ref{mainsect2}, we first introduce the definition of GGF-Fs, and then present the main results. We then provide  their  proofs in Section \ref{sect:proofs}.
In the last section, we present assorted properties of GGF-Fs for better understanding of this family of special functions.

\section{Main result and its proof}\label{mainsect2}

\subsection{Generalised Gegenbauer functions of fractional degree}  Different from  Mirevski et al \cite{Mirevski2007AMC}, we follow
  \cite{Liu2017arXiv} to define two types of  GGF-Fs by  the hypergeometric function.
  \begin{defn}\label{defnGGG}{\em
   For real $\lambda>-1/2,$ we define the right GGF-F on $(-1,1)$ of real degree $\nu\ge 0$ as
\begin{equation}\label{rgjfdef}
{}^{r\!}G_\nu^{(\lambda)}(x)=\, {}_2F_1\Big(\!\!-\nu, \nu+2\lambda;\lambda+\frac 1 2;\frac{1-x} 2\Big)=1+\sum_{j=1}^\infty \frac{(-\nu)_j(\nu+2\lambda)_j}{
	j!\; (\lambda+1/2)_j }\Big(\frac{1-x}{2}\Big)^j,
\end{equation}
and the left  GGF-F  of real degree $\nu\ge 0$  as
\begin{equation}\label{lgjfdef}
\begin{split}
{}^{l}G_\nu^{(\lambda)}(x)= (-1)^{[\nu]}\, {}_2F_1\Big(\!\!-\nu, \nu+2\lambda;\lambda+\frac 1 2;\frac{1+x} 2\Big)= (-1)^{[\nu]}\,  {}^{r\!}G_\nu^{(\lambda)}(-x),
\end{split}
\end{equation}
where $[\nu]$ is the largest integer $\le \nu,$ and the Pochhammer symbol: $(a)_j=a(a+1)\cdots (a+j-1).$}
\end{defn}
In the above,  the  hypergeometric function is a power series given by
\begin{equation}\label{hyperboscs}
{}_2F_1(a,b;c; z)=1+\sum_{j=1}^\infty \frac{(a)_j(b)_j}{(c)_j}\frac{z^j}{j!},
% =
%1+
%\sum_{j=1}^\infty \frac{a(a+1)\cdots (a+j-1)}{1\cdot 2\cdots j} \frac{b(b+1)\cdots (b+j-1)}
%{c (c+1)\cdots (c+j-1)}
%{z^j},
\end{equation}
where $a,b,c$ are real,  and   $-c\not \in {\mathbb N}:=\{1,2,\cdots \}$  (see, e.g.,  \cite{Andrews1999Book}).  %    Denote ${\mathbb N}_0:=\{0\}\cup {\mathbb N}.$

Note that if  $\nu=n\in {\mathbb N}_0:=\{0\}\cup {\mathbb N}$,  we have
\begin{equation}\label{obsvers0}
{}^{r\!}G_n^{(\lambda)}(x)=  {}^lG_n^{(\lambda)}(x)=G_n^{(\lambda)}(x)=\frac{P_n^{(\lambda-1/2,\lambda-1/2)}(x)} {P_n^{(\lambda-1/2,\lambda-1/2)}(1)},\;\;\; \lambda>-\frac 1 2,
\end{equation}
where  $P_n^{(\alpha,\beta)}(x)$ is  the classical  Jacobi polynomial as defined in  Szeg\"o \cite{Szego1975Book}.   For  $\lambda=1/2,$  the right GGF-F  turns to be  the Legendre function (cf. \cite{Temme1996Book}):   ${}^{r}G_\nu^{(1/2)}(x)=P_\nu(x).$
For $\lambda=0,$ we  have
\begin{equation}\label{Chebnudefn}
{}^{r\!}G_\nu^{(0)}(x)={}^{r\!}G_\nu^{(0)}(\cos \varphi)=\cos (\nu \varphi)= \cos (\nu\,  {\rm arccos} x):=T_\nu(x),
\end{equation}
thanks to the property (cf. \cite[(15.1.17)]{Abramowitz1972Book}):
\begin{equation}\label{newformulas}
{}_2F_1(-a, a,1/2\,;\, \sin^2t)=\cos(2 a t),\quad  a, t\in {\mathbb R}:=(-\infty,\infty).
\end{equation}
\begin{rem}\label{specialGGFs}{\em
 The GGF-Fs ${}^{r\!}G_{n-\lambda+1/2}^{(\lambda)}(x)$ and  ${}^{l}G_{n-\lambda+1/2}^{(\lambda)}(x)$ with integer $n$ up to some constant multiple,
    coincide with some nonstandard singular basis functions introduced in \cite{Zayernouri2013JCP,Chen2016MC} for accurate solution of fractional differential equations.}
 \end{rem}
%In  the Handbook  \cite[(15.9.15)]{Olver2010Book},  ${}^{r\!}G_\nu^{(\lambda)}(x)$ (with real $\nu\ge 0,$ and a different normalisation) was  defined  as the Gegenbauer function. However, there  was  nearly no  discussion on its properties.  Mirevski et al \cite{Mirevski2007AMC} introduced  the (generalised or) $g$-Jacobi function based on the (fractional) Rodrigues' formula. The right GGF-F  ${}^{r\!}G_\nu^{(\lambda)}(x)$ coincides with  the $g$-Jacobi function   with $\alpha=\beta=\lambda-1/2>-1$  \cite[Thm 12]{Mirevski2007AMC}. However, there are flaws in both the definition and derivation of some fractional calculus properties.

Inherited from the Bateman's fractional integral formula for  hypergeometric functions  (cf. \cite[P. 313]{Andrews1999Book}),  we can derive the following very useful formula (cf. \cite[Thm. 3.1]{Liu2017arXiv}):  for $\lambda>-1/2,$ and real $   \nu\ge s\ge 0,$
\begin{equation}\label{FCI++}
	\begin{split}
	{}_{x} I_{1}^{s}\big\{(1-x^2)^{\lambda-1/2}
	\,{}^{r\!}G_{\nu}^{(\lambda)}(x)\big\}
	&= \frac 1 {\Gamma(s)}\int_{x}^1 \frac{(1-y^2)^{\lambda-1/2}
	\,{}^{r\!}G_{\nu}^{(\lambda)}(y)}{(y-x)^{1-s}} dy
	\\&=\frac{\Gamma(\lambda+1/2)}{2^s\Gamma(\lambda+s+1/2)}\, (1-x^2)^{\lambda+s-1/2}\, {}^{r\!}G_{\nu-s}^{(\lambda+s)}(x),\;\;
	\end{split}
	\end{equation}
where ${}_{x} I_{1}^{s}$ is the right-sided  RL   fractional integral operator defined  by
\begin{equation}\label{IsIntegral}
{}_{x}I_{1}^s\, u (x)=\frac 1 {\Gamma(s)}\int_{x}^1 \frac{u(y)}{(y-x)^{1-s}} dy.
 \end{equation}
 Note that a similar formula is available for the left GGF-F  ${}^{l}G_{\nu}^{(\lambda)}(x)$  but associated with the left-sided  RL fractional integral.
%We point out  that the formula \eqref{FCI++} is of fundamental importance in the error analysis of polynomial approximations in the new framework in  \cite{Liu2017arXiv}.

Thanks to \eqref{FCI++}, we can  derive  the following formula    crucial for the analysis.
 \begin{lemma}\label{importformLa} For real $\nu, \lambda\ge 0,$ we have
	\begin{equation}\label{IntRep}
	(\sin \varphi )^{2\lambda-1}\,{}^{r\!}G_\nu^{(\lambda)}(\cos \varphi)=
	\frac{2^\lambda\,\Gamma(\lambda+1/2)}{\sqrt{\pi}\, \Gamma(\lambda)}\int^\varphi_0\frac{\cos((\nu+\lambda)\vartheta)}{(\cos \vartheta-\cos\varphi)^{1-\lambda}}\,d\vartheta,
	\end{equation}
	for any $\varphi\in (0,\pi).$
 \end{lemma}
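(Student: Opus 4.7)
The plan is to derive the integral representation directly from the fractional integral identity \eqref{FCI++} by specializing its parameters so that the Gegenbauer function on the left collapses to a Chebyshev cosine via \eqref{Chebnudefn}, and then performing the substitution $y=\cos\vartheta$ to land on the trigonometric kernel on the right.

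First, I would apply \eqref{FCI++} with the running parameter $\lambda$ there replaced by $0$, with $s$ replaced by the present $\lambda$, and with the degree replaced by $\nu+\lambda$. Since $\nu+\lambda\ge\lambda$, the compatibility condition in \eqref{FCI++} is satisfied, so \eqref{FCI++} gives
\[
\frac{1}{\Gamma(\lambda)}\int_x^1 \frac{(1-y^2)^{-1/2}\,{}^{r\!}G_{\nu+\lambda}^{(0)}(y)}{(y-x)^{1-\lambda}}\,dy
=\frac{\sqrt{\pi}}{2^{\lambda}\Gamma(\lambda+1/2)}\,(1-x^{2})^{\lambda-1/2}\,{}^{r\!}G_{\nu}^{(\lambda)}(x),
\]
using $\Gamma(1/2)=\sqrt{\pi}$ on the right-hand side coefficient.

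Next, by the $\lambda=0$ identity \eqref{Chebnudefn}, ${}^{r\!}G_{\nu+\lambda}^{(0)}(y)=\cos((\nu+\lambda)\arccos y)$. I would substitute $y=\cos\vartheta$ and $x=\cos\varphi$ for $\varphi\in(0,\pi)$: this gives $dy=-\sin\vartheta\,d\vartheta$, $(1-y^{2})^{-1/2}=1/\sin\vartheta$, $y-x=\cos\vartheta-\cos\varphi$, and the limits transform $y:x\to 1$ into $\vartheta:\varphi\to 0$. The $\sin\vartheta$ factors cancel after accounting for the sign from reversing the limits, and the left-hand integral becomes
\[
\int_{0}^{\varphi}\frac{\cos((\nu+\lambda)\vartheta)}{(\cos\vartheta-\cos\varphi)^{1-\lambda}}\,d\vartheta.
\]

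Finally, I would insert this into the previous identity and solve for $(\sin\varphi)^{2\lambda-1}\,{}^{r\!}G_{\nu}^{(\lambda)}(\cos\varphi)$, yielding exactly \eqref{IntRep} with the prefactor $2^{\lambda}\Gamma(\lambda+1/2)/(\sqrt{\pi}\,\Gamma(\lambda))$. The argument is then complete for $\lambda>0$; the case $\lambda=0$ is degenerate (both $\Gamma(\lambda)$ and the integral diverge in a compensating way) and can be obtained by a limiting argument if needed. There is no real obstacle: the only small care point is tracking the orientation of the substitution and verifying that the compatibility condition $\nu+\lambda\ge\lambda$ in \eqref{FCI++} holds, both of which are routine.
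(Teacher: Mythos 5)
Your proposal is correct and follows essentially the same route as the paper: the authors likewise specialize \eqref{FCI++} to $\lambda=0$, rename $s\mapsto\lambda$ and $\nu\mapsto\nu+\lambda$, and then apply the substitution $x=\cos\varphi$, $y=\cos\vartheta$ to obtain \eqref{IntRep}. The constant bookkeeping and orientation of the substitution in your write-up check out.
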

 \begin{proof}
 From \eqref{Chebnudefn} and \eqref{FCI++} with $\lambda=0$,   we obtain immediately  that for $\nu\ge s\ge 0,$
	\begin{equation}\label{intGLfs}
	(1-x^2)^{s- 1/2} \,  {}^{r\!}G_{\nu-s}^{(s)}(x)=\frac{2^s\,\Gamma(s+1/2)} {\sqrt \pi\, \Gamma(s)}\int_x^1 \frac{1}{(y-x)^{1-s}}  \frac{T_\nu (y)} {\sqrt{1-y^2}}\, dy.
	\end{equation}
Substituting $s$ and $\nu$ in the above identity by $\lambda$ and $\nu+\lambda,$ respectively, and using a change of variables: $x=\cos \varphi$  and $y=\cos\vartheta,$ we derive  \eqref{IntRep} from \eqref{intGLfs} straightforwardly.
\end{proof}
%we arrive at that
%Observe  from  \eqref{IntRep} that for $\lambda=1,$
%\begin{equation}\label{abscase}
%{}^{r\!}G_\nu^{(1)}(\cos \varphi)=\frac 1 {\nu+1} \frac{\sin ((\nu+1)\varphi)}{\sin \varphi}, \quad \nu\ge 0,\;\;\; \theta\in (0,\pi).
%\end{equation}
\begin{rem}\label{marknote}{\em
If  $\lambda=1/2$ and $\nu=n, $ the identity \eqref{IntRep} leads to the first Dirichlet-Mehler formula for the Legendre polynomial  {\rm(cf. \cite[(6.51)]{Temme1996Book}):}
\begin{equation}\label{formulaA}
P_n(\cos\theta) =\frac{\sqrt 2}\pi \int_0^\theta \frac{\cos(n+ 1/2) \phi }{\sqrt{\cos \phi-\cos\theta}}\, d\phi,\;\;\;\;\;  \theta\in (0,\pi),\;\;  n\in {\mathbb N}_0.
\end{equation}
One approach to obtain the asymptotic formula for Legendre polynomial with  $n\to \infty$  is based on this formula, and the Watson's lemma {\rm(cf. \cite[P. 113]{Olver1974Book}).}
This useful argument  indeed  sheds light on the study of GGF-Fs herein.   However,  we aim to study  the behaviour of GGF-Fs uniform for all $\nu,$ so  the route appears very different, delicate and more involved.
 }
\end{rem}

\subsection{Main results}  We first state the results, whose proofs are given
in Section \ref{sect:proofs}. 	 Here, we just consider the right GGF-Fs, but thanks to  \eqref{lgjfdef}, similar results can be obtained for  the left counterparts.
%$\big\{{}^{l}G_\nu^{(\lambda)}(x)\big\}.$
%The main result is  stated as follows, and the detailed proof is provided in the forthcoming section.
\begin{thm}
	\label{AsympGeg-A}
	%\cite[(15.12.5), (10.40.1) with (4.37.22)]{Olver2010Book}
	For   $\lambda >0$ and  $\varphi\in (0,\pi),$  we have %that  $\varphi\in (0,\pi),$
	\begin{equation}\label{IntRep-0}
	\begin{split}
	(\sin \varphi)^{\lambda}\,{}^{r\!}G_\nu^{(\lambda)}(\cos \varphi)=
	\frac{2^\lambda\Gamma(\lambda+1/2)}{\sqrt{\pi} {(\nu+\lambda)^{\lambda}}} \, {\cos ((\nu+\lambda)\varphi- \lambda\pi/2)}
	+{\mathcal  R}_\nu^{(\lambda)} (\varphi),
	%+\frac{\sin^{1-\lambda} \varphi \, {\mathcal  R}_\nu^{(\lambda)} (\varphi)}{\Gamma(\lambda)},
	%O(\nu^{-\lambda-1}\sin^{-1}\varphi).
	%(\nu \sin \varphi)^{-1}?
	\end{split}\end{equation}
	where the ``residual" term  ${\mathcal  R}_\nu^{(\lambda)} (\varphi)$ with a representation given by  \eqref{ResidualRnu}, and   there holds
\begin{equation}\label{uniformBndR}
 |{\mathcal  R}_\nu^{(\lambda)} (\varphi)|\le {\mathcal  S}_\nu^{(\lambda)} (\varphi), \quad \forall\, \theta\in (0,\pi).
\end{equation}
  Here, the bound  ${\mathcal S}_\nu^{(\lambda)}(\theta) $ is  given by
	\begin{itemize}\item[(i)]
for $0<\lambda\le 2, \nu+\lambda>1$ and $\nu>0,$	
		\begin{equation}\begin{split}\label{BoundIntFunft-A}	
		 {\mathcal S}_\nu^{(\lambda)} (\varphi)= \frac{\lambda|\lambda-1|2^\lambda\Gamma(\lambda+1/2)}{\sqrt{\pi}(\nu+\lambda-1)^{\lambda+1}} \Big\{|\cot \varphi|+\frac 2 3 \frac{\lambda+1} {\nu+\lambda-1}\Big\};
		\end{split}\end{equation}
		\item[(ii)]  for $\lambda> 2, \nu>\lambda-3$ and $\nu>0,$
		\begin{equation}\label{BoundIntFunft-B}
		\begin{split}
		{\mathcal S}_\nu^{(\lambda)} (\varphi)
		&=
		 \frac{\lambda(\lambda-1)2^{3\lambda/2}\Gamma(\lambda+1/2)}{\sqrt{\pi}(\nu+1)^{\lambda+1}} \Big\{|\cot \varphi|+
		\frac{2}{3}\frac{\lambda+1}{\nu+1}\\
		&\quad +\frac{ 2^{2-\lambda} \Gamma(2\lambda-1)}{\Gamma(\lambda+1)} \frac{(\nu+1)^{\lambda+1}}{(\nu-\lambda+3)^{2\lambda-1}}|\cot \varphi|^{\lambda-2}
		\Big( |\cot \varphi|+\frac{2}{3}\frac{2\lambda-1}{\nu-\lambda+3}\Big)\Big\}.
		\end{split}
		\end{equation}
	\end{itemize}
	\end{thm}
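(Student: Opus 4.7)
The plan is to reduce the theorem to an asymptotic analysis of the oscillatory integral provided by Lemma \ref{importformLa}. With $\mu:=\nu+\lambda$, identity \eqref{IntRep} gives
\[
(\sin\varphi)^{\lambda}\,{}^{r\!}G_\nu^{(\lambda)}(\cos\varphi)
 = \frac{2^\lambda\Gamma(\lambda+1/2)}{\sqrt\pi\,\Gamma(\lambda)}\,(\sin\varphi)^{1-\lambda}
\int_0^\varphi \frac{\cos(\mu\vartheta)}{(\cos\vartheta-\cos\varphi)^{1-\lambda}}\,d\vartheta,
\]
so it suffices to show that the integral on the right equals $\Gamma(\lambda)\mu^{-\lambda}(\sin\varphi)^{\lambda-1}\cos(\mu\varphi-\lambda\pi/2)$ plus a correction controlled by the right-hand sides of (i)--(ii). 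As noted in Remark \ref{marknote}, for large $\mu$ the oscillations of $\cos(\mu\vartheta)$ concentrate the mass near the upper endpoint $\vartheta=\varphi$, which is precisely where the algebraic singularity of order $1-\lambda$ lives.

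To expose this endpoint behaviour, I would substitute $t=\varphi-\vartheta$ and use the identity $\cos(\varphi-t)-\cos\varphi=2\sin(t/2)\sin(\varphi-t/2)$ to factor
\[
(\cos\vartheta-\cos\varphi)^{\lambda-1}=(t\sin\varphi)^{\lambda-1}E(t,\varphi),\qquad E(t,\varphi):=\Big[\tfrac{\cos(\varphi-t)-\cos\varphi}{t\sin\varphi}\Big]^{\lambda-1},
\]
with $E(0^+,\varphi)=1$. Then I would split the integral into a principal part (replacing $E$ by $1$ and extending the $t$-range to $(0,\infty)$) and a residual consisting of the defect $E-1$ on $(0,\varphi)$ and the tail on $(\varphi,\infty)$. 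The principal part is evaluated via the Mellin-type identity
\[
\int_0^\infty s^{\lambda-1}\cos(\mu\varphi-s)\,ds=\Gamma(\lambda)\cos(\mu\varphi-\lambda\pi/2),
\]
valid classically for $0<\lambda<1$ and extended to $\lambda\ge 1$ either by contour rotation or by lowering $\lambda$ through integration by parts. After multiplication by the Lemma prefactor this reproduces the main term of \eqref{IntRep-0}, while the two discarded pieces assemble into the representation \eqref{ResidualRnu} of $\mathcal R_\nu^{(\lambda)}(\varphi)$.

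The bulk of the work is the uniform estimation of these two pieces. The tail $\int_{\mu\varphi}^\infty s^{\lambda-1}\cos(\mu\varphi-s)\,ds$ is treated by one integration by parts against $\cos(\mu\varphi-s)$, which supplies the factor $1/\mu$ responsible for the denominator $(\nu+\lambda-1)^{\lambda+1}$ in \eqref{BoundIntFunft-A}, and whose boundary contribution produces the second summand $\tfrac{2}{3}(\lambda+1)/(\nu+\lambda-1)$ inside the braces. For the defect integral, a Taylor expansion of $\cos(\varphi-t)-\cos\varphi$ yields the pointwise bound $|E(t,\varphi)-1|\lesssim |\lambda-1|\,t\,|\cot\varphi|$, and a further integration by parts against $\cos(\mu(\varphi-t))$ converts this into the $|\lambda-1||\cot\varphi|$-prefactor appearing in case (i). Keeping elementary trigonometric inequalities sharp then produces the explicit constants in \eqref{BoundIntFunft-A}.

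The main obstacle, and the reason for the additional summand in \eqref{BoundIntFunft-B}, is that when $\lambda>2$ one integration by parts on $(\cos\vartheta-\cos\varphi)^{\lambda-1}$ produces the factor $(\cos\vartheta-\cos\varphi)^{\lambda-2}$, whose integrability near $\vartheta=\varphi$ is too marginal for the first-order bound on $E-1$ to suffice. I would then iterate the splitting: a second integration by parts gains another $1/\mu$ but shifts the weight to $(\cos\vartheta-\cos\varphi)^{\lambda-3}$, generating the combinatorial factor $\Gamma(2\lambda-1)/\Gamma(\lambda+1)$ and the worse denominator $(\nu-\lambda+3)^{2\lambda-1}$; the higher-order Taylor term of $E-1$ is what is responsible for the $|\cot\varphi|^{\lambda-2}$ power. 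The bookkeeping of these constants is tedious but mechanical once the splittings are fixed; the genuinely delicate point is to arrange them so that the estimates remain uniform in $\varphi\in(0,\pi)$, avoiding blow-up at the endpoints $0$ and $\pi$.
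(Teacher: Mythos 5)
Your high-level reading of the problem is right (the integral in Lemma \ref{importformLa} is dominated by the endpoint $\vartheta=\varphi$, the main term should come from a Gamma-function integral, and the correction must be bounded uniformly), but the real-variable decomposition you propose has a genuine gap that the paper's proof is specifically designed to avoid. After the substitution $t=\varphi-\vartheta$, your ``principal part'' $(\sin\varphi)^{\lambda-1}\int_0^\infty t^{\lambda-1}\cos(\mu(\varphi-t))\,dt$ and your ``tail'' over $(\varphi,\infty)$ are divergent integrals for every $\lambda\ge 1$ (the integrand grows like $t^{\lambda-1}$), so the three-way split principal $+$ defect $+$ tail is only formal there; making it rigorous by ``contour rotation'' is not a side remark but is the entire content of the argument, and it is exactly what the paper does. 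In Lemma \ref{indtity} the function $F_\nu^{(\lambda)}(\varphi,\vartheta)=e^{\ri(\nu+\lambda)\vartheta}(\cos\vartheta-\cos\varphi)^{\lambda-1}$ is integrated over a rectangle in the complex $\vartheta$-plane (with indentations near $\pm\varphi$ when $0<\lambda<1$); the vertical sides produce an absolutely convergent Laplace integral $\int_0^\infty(\cdot)\,t^{\lambda-1}e^{-(\nu+\lambda)t}\,dt$ for all $\lambda>0$, whose leading piece $g(\varphi,0)^{\lambda-1}=(\ri\sin\varphi)^{\lambda-1}$ yields the main term via $\int_0^\infty t^{\lambda-1}e^{-(\nu+\lambda)t}\,dt=\Gamma(\lambda)(\nu+\lambda)^{-\lambda}$, and whose remainder is precisely the representation \eqref{ResidualRnu} that the theorem asserts. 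Your decomposition would not produce \eqref{ResidualRnu}, and without the exponential damping $e^{-(\nu+\lambda)t}$ you cannot reproduce the exact constants defining ${\mathcal S}_\nu^{(\lambda)}$, which arise from the identities $\int_0^\infty t^{\lambda+j}e^{-(\nu+\lambda-1)t}\,dt=\Gamma(\lambda+1+j)(\nu+\lambda-1)^{-\lambda-1-j}$ applied to the bounds of Lemma \ref{UperBoundf}.

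There is also a concrete error in your estimate of the defect. Writing $\cos(\varphi-t)-\cos\varphi=2\sin(t/2)\sin(\varphi-t/2)$ gives $E^{1/(\lambda-1)}=\frac{\sin(t/2)}{t/2}\bigl(\cos(t/2)-\cot\varphi\,\sin(t/2)\bigr)$, so $E-1$ contains a term of size $O(|\lambda-1|\,t^2)$ that is independent of $\cot\varphi$; your claimed bound $|E-1|\lesssim|\lambda-1|\,t\,|\cot\varphi|$ vanishes at $\varphi=\pi/2$ while $E-1$ does not, and it is exactly this second contribution that produces the summand $\tfrac23\tfrac{\lambda+1}{\nu+\lambda-1}$ inside the braces of \eqref{BoundIntFunft-A} (in the paper it comes from the $\tfrac{2t}{3}$ term in \eqref{ftbnds}, which traces back to the inequality $(t\cosh t-\sinh t)/t^3<\tfrac13\cosh t$ for $\partial_t g$, not from a boundary term of an integration by parts as you suggest). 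Likewise, for $\lambda>2$ the extra summand in \eqref{BoundIntFunft-B} is obtained in the paper not by iterating integrations by parts but by bounding $|g(\varphi,\xi)|^{\lambda-2}$ from above (case (ii) of Lemma \ref{UperBoundf}), which introduces the factor $|\cot\varphi|^{\lambda-2}t^{\lambda-2}e^{(\lambda-2)t}$ and, after integration against $t^{\lambda}e^{-(\nu+1)t}$, the quotient $\Gamma(2\lambda-1)/(\nu-\lambda+3)^{2\lambda-1}$. To complete a proof of the theorem as stated you would need to adopt the contour deformation of Lemma \ref{indtity} (or an equivalent regularization valid for all $\lambda>0$) and correct the pointwise bound on the defect accordingly.
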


%\begin{rem}\label{onlyone} {\em Thanks to  \eqref{lgjfdef}, similar results can be obtained for  the left counterparts
%${}^{l}G_\nu^{(\lambda)}(x)$. }
%\end{rem}
\vskip 4pt

 With Theorem \ref{AsympGeg-A} at our disposal,  we next estimate the  bound of
 ${\mathcal  S}_\nu^{(\lambda)} (\varphi)$,  and characterize its explicit dependence of $\theta$ and  decay rate in $\nu.$
\begin{cor}
	\label{AsympGeg-B}
For  $\lambda>0,$  we have
\begin{equation}\label{AsympGeg-B0}
\Big|(\sin \varphi)^{\lambda}\,{}^{r\!}G_\nu^{(\lambda)}(\cos \varphi)-
	\frac{2^\lambda\Gamma(\lambda+1/2)}{\sqrt{\pi} {(\nu+\lambda)^{\lambda}}} \, {\cos ((\nu+\lambda)\varphi- \lambda\pi/2)} \Big|\le
	\frac{B^{(\lambda)}_\nu}
	{\nu^{\lambda+1}\sin \varphi}\,,
\end{equation}
where  the constant $B^{(\lambda)}_\nu$ is  given by
	\begin{itemize}
		\item[(i)] for $0<\lambda\le 2$ and $\nu+\lambda>1,$ %we have 	
		\begin{equation}\label{Bnuv1}
		{B}_\nu^{(\lambda)}=\frac{\lambda\, |\lambda-1|\,2^\lambda\,\Gamma(\lambda+1/2)}{\sqrt{\pi}}\,
\frac{3\nu+5\lambda-1}{3(\nu+\lambda-1)}\,
		{\rm exp}\Big( \frac{1-\lambda^2} {\nu+\lambda-1} \Big),
		\end{equation}
and the bound \eqref{AsympGeg-B0} holds for all $\theta\in (0,\pi);$
\vskip 4pt
		\item[(ii)] for $\lambda>  2$ and $\nu>\lambda-3,$  we have
				\begin{equation}\label{Bnuv2}
				\begin{split}	
	{B}_\nu^{(\lambda)}&=
	\frac{\lambda(\lambda-1)2^{3 \lambda/2}\Gamma(\lambda+1/2)}{3\sqrt{\pi}}\bigg\{\frac{3\nu+2\lambda+5}{\nu+1}\\
	&\quad +( c\pi)^{\lambda-2}\, \frac{ \Gamma(2\lambda-1)}{\Gamma(\lambda+1)}\, \frac{3\nu+\lambda +7}{\nu-\lambda+3}\, {\rm exp}\Big(\frac{(2\lambda-5)(\lambda+1)}{\nu-\lambda+3}\Big)\bigg\},
	\end{split}\end{equation}
	and  the bound \eqref{AsympGeg-B0} holds for all   $\varphi\in [c\nu^{-1},\pi-c\nu^{-1}]$ with $c$ being a fixed positive constant.
	\end{itemize}
\end{cor}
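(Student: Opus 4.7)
The plan is to deduce Corollary~\ref{AsympGeg-B} directly from Theorem~\ref{AsympGeg-A}, which supplies the pointwise bound $|{\mathcal R}_\nu^{(\lambda)}(\varphi)| \le {\mathcal S}_\nu^{(\lambda)}(\varphi)$ with the explicit expressions \eqref{BoundIntFunft-A}--\eqref{BoundIntFunft-B} for ${\mathcal S}_\nu^{(\lambda)}(\varphi)$. The task then reduces to a chain of elementary estimates designed to (a) extract the factor $1/\sin\varphi$ from the braces of ${\mathcal S}_\nu^{(\lambda)}$, and (b) convert the various ``shifted'' powers $(\nu+\lambda-1)^{\lambda+1}$, $(\nu+1)^{\lambda+1}$ and $(\nu-\lambda+3)^{2\lambda-1}$ into $\nu^{\lambda+1}$, at the cost of the exponential correction factors that appear in \eqref{Bnuv1}--\eqref{Bnuv2}. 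No new integral representations are needed beyond what Theorem~\ref{AsympGeg-A} already provides.

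The first step would be to use the trivial inequalities $|\cot\varphi|\le 1/\sin\varphi$ and $\sin\varphi\le 1$ to rewrite every group of the form $|\cot\varphi|+B$ in ${\mathcal S}_\nu^{(\lambda)}$ as
\[
|\cot\varphi|+B \;\le\; \frac{1}{\sin\varphi}+B \;\le\; \frac{1+B}{\sin\varphi}.
\]
In case~(i), taking $B=2(\lambda+1)/(3(\nu+\lambda-1))$ collapses the brace of \eqref{BoundIntFunft-A} into $(3\nu+5\lambda-1)/(3(\nu+\lambda-1))$, which is exactly the non-exponential factor in \eqref{Bnuv1}. In case~(ii), the same move applied twice to the two inner brackets of \eqref{BoundIntFunft-B} produces the fractions $(3\nu+2\lambda+5)/(3(\nu+1))$ and $(3\nu+\lambda+7)/(3(\nu-\lambda+3))$ that appear in \eqref{Bnuv2}.

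The second step is to absorb $(\nu+\lambda-1)^{-(\lambda+1)}$ in case~(i), and $(\nu+1)^{-(\lambda+1)}$, $(\nu-\lambda+3)^{-(2\lambda-1)}$ in case~(ii), into $\nu^{-(\lambda+1)}$, via the one-sided inequality $(1+x)^r\le e^{rx}$ (valid whenever $1+x>0$). In case~(i) this gives
\[
\Bigl(\frac{\nu}{\nu+\lambda-1}\Bigr)^{\lambda+1} = \Bigl(1+\frac{1-\lambda}{\nu+\lambda-1}\Bigr)^{\lambda+1} \le \exp\Bigl(\frac{1-\lambda^2}{\nu+\lambda-1}\Bigr),
\]
reproducing exactly the exponential in \eqref{Bnuv1}, and the standing hypothesis $\nu+\lambda>1$ guarantees $1+x>0$. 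An analogous manipulation on a ratio of the shape $\big((\nu+\lambda-2)/(\nu-\lambda+3)\big)^{\lambda+1}$ in case~(ii) produces the factor $\exp((2\lambda-5)(\lambda+1)/(\nu-\lambda+3))$ of \eqref{Bnuv2}, and the hypothesis $\nu>\lambda-3$ again ensures $1+x>0$.

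The last ingredient, specific to case~(ii), is a bound on $|\cot\varphi|^{\lambda-2}$, which becomes unbounded at the endpoints of $(0,\pi)$ when $\lambda>2$. This is where the restriction $\varphi\in[c\nu^{-1},\pi-c\nu^{-1}]$ enters: using the monotonicity of $\cot$ on each half of $(0,\pi)$ together with Jordan's inequality $\sin x\ge 2x/\pi$ on $(0,\pi/2]$, one has
\[
|\cot\varphi| \;\le\; \cot(c/\nu) \;\le\; \frac{1}{\sin(c/\nu)} \;\le\; \frac{\pi\nu}{2c},
\]
so that $|\cot\varphi|^{\lambda-2}$ contributes a factor proportional to $\nu^{\lambda-2}$ with a $(c\pi)^{\lambda-2}$-type prefactor, which then combines with the $\nu^{-(\lambda+1)}$-conversion above to give the second summand of \eqref{Bnuv2}. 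The main obstacle I anticipate is the bookkeeping in case~(ii): three independent shifted $\nu$-factors together with the $|\cot\varphi|^{\lambda-2}$-estimate must be combined in just the right order, each invocation of $(1+x)^r\le e^{rx}$ must be validated against the parameter range, and the numerical constants ($2^{3\lambda/2}$, $2^{2-\lambda}$, $\Gamma(2\lambda-1)/\Gamma(\lambda+1)$) have to be tracked through the reduction to match the stated $B_\nu^{(\lambda)}$. Apart from this accounting, every individual estimate is elementary.
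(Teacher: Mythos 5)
Your proposal is correct and follows essentially the same route as the paper: the corollary is exactly the bound $|{\mathcal R}_\nu^{(\lambda)}(\varphi)|\le {\mathcal S}_\nu^{(\lambda)}(\varphi)$ from Theorem \ref{AsympGeg-A}, post-processed by (a) multiplying through by $\sin\varphi$ so that $|\cot\varphi|$ becomes $|\cos\varphi|\le 1$, (b) converting the shifted powers to $\nu^{\lambda+1}$ via $\ln(1+z)\le z$ (your $(1+x)^r\le e^{rx}$), and (c) controlling $|\cot\varphi|^{\lambda-2}$ on $[c\nu^{-1},\pi-c\nu^{-1}]$ with Jordan's inequality, which is precisely the paper's argument in \eqref{AsympGeg-B3}--\eqref{AsympGeg-B13}. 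The only caveat is cosmetic: your Jordan-inequality step yields a prefactor of the form $(\pi/(2c))^{\lambda-2}$ rather than the paper's $(c\pi)^{\lambda-2}$, so the constant you obtain in case (ii) will differ slightly from \eqref{Bnuv2} in its dependence on $c$, a bookkeeping discrepancy you already anticipated.
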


We provide the derivation of the above bounds right after the proof of Theorem  \ref{AsympGeg-A}.  Note that in the second case: $\lambda>2,$ the bound is only available  for   $\varphi\in [c\nu^{-1},\pi-c\nu^{-1}]$ with some fixed constant $c>0$.
 Indeed,  the situation is reminiscent to  the classical Gengenbauer polyomial with
  asymptotics only valid for $\varphi\in [cn^{-1},\pi-cn^{-1}]$ with large $n,$ as we  remark below.
\begin{rem} \label{remAsymptotic} {\em
% we can
%obtain the   the bound uniform for nearly all   $n.$
From  \eqref{obsvers0} and Theorem {\rm \ref{AsympGeg-A}}, we obtain  that for $\nu=n\in {\mathbb N},$
\begin{equation}\label{remAsymptotic-0}
\begin{split}
& (\sin\theta)^{\lambda}  P_n^{(\lambda-1/2,\lambda-1/2)}(\cos \varphi)=(\sin\theta)^{\lambda}\,P_n^{(\lambda-1/2,\lambda-1/2)}(1)\,G_n^{(\lambda)}(\cos \varphi)\\
%&=\frac{\Gamma(n+\lambda+1/2)}{\Gamma(\lambda+1/2)n!}(\sin\theta)^{\lambda}\, G_n^{(\lambda)}(\cos \varphi)\\
&\quad=\frac{2^\lambda\Gamma(n+\lambda+1/2)}{\sqrt{\pi} \,n!\,{(n+\lambda)^{\lambda}}} \, {\cos ((n+\lambda)\varphi- \lambda\pi/2)} +\frac{\Gamma(n+\lambda+1/2)}{\Gamma(\lambda+1/2)n!}{\mathcal  R}_n^{(\lambda)} (\varphi).
\end{split}
\end{equation}
Then  from  Corollary {\rm \ref{AsympGeg-B}},  we can derive the bounds uniform for $n.$ In fact, we can recover the asymptotic formula for the classical Gegenbauer polynomial with large $n$
%Recall  the  asymptotic formula for the  Gegenbauer polynomials
{\rm(cf.} \cite[Thm 8.21.13]{Szego1975Book}{\rm):}
\begin{equation}\label{remAsymptotic-7}
(\sin\theta)^{\lambda}  P_n^{(\lambda-1/2,\lambda-1/2)}(\cos \varphi)=\frac{2^{\lambda}}{\sqrt{\pi n}}
\big\{\!\cos\big ((n+\lambda)\varphi -  \lambda\pi/2\big) + \big(n\sin\varphi\big)^{-1}O(1)\big\},
\end{equation}
 for all $\lambda>0$ and $\varphi\in [cn^{-1},\pi-cn^{-1}]$ with $n\gg 1$ and $c$ being  a fixed positive constant. Indeed,
using the property of the Gamma function {\rm(cf.} \cite[(6.1.38)]{Abramowitz1972Book}{\rm):}
\begin{equation}\label{PropertyGamma}
\Gamma(x+1)=\sqrt{2\pi}\,
x^{x+1/2}\exp\Big(\!-x+\frac{\eta}{12x}\Big),\quad
x>0,\;\;0<\eta<1,
\end{equation}
and the bounds of ${\mathcal  R}_n^{(\lambda)} (\varphi)$ in Corollary  {\rm \ref{AsympGeg-B}}, we can deduce \eqref{remAsymptotic-7}  straightforwardly. }
\end{rem}

% which indeed  provide more informative  asymptotic results, e.g., for the polynomial cases.
% the important theorem for  asymptotic analysis.
\vskip 5pt
Thanks to Theorem \ref{AsympGeg-A}, we can derive the following uniform bounds for  $\theta\in[0,\pi],$ and nearly all fractional degree $\nu>0.$ We refer to Subsection \ref{proof-Thm2.2} for its proof.
\begin{thm}
	\label{AsympGeg-CaseA}
{\rm (i)}  If  $0<\lambda\le 2,\, \nu+\lambda>1$ and $\nu>0,$ we have
\begin{equation}\label{AsympGeg-cA}
|\widetilde {\mathcal  R}_\nu^{(\lambda)} (\varphi)|=\Big|(\sin \varphi)^{\lambda+1}\,{}^{r\!}G_\nu^{(\lambda)}(\cos \varphi)-
	\frac{2^\lambda\Gamma(\lambda+1/2)}{\sqrt{\pi} {(\nu+\lambda)^{\lambda}}} \, (\sin \theta)\, {\cos ((\nu+\lambda)\varphi- \lambda\pi/2)} \Big|\le
{\widetilde {\mathcal S}^{(\lambda)}_\nu}(\theta),
\end{equation}
for all $\theta\in [0,\pi],$ where ${\widetilde {\mathcal R}^{(\lambda)}_\nu}(\theta)=(\sin \theta)\,   {\mathcal R}_\nu^{(\lambda)} (\varphi)$ and ${\widetilde {\mathcal S}^{(\lambda)}_\nu}(\theta)=(\sin \theta)\,   {\mathcal S}_\nu^{(\lambda)} (\varphi).$
%\begin{equation}\label{Bnuv-cA}
%{B^{(\lambda)}_\nu}(\theta)= \frac{\lambda|\lambda-1|2^\lambda\Gamma(\lambda+1/2)}{\sqrt{\pi}(\nu+\lambda-1)^{\lambda+1}} \bigg\{|\cos \varphi|+\frac 2 3 \frac{\lambda+1} {\nu+\lambda-1}\sin\theta\bigg\}.
%\end{equation}

\medskip
{\rm (ii)}  If $\lambda>  2,\, \nu>\lambda-3$ and $\nu>0,$  we have
\begin{equation}\label{AsympGeg-cB}
|\widetilde {\mathcal  R}_\nu^{(\lambda)} (\varphi)|=\Big|(\sin \varphi)^{2\lambda-1}\,{}^{r\!}G_\nu^{(\lambda)}(\cos \varphi)-
	\frac{2^\lambda\Gamma(\lambda+1/2)}{\sqrt{\pi} {(\nu+\lambda)^{\lambda}}} \, (\sin \theta)^{\lambda-1} {\cos ((\nu+\lambda)\varphi- \lambda\pi/2)} \Big| \le
{\widetilde {\mathcal S}^{(\lambda)}_\nu}(\theta),
\end{equation}
for all $\theta\in [0,\pi],$ where ${\widetilde {\mathcal R}^{(\lambda)}_\nu}(\theta)=(\sin \theta)^{\lambda-1}\,   {\mathcal R}_\nu^{(\lambda)} (\varphi)$ and  ${\widetilde {\mathcal S}^{(\lambda)}_\nu}(\theta)=(\sin \theta)^{\lambda-1}\,   {\mathcal S}_\nu^{(\lambda)} (\varphi).$
\end{thm}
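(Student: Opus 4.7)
The plan is to derive Theorem~\ref{AsympGeg-CaseA} as a direct consequence of Theorem~\ref{AsympGeg-A} by multiplying both sides of the representation \eqref{IntRep-0} by a carefully selected power of $\sin\theta$. This power is chosen precisely to absorb the $|\cot\varphi|$-type singularities at $\theta=0,\pi$ present in $\mathcal{S}_\nu^{(\lambda)}(\varphi)$, upgrading the open-interval estimate \eqref{uniformBndR} to one that is uniform on the closed interval $[0,\pi]$.

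In case (i), with $0<\lambda\le 2$, I would multiply \eqref{IntRep-0} through by $\sin\theta$. The resulting identity has left-hand side $(\sin\varphi)^{\lambda+1}\,{}^{r\!}G_\nu^{(\lambda)}(\cos\varphi)$, the main term picks up an extra factor of $\sin\theta$ exactly as displayed in \eqref{AsympGeg-cA}, and the residual becomes $\widetilde{\mathcal{R}}_\nu^{(\lambda)}(\theta)=\sin\theta\cdot\mathcal{R}_\nu^{(\lambda)}(\varphi)$. The pointwise bound \eqref{uniformBndR} then yields $|\widetilde{\mathcal{R}}_\nu^{(\lambda)}(\theta)|\le \widetilde{\mathcal{S}}_\nu^{(\lambda)}(\theta)$ on the open interval $(0,\pi)$. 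In case (ii), with $\lambda>2$, I would instead multiply by $(\sin\theta)^{\lambda-1}$, which produces the form in \eqref{AsympGeg-cB} and the corresponding inequality on $(0,\pi)$ by the same mechanism.

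The substantive step is to extend these inequalities from $(0,\pi)$ to $[0,\pi]$ by a continuity argument. The whole point of the weighting is that $\widetilde{\mathcal{S}}_\nu^{(\lambda)}$ is continuous on $[0,\pi]$ even though $\mathcal{S}_\nu^{(\lambda)}$ is not. For case (i), the key identity is $\sin\theta\,|\cot\theta|=|\cos\theta|\le 1$, and the remaining constant piece in \eqref{BoundIntFunft-A} is tempered by $\sin\theta$, which vanishes at the endpoints. For case (ii), the two potentially singular factors in \eqref{BoundIntFunft-B} satisfy $(\sin\theta)^{\lambda-1}|\cot\theta|=(\sin\theta)^{\lambda-2}|\cos\theta|$ and $(\sin\theta)^{\lambda-1}|\cot\theta|^{\lambda-2}=\sin\theta\cdot |\cos\theta|^{\lambda-2}$, both of which are continuous and bounded on $[0,\pi]$ since $\lambda>2$. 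Simultaneously, $\widetilde{\mathcal{R}}_\nu^{(\lambda)}$ is continuous on $[0,\pi]$: the chosen power of $\sin\theta$ dominates the endpoint behaviour of ${}^{r\!}G_\nu^{(\lambda)}(\cos\theta)$, which is analytic at $\theta=0$ and behaves like $(1+\cos\theta)^{1/2-\lambda}$ at $\theta=\pi$ for $\lambda>1/2$, while the main trigonometric term is evidently continuous.

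The main obstacle, if it can be called that, is really just verifying the sharp matching between the chosen weight and the singularity structure of $\mathcal{S}_\nu^{(\lambda)}$, so that no stronger weight is introduced (which would weaken the bound unnecessarily). This matching is built into the explicit forms \eqref{BoundIntFunft-A} and \eqref{BoundIntFunft-B}, making the extension essentially algebraic once the correct power of $\sin\theta$ is identified for each regime of $\lambda$.
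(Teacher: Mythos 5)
Your proposal is correct and follows essentially the same route as the paper: multiply the identity \eqref{IntRep-0} and the bound \eqref{uniformBndR} by $\sin\theta$ (resp.\ $(\sin\theta)^{\lambda-1}$) to obtain the inequality on $(0,\pi)$, and then pass to the endpoints using the known behaviour of ${}^{r\!}G_\nu^{(\lambda)}$ at $x=\pm1$. The only divergence is in the treatment of $\theta=\pi$ for $\lambda\ge 2$, where you argue that the non-strict inequality survives in the limit because both sides extend continuously (note the weight does not ``dominate'' the singularity there but exactly matches it, giving the finite nonzero limit $2^{2\lambda-1}\mathcal{Q}_\nu^{(\lambda)}$), whereas the paper computes both endpoint limits explicitly via \eqref{defBS0B} and \eqref{BoundIntFunft-B} and verifies the resulting inequality with a Gamma-ratio estimate; your shortcut is logically valid provided the existence of the limit of the weighted GGF-F at $x=-1$ is justified, which is precisely the content of \eqref{defBS00}--\eqref{defBS0B} that you cite.
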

%\begin{proof}
%For $\theta\in (0,\pi),$ we can derive
%the above bounds from \eqref{uniformBndR} by multiplying $\sin\theta$ and $(\sin \theta)^{\lambda-1},$ respectively for two cases. In view of the behavior of the GGF-Fs
%at $x=\pm 1$ (by the property of involved  hypergeometric functions, see Subsection \ref{subsect3}),
%the above estimates are valid for $\theta= 0,\pi.$
%\end{proof}

\begin{figure}[!th]
	\begin{center}
		%{~}\hspace*{-20pt}	
\includegraphics[width=0.41\textwidth,height=0.3\textwidth]{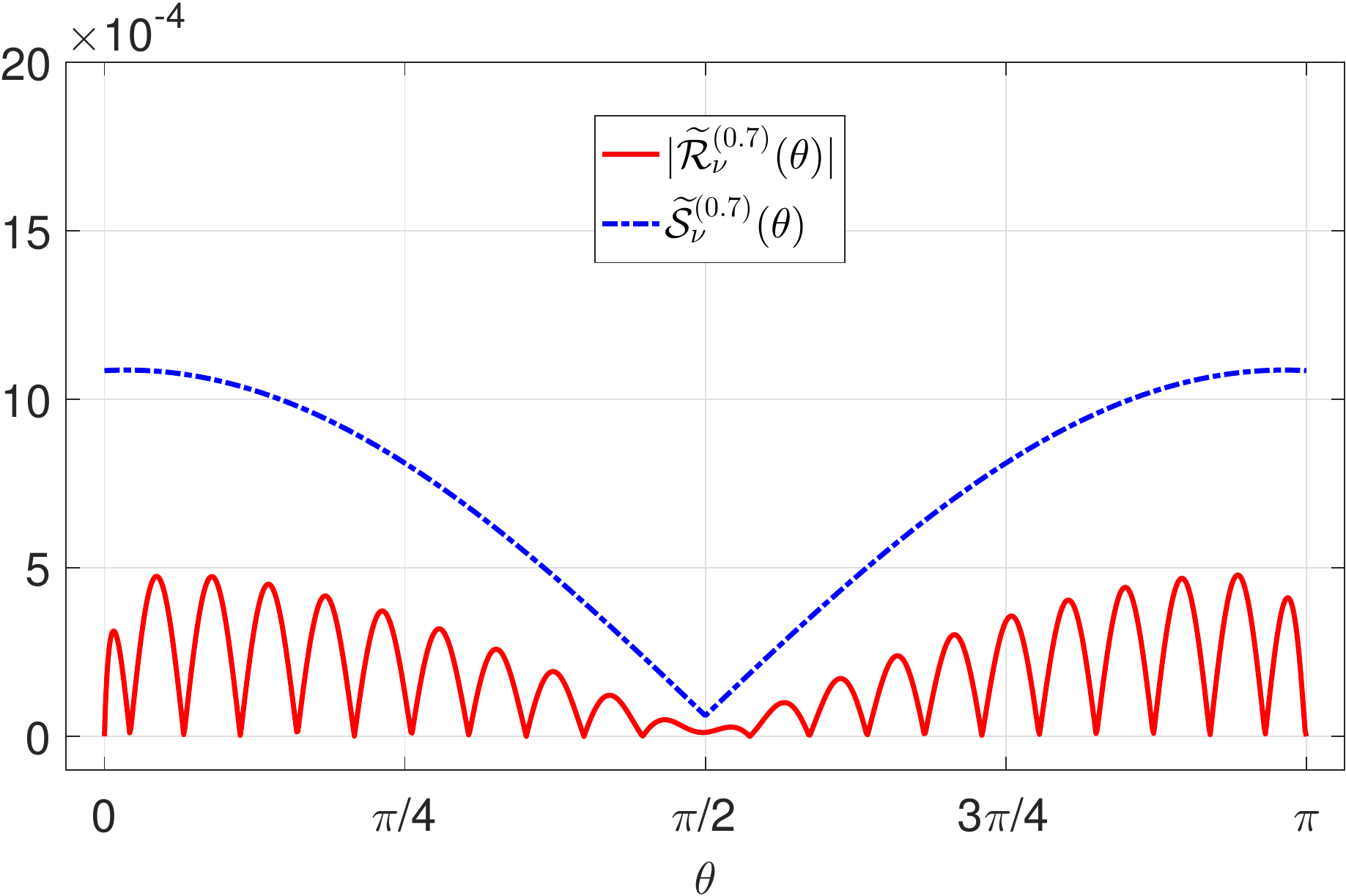}\qquad
		\includegraphics[width=0.41\textwidth,height=0.3\textwidth]{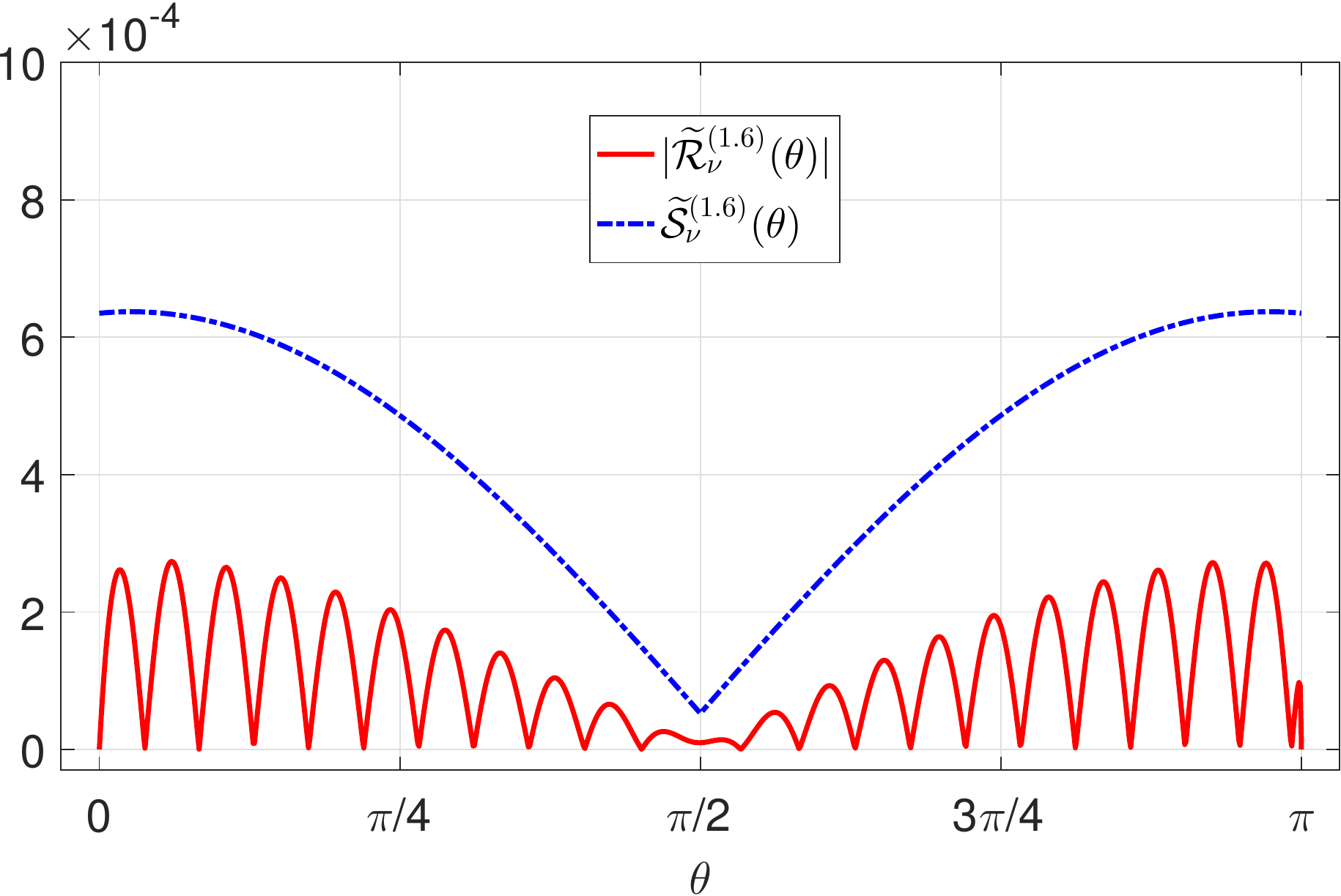}\\
\includegraphics[width=0.41\textwidth,height=0.3\textwidth]{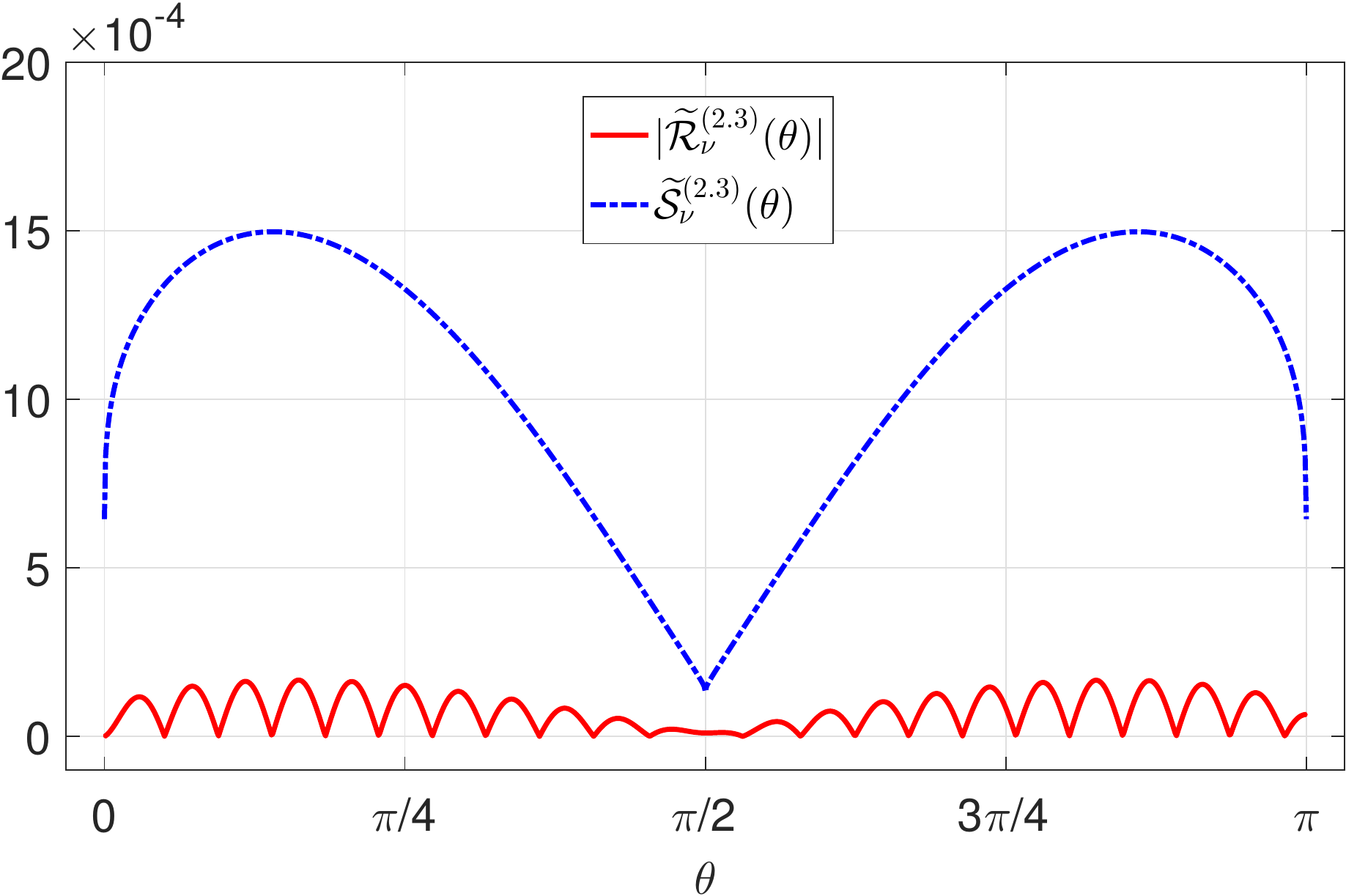}\qquad
		\includegraphics[width=0.41\textwidth,height=0.3\textwidth]{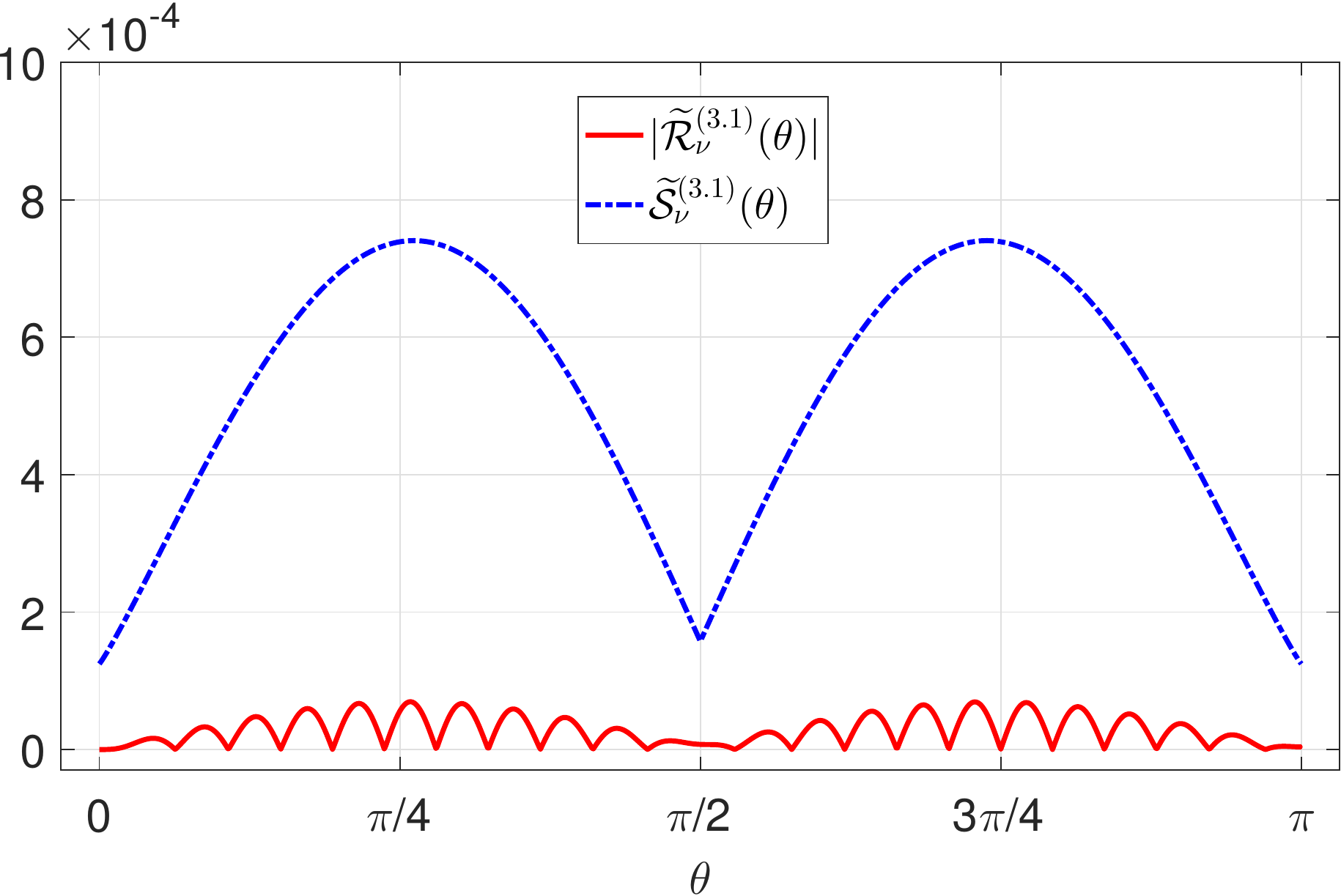}
		\caption{Plots of $|\widetilde {\mathcal  R}_\nu^{(\lambda)} (\varphi)|$  and $\widetilde {\mathcal  S}_\nu^{(\lambda)} (\varphi)$ in Theorem \ref{AsympGeg-CaseA}, where $\theta\in [0,\pi], \lambda=0.7,1.6, 2.3, 3.1$ and  $\nu=20.3$.}
		\label{FigForBound}
	\end{center}
\end{figure}

%\subsection{Numerical illustrations}
In the end of this section, we provide some numerical illustrations of the unform bounds in
Theorem \ref{AsympGeg-CaseA}. In  Figure \ref{FigForBound},
we plot the graphs of
$|\widetilde {\mathcal  R}_\nu^{(\lambda)} (\varphi)|$  and $\widetilde {\mathcal  S}_\nu^{(\lambda)} (\varphi)$  for $\theta\in [0,\pi]$ and with $\lambda=0.7,1.6, 2.3, 3.1,$  $\nu=20.3$. Indeed, we observe that in all cases, the curves of the upper bounds are on the top of $|\widetilde {\mathcal  R}_\nu^{(\lambda)} (\varphi)|,$ and ``sharp" corner of $\widetilde {\mathcal  S}_\nu^{(\lambda)} (\varphi)$ at $\theta=\pi/2$ is largely due to the involved $|\cos\theta|.$

\section{Proof of the  results}\label{sect:proofs}
\setcounter{equation}{0}
\setcounter{lmm}{0}
\setcounter{thm}{0}

\subsection{Two lemmas} As the proof of the main result is quite involved, we   take several steps and summarise the intermediate results into two lemmas.

\begin{lmm} \label{UperBoundf} For  real $\lambda>0,$ $\varphi\in (0,\pi)$  and  $t>0,$  define
\begin{equation}\label{gttf}
\begin{split}
& g(\varphi, t):= \frac{\cos(\varphi-\ri t) - \cos\varphi}{t}= \frac{\cos\varphi\, (\cosh t-1)+ \ri \sin \varphi\, \sinh t}{t},\\
%\end{equation}
%and 	
%\begin{equation}\label{fdefnd}
%\begin{split}
& f^{(\lambda)}(\varphi, t):=\frac{g^{\lambda-1}(\varphi, t)-	g^{\lambda-1}(\varphi, 0)} t,\quad g(\varphi, 0):=\lim_{t\to 0^+} g(\varphi, t)=\ri \sin \varphi.
\end{split}
\end{equation}
Then we have for $\varphi\in (0,\pi)$ and $t>0,$
\begin{itemize}
	\item[(i)] for $0<\lambda\le 2,$
	\begin{equation}\label{ftbnds}
	|f^{(\lambda)}(\varphi,t)| \le |\lambda-1| \, (\sin \varphi)^{\lambda-1} \,  \Big( |\cot \varphi| +\frac {2t} 3 \Big) e^t;
	\end{equation}
	\item[(ii)] for $\lambda> 2,$
	\begin{equation}\label{ftbndsB}
	|f^{(\lambda)}(\varphi,t)| \le  2^{\lambda/2}\, (\lambda-1)  \,  (\sin \varphi)^{\lambda-1} \,  \Big( |\cot \varphi| +\frac {2t} 3 \Big) \Big(1+
	\frac{|\cot \varphi|^{\lambda-2}} {2^{\lambda-2}}\, t^{\lambda-2} e^{(\lambda-2)t}\Big) e^{(\lambda-1)t}.
	\end{equation}
\end{itemize}
\end{lmm}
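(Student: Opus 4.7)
The strategy is to apply a complex mean-value representation to $g^{\lambda-1}-g_0^{\lambda-1}$ and then bound the two resulting factors separately. Since $\sin\varphi>0$ for $\varphi\in(0,\pi)$, both $g_0=\ri\sin\varphi$ and $g(\varphi,t)$ lie in the open upper half-plane (one checks $\IM g(\varphi,t)=\sin\varphi\,\sinh t/t>0$), and hence the entire segment $(1-s)g_0+sg(\varphi,t)$ for $s\in[0,1]$ does too. On this simply connected region the principal branch of $z\mapsto z^{\lambda-1}$ is holomorphic, and applying the fundamental theorem of calculus along this segment gives
\begin{equation*}
f^{(\lambda)}(\varphi,t)=(\lambda-1)\,\frac{g(\varphi,t)-g_0}{t}\int_0^1\big((1-s)g_0+sg(\varphi,t)\big)^{\lambda-2}\,ds.
\end{equation*}

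I would next estimate the prefactor $|g-g_0|/t$. From the explicit decomposition $g-g_0=\cos\varphi\,(\cosh t-1)/t+\ri\sin\varphi\,(\sinh t-t)/t$, together with the elementary inequalities $\cosh t-1\le\tfrac12 t\sinh t$ and $\sinh t-t\le\tfrac16 t^2\sinh t$ (each obtained by verifying that the claimed difference has nonnegative derivative and vanishes at $t=0$, using $\tanh t\le t$) and $\sinh t\le t\,e^t$, one derives $|g-g_0|/t\le\sin\varphi\,(|\cot\varphi|+2t/3)\,e^t$. This already extracts the common trigonometric prefactor appearing on the right of both \eqref{ftbnds} and \eqref{ftbndsB} (the remaining $(\sin\varphi)^{\lambda-2}$ will come from the $s$-integral below).

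The two cases then split according to the sign of $\lambda-2$. For case (i), $0<\lambda\le 2$, the exponent is nonpositive, so I need a lower bound on $|(1-s)g_0+sg|$; but $\IM((1-s)g_0+sg)=\sin\varphi\,((1-s)+s\sinh t/t)\ge\sin\varphi$ because $\sinh t\ge t$, which yields $|(1-s)g_0+sg|^{\lambda-2}\le(\sin\varphi)^{\lambda-2}$ uniformly in $s$. Inserting this together with the prefactor bound into the mean-value formula yields \eqref{ftbnds} at once. For case (ii), $\lambda>2$, I need an upper bound: applying $|a+\ri b|\le\sqrt 2\max(|a|,|b|)$ and $\max(|a|,|b|)^{\lambda-2}\le|a|^{\lambda-2}+|b|^{\lambda-2}$ to the real and imaginary parts, combined with $(\cosh t-1)/t\le t e^t/2$ and $(1-s)+s\sinh t/t\le e^t$, bounds the integrand by $2^{(\lambda-2)/2}e^{(\lambda-2)t}\{(\sin\varphi)^{\lambda-2}+(st|\cos\varphi|/2)^{\lambda-2}\}$. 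Evaluating $\int_0^1 s^{\lambda-2}\,ds=1/(\lambda-1)$ in the second term and recombining with the prefactor produces \eqref{ftbndsB} (with a little slack absorbed into the constant $2^{\lambda/2}$ and the exponential $e^{(\lambda-2)t}$ in the second summand).

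The main conceptual subtlety, and essentially the only step that is not bookkeeping, is the branch choice for $z^{\lambda-1}$: once one confirms that the linear segment from $g_0$ to $g(\varphi,t)$ lies in the open upper half-plane, the rest of the argument reduces to careful tracking of constants through standard convexity inequalities for $\sinh$ and $\cosh$, and no obstacles involving cancellation or oscillation need to be overcome.
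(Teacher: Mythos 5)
Your proof is correct, but it takes a genuinely different route from the paper's. The paper writes $f^{(\lambda)}(\varphi,t)$ as a difference quotient of $t\mapsto g^{\lambda-1}(\varphi,t)$ and applies the \emph{real} mean-value theorem separately to its real and imaginary parts, obtaining
$|f^{(\lambda)}(\varphi,t)|\le 2\,|\lambda-1|\sup_{0<\xi<t}\{|g(\varphi,\xi)|^{\lambda-2}|\partial_t g(\varphi,\xi)|\}$;
this forces it to prove a derivative bound $|\partial_t g|\le(\tfrac{t}{3}\sin\varphi+\tfrac12|\cos\varphi|)\cosh t$ and, more delicately, \emph{two-sided} bounds on $|g(\varphi,\xi)|$ along the curve $\xi\mapsto g(\varphi,\xi)$ (the lower bound being needed for $0<\lambda\le 2$ and the upper bound for $\lambda>2$). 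You instead integrate $\frac{d}{dz}z^{\lambda-1}$ along the straight segment from $g(\varphi,0)=\ri\sin\varphi$ to $g(\varphi,t)$, which is legitimate because $\IM g(\varphi,t)=\sin\varphi\,\sinh t/t>0$ keeps the segment in the upper half-plane where the principal branch is holomorphic. This buys you three things: the representation is an exact identity, so the factor $2$ from the separate real/imaginary mean-value arguments disappears; the lower bound $|(1-s)g_0+sg|\ge \IM((1-s)g_0+sg)\ge\sin\varphi$ needed in case (i) is immediate from convexity and $\sinh t\ge t$, replacing the paper's two-sided estimate of $|g|^2$; and in case (ii) the explicit factor $s^{\lambda-2}$ in the integrand even yields an extra $1/(\lambda-1)$. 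Your hyperbolic inequalities ($\cosh t-1\le\tfrac12 t\sinh t$, $\sinh t-t\le\tfrac16 t^2\sinh t$, both via $\tanh t\le t$) play the same role as the paper's \eqref{Bound-A}--\eqref{Bound-B} but applied to the difference quotient $(g-g_0)/t$ rather than to $\partial_t g$. The resulting constants are slightly sharper than the stated ones ($2^{(\lambda-2)/2}$ in place of $2^{\lambda/2}$ in case (ii)), so both \eqref{ftbnds} and \eqref{ftbndsB} follow with room to spare.
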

To avoid distracting from proving the main result, we put this a bit lengthy  proof but only involving fundamental calculus  in Appendix \ref{AppendixA}.

A critical step is to show that the integral in  \eqref{IntRep}  satisfies the following identity.
\begin{lemma}\label{indtity} For real $\nu,\lambda\ge 0,$ and $\theta\in (0,\pi),$ we have
\begin{equation}\label{mainStep1}
\int^\varphi_0\frac{\cos((\nu+\lambda)\vartheta)}{(\cos \vartheta-\cos\varphi)^{1-\lambda}}\,d\vartheta=\frac{\Gamma(\lambda)} {(\nu+\lambda)^{\lambda}}\,
\frac{\cos ((\nu+\lambda)\varphi- \lambda\pi/2)}{	(\sin \varphi)^{1-\lambda}} +\breve{\mathcal  R}_\nu^{(\lambda)} (\varphi),
\end{equation}
where
\begin{equation}\label{ResidualDefn}
\begin{split}
& \breve {\mathcal  R}_\nu^{(\lambda)} (\varphi):= \int^{ \infty}_0 \RE\big\{\ri\, e^{-\ri (\nu+\lambda)\varphi}	f^{(\lambda)}(\varphi, t)\big\}t^{\lambda}e^{-(\nu+\lambda) t}dt,
%\\& f^{(\lambda)}(\varphi, t):=\frac{g^{\lambda-1}(\varphi, t)-	g^{\lambda-1}(\varphi, 0)} t,\quad g(\varphi, 0):=\lim_{t\to 0^+} g(\varphi, t)=\ri \sin \varphi,
\end{split}
\end{equation}
and $f^{(\lambda)}(\varphi, t)$ is defined in   \eqref{gttf}.
\end{lemma}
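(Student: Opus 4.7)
The plan is to reinterpret the left-hand side of \eqref{mainStep1} as the real part of a complex contour integral and deform the contour into the lower half plane, where the oscillatory factor $e^{-\ri(\nu+\lambda)z}$ becomes exponentially decaying. Writing $\cos((\nu+\lambda)\vartheta) = \RE\{e^{-\ri(\nu+\lambda)\vartheta}\}$ and introducing
\[
F(z) := e^{-\ri(\nu+\lambda)z}\,(\cos z - \cos \varphi)^{\lambda-1},
\]
I would fix the branch of $(\cos z - \cos\varphi)^{\lambda-1}$ so that it is positive on $(0,\varphi)$ and analytic in the open rectangle $\Omega := \{z : 0 < \RE z < \varphi,\; \IM z < 0\}$, by cutting from the endpoint singularity $z=\varphi$ upward into $\IM z>0$, outside $\overline\Omega$. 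A short check shows that in $\Omega$ the equation $\cos z=\cos\varphi$ has no solution, since for $z=a+\ri b$ with $a\in(0,\varphi)$ and $b<0$ we would need $\sin a\sinh b=0$, which fails.

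Next, I would apply Cauchy's theorem to the counterclockwise rectangular contour with corners $0,\;-\ri R,\;\varphi-\ri R,\;\varphi$. On the bottom side $z=s-\ri R$, the bound $|e^{-\ri(\nu+\lambda)z}|=e^{-(\nu+\lambda)R}$ combined with $|\cos z-\cos\varphi|^{\lambda-1}=O(e^{(\lambda-1)R})$ gives decay like $e^{-(\nu+1)R}$, so this segment vanishes as $R\to\infty$. On the left side $z=-\ri t$, one has $\cos z-\cos\varphi=\cosh t-\cos\varphi>0$ real and positive, yielding the purely imaginary contribution $-\ri\int_0^\infty e^{-(\nu+\lambda)t}(\cosh t-\cos\varphi)^{\lambda-1}\,dt$, which drops out upon taking the real part. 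The right side $z=\varphi-\ri t$ is the essential one: by the very definition \eqref{gttf} of $g$, we have $\cos z-\cos\varphi = t\,g(\varphi,t)$ and $e^{-\ri(\nu+\lambda)z} = e^{-\ri(\nu+\lambda)\varphi}e^{-(\nu+\lambda)t}$, so Cauchy's theorem produces
\[
\int_0^\varphi \frac{\cos((\nu+\lambda)\vartheta)}{(\cos\vartheta-\cos\varphi)^{1-\lambda}}\,d\vartheta = \RE\left\{\ri\,e^{-\ri(\nu+\lambda)\varphi}\int_0^\infty t^{\lambda-1}g(\varphi,t)^{\lambda-1}e^{-(\nu+\lambda)t}\,dt\right\}.
\]

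The main-plus-residual split then follows from the decomposition $g(\varphi,t)^{\lambda-1}=g(\varphi,0)^{\lambda-1}+t\,f^{(\lambda)}(\varphi,t)$ dictated by the definition of $f^{(\lambda)}$. The constant piece, using $\int_0^\infty t^{\lambda-1}e^{-(\nu+\lambda)t}\,dt=\Gamma(\lambda)/(\nu+\lambda)^\lambda$ together with $g(\varphi,0)^{\lambda-1}=(\ri\sin\varphi)^{\lambda-1}=e^{\ri(\lambda-1)\pi/2}(\sin\varphi)^{\lambda-1}$, gives
\[
\RE\{\ri\,e^{-\ri(\nu+\lambda)\varphi}(\ri\sin\varphi)^{\lambda-1}\}\cdot\frac{\Gamma(\lambda)}{(\nu+\lambda)^\lambda} = \frac{\Gamma(\lambda)}{(\nu+\lambda)^\lambda}\,\frac{\cos((\nu+\lambda)\varphi-\lambda\pi/2)}{(\sin\varphi)^{1-\lambda}},
\]
exactly the main term on the right of \eqref{mainStep1}. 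The remaining piece, after the real part is taken, matches the integrand in \eqref{ResidualDefn} verbatim, yielding $\breve{\mathcal R}_\nu^{(\lambda)}(\varphi)$.

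The main obstacle will be the careful treatment of the branch of $(\cos z-\cos\varphi)^{\lambda-1}$ for non-integer $\lambda$ as the contour is bent around the endpoint singularity at $z=\varphi$. Specifically, one must verify that the formula $(\cos(\varphi-\ri t)-\cos\varphi)^{\lambda-1}=t^{\lambda-1}g(\varphi,t)^{\lambda-1}$ is genuinely the analytic continuation of the positive real branch from $(0,\varphi)$, by tracking the argument of $\cos z-\cos\varphi$ along a small quarter-circle around $\varphi$ in $\overline\Omega$; this is where the particular value $g(\varphi,0)=\ri\sin\varphi$ forces the phase $e^{\ri(\lambda-1)\pi/2}$ that ultimately produces $-\lambda\pi/2$ inside the cosine. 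Once this branch identification, the integrability $t^{\lambda-1}\in L^1_{\text{loc}}(0,\infty)$ at $t\to 0^+$ (requiring $\lambda>0$), and the $R\to\infty$ decay on the bottom side (which needs $\nu>-1$) are in hand, the remainder of the argument is bookkeeping.
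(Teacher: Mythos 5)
Your argument is correct, and it rests on the same two pillars as the paper's proof: a rotation of the contour into the half-plane where the exponential factor decays, followed by the split $g^{\lambda-1}(\varphi,t)=g^{\lambda-1}(\varphi,0)+t\,f^{(\lambda)}(\varphi,t)$ from \eqref{gttf} and the Gamma-integral evaluation of the leading piece. The one genuine difference is the choice of contour. The paper first symmetrizes by parity, writing $\int_0^\varphi=\tfrac12\int_{-\varphi}^{\varphi}$ with integrand $e^{\ri(\nu+\lambda)\vartheta}(\cos\vartheta-\cos\varphi)^{\lambda-1}$, and deforms upward over $[-\varphi,\varphi]\times[0,R]$; the two vertical edges at $\RE z=\pm\varphi$ are complex conjugates and their sum produces the $2\RE\{\cdots\}$ automatically. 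You instead take real parts at the outset, keep the one-sided interval $(0,\varphi)$, and deform downward; your left edge at $\RE z=0$ yields a purely imaginary integral (since $\cosh t-\cos\varphi>0$) that the real part discards, and only the edge $\RE z=\varphi$ survives. This buys a small simplification: for $0<\lambda<1$ there is a single singular corner at $z=\varphi$ to indent around, with contribution $O(\epsilon^{\lambda})$, whereas the paper must treat both $\pm\varphi$ (its terms $I_3,I_4$). Two points you flag but do not carry out are genuinely needed and should be recorded: (a) for $0<\lambda<1$ the bottom-edge estimate requires the \emph{lower} bound $|\cos z-\cos\varphi|\ge \sin(\RE z)\sinh R$ rather than the upper bound you quote, since the exponent $\lambda-1$ is then negative (this is the analogue of the paper's \eqref{IntRep-3baba}, and the resulting $(\sin s)^{\lambda-1}$ is integrable over $s\in(0,\varphi)$); and (b) the branch identification is settled most cleanly by noting that $\IM(\cos z-\cos\varphi)=-\sin(\RE z)\sinh(\IM z)>0$ throughout your open rectangle, so $w=\cos z-\cos\varphi$ maps it into the upper half-plane and the principal power $w^{\lambda-1}$ is the analytic continuation of the positive branch from $(0,\varphi)$, giving $(\ri\sin\varphi)^{\lambda-1}=e^{\ri(\lambda-1)\pi/2}(\sin\varphi)^{\lambda-1}$ as $t\to0^+$ exactly as you use. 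With these filled in, your computation reproduces \eqref{mainStep1}--\eqref{ResidualDefn} verbatim.
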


\begin{proof} It  is evident that  by the parity, we have
\begin{equation}\label{residual}
\int^\varphi_0\frac{\cos((\nu+\lambda)\vartheta)}{(\cos \vartheta-\cos\varphi)^{1-\lambda}}\,d\vartheta=\frac 1 2 \int^{\varphi}_{-\varphi}
 F_\nu^{(\lambda)} (\varphi, \vartheta)\, d\vartheta\,,
\end{equation}
where we denote
\begin{equation}\label{newfuncs}
 F_\nu^{(\lambda)} (\varphi, \vartheta):=
\frac {e^{\ri (\nu+\lambda)\vartheta}} {(\cos \vartheta-\cos\varphi)^{1-\lambda}}\,.
\end{equation}

\vskip 5pt
We consider the cases with $\lambda\ge 1$ and $0<\lambda<1,$ separately.
\vskip 4pt
\noindent\underline{{\bf (i)}  Proof of  \eqref{mainStep1} with   $\lambda\ge 1$.}~
% Let ${\mathcal C}$ be a rectangular contour formed by a rectangle  with four  vertices  $A,B,C,D$ as depicted in Figure \ref{FigForContour} (left).
From the Cauchy-Goursat theorem, we infer that  for  any fixed $\varphi\in (0,\pi)$ and real $\nu> 0,$  the contour integration of  $F_\nu^{(\lambda)} (\varphi, \cdot)$ (with  an extension to the complex plane) along the rectangular contour  in Figure \ref{FigForContour} (left), is zero.
 Thus, we have
	\begin{equation}\label{IntRep-2}
	\begin{split}
\int^{\varphi}_{-\varphi}
 F_\nu^{(\lambda)} (\varphi, \vartheta)\, d\vartheta&
	=  \int^{-\varphi+\ri R}_{-\varphi}  F_\nu^{(\lambda)} (\varphi, \vartheta)\, d\vartheta - \int^{\varphi+\ri R}_{\varphi}  F_\nu^{(\lambda)} (\varphi,\vartheta) \, d\vartheta+ \int^{\varphi+\ri R}_{-\varphi+\ri R}  F_\nu^{(\lambda)} (\varphi, \vartheta) \,d\vartheta\\
	& = \ri \int_0^R \big\{
	 F_\nu^{(\lambda)} (\varphi,  -\varphi+\ri t) - F_\nu^{(\lambda)} (\varphi,  \varphi+\ri t)\big\} dt + \int^{\varphi}_{-\varphi}
	 F_\nu^{(\lambda)} (\varphi,  t+\ri R) \,dt, %\\ &=:{\mathcal I}_1(R)+ {\mathcal I}_2(R),
\end{split}\end{equation}	
where we made the change of variables for three integrals: $\vartheta= -\varphi+\ri t, \varphi+\ri t, t+\ri R,$ respectively.

	\vskip 5pt
\begin{figure}[!ht]
	\begin{center}
		{~}\hspace*{-20pt}	\includegraphics[width=0.4\textwidth, height=0.22\textwidth]{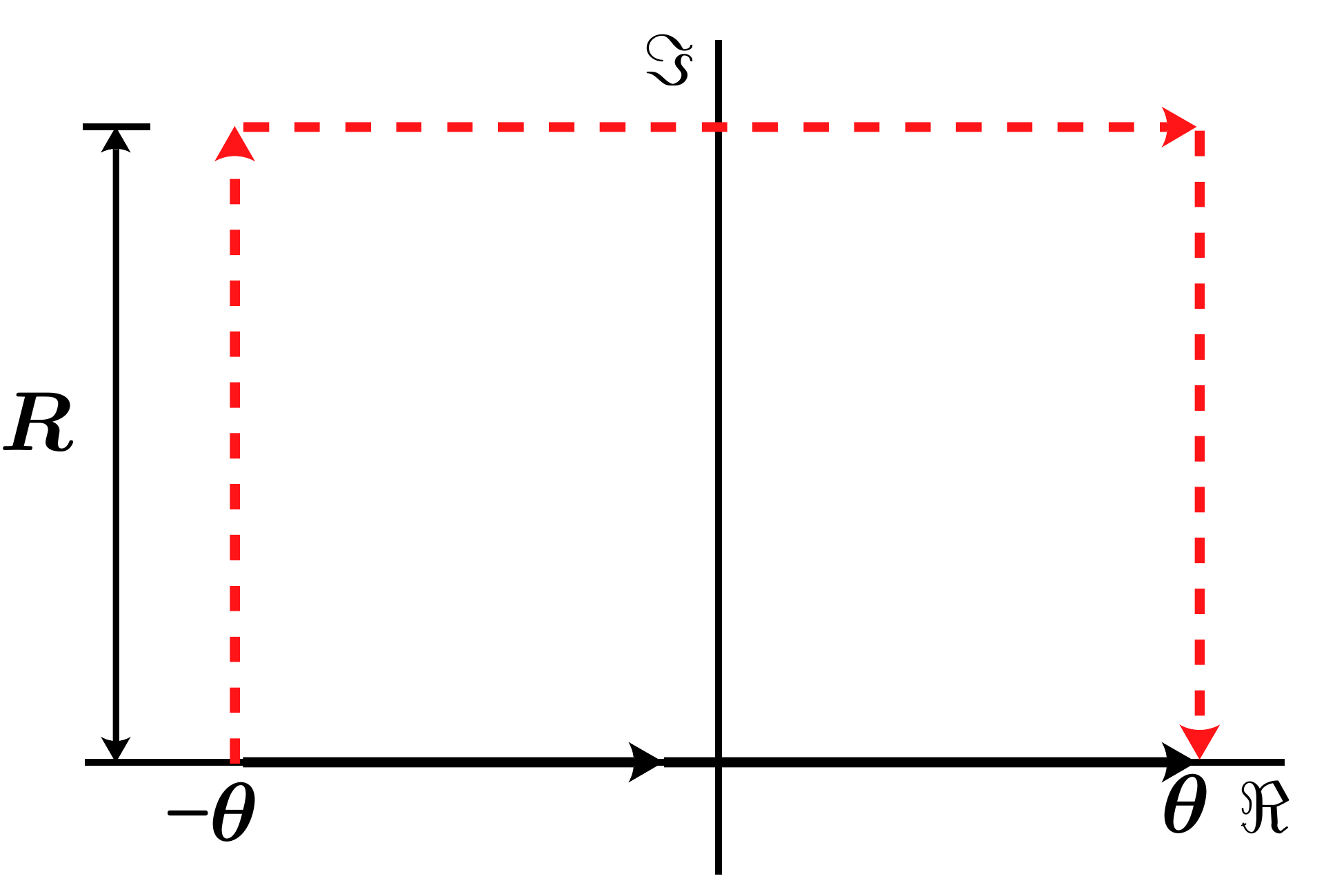}\quad \qquad
		\includegraphics[width=0.4\textwidth, height=0.22\textwidth]{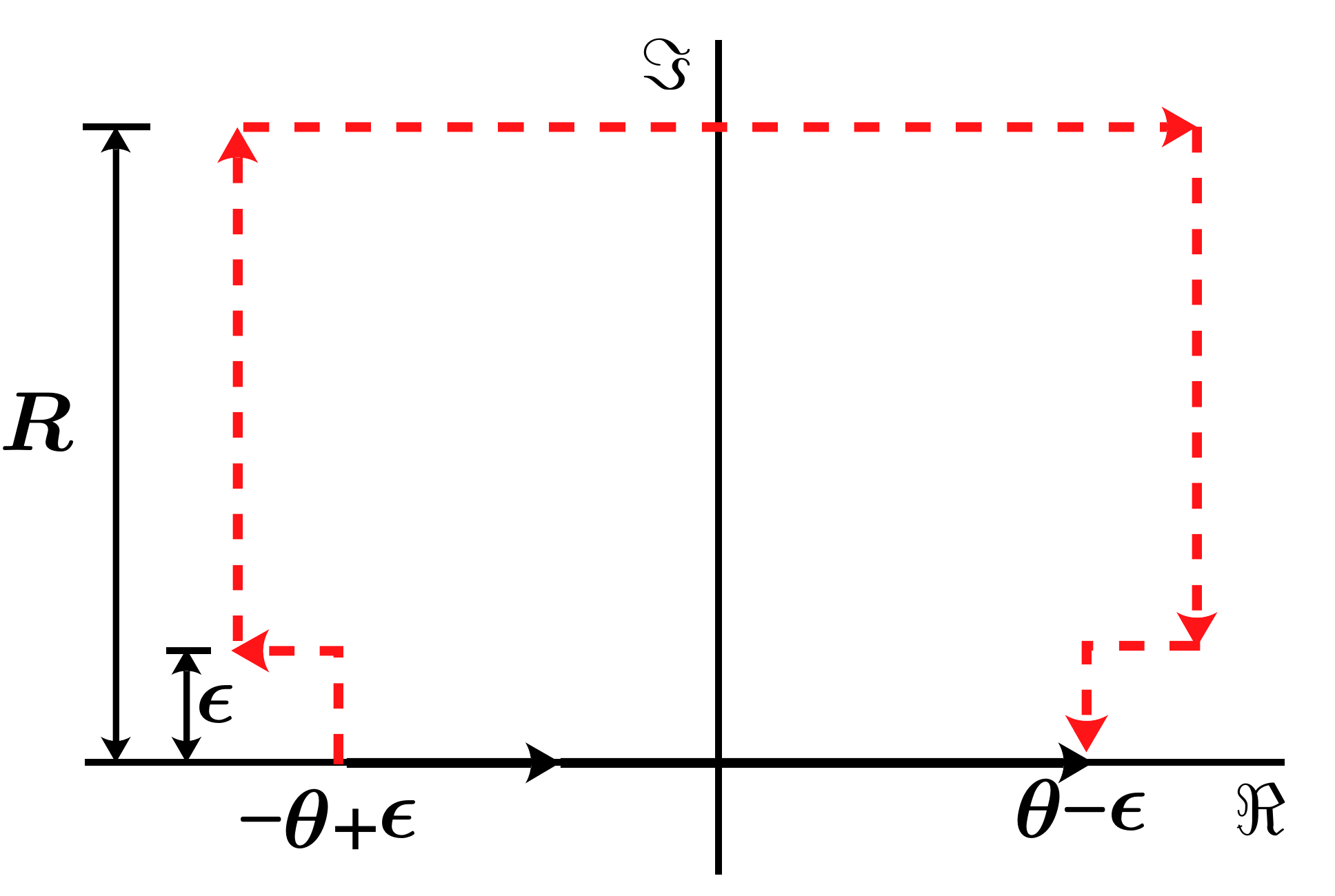}
		\caption{Contour integral for \eqref{IntRep-2}. Left: for $\lambda\ge 1$; Right: for $0<\lambda<1$.}
		\label{FigForContour}
	\end{center}
\end{figure}

For $\lambda\ge 1$ and $R>0,$ we have
\begin{equation}\label{IntRep-3}
\begin{split}
|F_\nu^{(\lambda)} (\varphi,  t+\ri R)|&=\frac{|e^{\ri (\nu+\lambda)t- (\nu+\lambda)R}|}{|\cos (t+\ri R)-\cos\varphi|^{1-\lambda}} \\
&= e^{-(\nu+\lambda) R} \big( (\cos t\cosh R-\cos \varphi)^2+\sin^2t\sinh^2R \big)^{(\lambda-1)/2}\\
&\le  e^{-(\nu+\lambda) R} \big( (\cosh R+1)^2+\sinh^2R \big)^{(\lambda-1)/2}\\
&= 2^{(1-\lambda)/2} e^{-(\nu+1)R} \big(1+2e^{-R}+ 2e^{-2R}+2e^{-3R}+e^{-4R}\big)^{(\lambda-1)/2}\\
&<2^{\lambda-1}  e^{-(\nu+1)R}.
	\end{split}
	\end{equation}
Thus, we have
		\begin{equation}\label{IntRep-3+1}
	\lim_{R\to\infty} \int^{\varphi+\ri R}_{-\varphi+\ri R}  F_\nu^{(\lambda)} (\varphi, \vartheta) \,d\vartheta=\lim_{R\to\infty}  \int^{\varphi}_{-\varphi}
	 F_\nu^{(\lambda)} (\varphi,  t+\ri R) \,dt=0.
	\end{equation}

Recall the notation in \eqref{gttf}: $tg(\varphi, t)=\cos(\varphi-\ri t) - \cos\varphi.$
In view of \eqref{gttf}, we can write $g^{\lambda-1} (\varphi,t)=g^{\lambda-1} (\varphi,0)+t f^{(\lambda)}(\varphi,t).$
Thus, by a direct calculation, we obtain
\begin{equation}\label{IntRep-3007}
\begin{split}
		\ri F_\nu^{(\lambda)}  (\varphi,  -\varphi+\ri t)  &- \ri F_\nu^{(\lambda)} (\varphi,  \varphi+\ri t)\\
		&=\big\{\ri\, g^{\lambda-1} (\varphi,t) e^{-\ri (\nu+\lambda)\varphi}+ \big(\ri\, g^{\lambda-1} (\varphi,t) e^{-\ri (\nu+\lambda)\theta}\big)^* \big\} t^{\lambda-1} e^{-(\nu+\lambda)t}\\
		&= 2\, \RE \big\{\ri\, g^{\lambda-1} (\varphi,t) e^{-\ri (\nu+\lambda)\varphi}\big\} t^{\lambda-1} e^{-(\nu+\lambda)t}\\
		&= 2\, \RE \big\{\ri\, g^{\lambda-1} (\varphi,0) e^{-\ri (\nu+\lambda)\varphi}\big\} t^{\lambda-1} e^{-(\nu+\lambda)t}
		\\&\quad + 2\, \RE \big\{\ri\, f^{(\lambda)} (\varphi,t) e^{-\ri (\nu+\lambda)\varphi}\big\} t^{\lambda} e^{-(\nu+\lambda)t}.
\end{split}
\end{equation}	
Since $\ri =e^{\ri \pi/2}$ and $g(\varphi,0)=(\ri \sin\varphi)^{\lambda-1}$, we have
	\begin{equation}\begin{split}\label{IntRep-6+30}
	\RE \big\{\ri\, g^{\lambda-1} (\varphi,0) e^{-\ri (\nu+\lambda)\varphi}\big\}&=\RE\big\{\ri\,(\ri \sin \varphi)^{\lambda-1}\, e^{-\ri (\nu+\lambda)\varphi}\big\}=
	(\sin \varphi)^{\lambda-1}\,\RE\big\{e^{-\ri ((\nu+\lambda)\varphi-\lambda\pi/2)}\big\}\\
	&=
	(\sin \varphi)^{\lambda-1}\cos ((\nu+\lambda)\varphi- \lambda\pi/2).
	\end{split}\end{equation}
	Using the definition of the Gamma function, we find  that for any $a>0, z>-1,$
	\begin{equation}\begin{split}\label{GGamma}	
	\int^{ \infty}_0 t^{z}e^{-a t}dt=\frac{\Gamma(z+1)}{a^{z+1}},\;\;\;  {\rm as}\;\;\;  \Gamma(z+1)=\int^\infty _0e^{-t}t^zdt.
	\end{split}\end{equation}
As a direct consequence of \eqref{IntRep-6+30}-\eqref{GGamma},  we have
\begin{equation}\label{newReA}
 2\, \int_{0}^\infty \RE \big\{\ri\, g^{\lambda-1} (\varphi,0) e^{-\ri (\nu+\lambda)\varphi}\big\} t^{\lambda-1} e^{-(\nu+\lambda)t} dt
 =\frac{2\, \Gamma(\lambda)} {(\nu+\lambda)^{\lambda}}\,
\frac{\cos ((\nu+\lambda)\varphi- \lambda\pi/2)}{	(\sin \varphi)^{1-\lambda}}.
\end{equation}

Letting $R\to\infty$ in  \eqref{IntRep-2},  we obtain \eqref{mainStep1}-\eqref{ResidualDefn} from \eqref{residual},   \eqref{IntRep-3+1}-\eqref{IntRep-3007} and
\eqref{newReA} directly.

\vskip 10pt
\noindent\underline{{\bf (ii)}   Proof of  \eqref{mainStep1} with  $0<\lambda< 1$.}~  In this case, we integrate along a similar contour but exclude  singular points $\vartheta=\pm \varphi,$ as depicted in  Figure \ref{FigForContour} (right), where  $0<\epsilon<\varphi.$
Like  \eqref{IntRep-2}, we have
 \begin{equation}\label{IntRep-2-1}
\begin{split}
\int^{\varphi-\epsilon}_{-\varphi+\epsilon}
F_\nu^{(\lambda)} (\varphi,& \vartheta)\, d\vartheta
=  I_1(\epsilon,R)+I_2(R)+ I_3(\epsilon)+I_4(\epsilon),
\end{split}\end{equation}
where
 \begin{equation}\label{IntRep2107}
\begin{split}
& I_1(\epsilon,R):=\int^{-\varphi+\ri R}_{-\varphi+\ri \epsilon}  F_\nu^{(\lambda)} (\varphi, \vartheta)\, d\vartheta
- \int^{\varphi+\ri R}_{\varphi+\ri \epsilon}  F_\nu^{(\lambda)} (\varphi, \vartheta) \, d\vartheta,\\
& I_2(R):=\int^{\varphi+\ri R}_{-\varphi+\ri R}  F_\nu^{(\lambda)} (\varphi, \vartheta) \,d\vartheta,
\\&  I_3(\epsilon):= \int_{-\varphi+\epsilon} ^ {-\varphi+\epsilon+\ri \epsilon} F_\nu^{(\lambda)} (\varphi, \vartheta) \,d\vartheta -\int_{\varphi-\epsilon} ^ {\varphi-\epsilon+\ri \epsilon} F_\nu^{(\lambda)} (\varphi, \vartheta) \,d\vartheta\\
&  I_4(\epsilon):= -\int_{-\varphi+\ri \epsilon} ^ {-\varphi+\epsilon+\ri \epsilon} F_\nu^{(\lambda)} (\varphi, \vartheta) \,d\vartheta
  -\int_{\varphi-\epsilon+\ri \epsilon} ^ {\varphi+\ri \epsilon} F_\nu^{(\lambda)} (\varphi, \vartheta) \,d\vartheta.
\end{split}\end{equation}

Using  a change of variable: $\vartheta= \pm\varphi+\ri t,$ and noting that  the derivation in  \eqref{IntRep-3007}-\eqref{IntRep-6+30} is valid for $0<\lambda<1,$ we have
\begin{equation}\label{AcaseB}
\begin{split}
 I_1(\epsilon,R)&= \ri \int_\epsilon^R \big\{
	 F_\nu^{(\lambda)} (\varphi,  -\varphi+\ri t) - F_\nu^{(\lambda)} (\varphi,  \varphi+\ri t)\big\} dt\\
	 &= 2\frac{\cos ((\nu+\lambda)\varphi- \lambda\pi/2)}{ (\sin \varphi)^{1-\lambda}} \int_\epsilon^R t^{\lambda-1} e^{-(\nu+\lambda)t} dt \\
	 &\quad + 2\, \int_\epsilon^R \RE \big\{\ri\, f^{(\lambda)} (\varphi,t) e^{-\ri (\nu+\lambda)\varphi}\big\} t^{\lambda} e^{-(\nu+\lambda)t} dt.
	 \end{split}
\end{equation}
From  \eqref{ftbnds} and \eqref{GGamma}	-\eqref{newReA},   we infer that
\begin{equation}\label{limiI1}
\begin{split}
\lim_{\epsilon\to 0; R\to\infty} I_1(\epsilon,R)&= \frac{2\, \Gamma(\lambda)} {(\nu+\lambda)^{\lambda}}\,
\frac{\cos ((\nu+\lambda)\varphi- \lambda\pi/2)}{	(\sin \varphi)^{1-\lambda}}\\
&\quad +
 2\, \int_0^\infty \RE \big\{\ri\, f^{(\lambda)} (\varphi,t) e^{-\ri (\nu+\lambda)\varphi}\big\} t^{\lambda} e^{-(\nu+\lambda)t} dt.
 \end{split}
\end{equation}
Therefore, it suffices to show
\begin{equation}\begin{split}\label{IntRep-2+1}
\lim_{R\to \infty}I_2(R)
=0,\quad \lim_{\epsilon\to 0} I_3(\epsilon)=\lim_{\epsilon\to 0} I_4(\epsilon)=0.
\end{split}\end{equation}	
%which implies from \eqref{NewIntRep-2} that \eqref{mainStep1}-\eqref{ResidualDefn}.

By \eqref{IntRep-2},  we have
\begin{equation}\label{I2bnde}
 I_2(R)=\int^{\varphi}_{-\varphi}
	 F_\nu^{(\lambda)} (\varphi,  t+\ri R) \,dt,
\end{equation}
and
\begin{equation}\label{IntRep-3baba}
\begin{split}
|F_\nu^{(\lambda)} (\varphi,  t+\ri R)|&=\frac{|e^{\ri (\nu+\lambda)t- (\nu+\lambda)R}|}{|\cos (t+\ri R)-\cos\varphi|^{1-\lambda}}\\
&= e^{-(\nu+\lambda) R} \big( (\cos t\cosh R-\cos \varphi)^2+\sin^2t\sinh^2R \big)^{(\lambda-1)/2}\\
&\le  e^{-(\nu+\lambda) R} (\sinh R) ^{\lambda-1} |\sin t|^{\lambda-1}.
%&= 2^{(1-\lambda)/2} e^{-(\nu+1)R} \big(1+2e^{-R}+ 2e^{-2R}+2e^{-3R}+e^{-4R}\big)^{(\lambda-1)/2}\\
%&<2^{\lambda-1}  e^{-(\nu+1)R}.
	\end{split}
	\end{equation}
Thus, for $0<\lambda<1$ and $\varphi\in (0,\pi),$
\begin{equation}\label{I2bnds}
|I_2(R)|\le  \frac{2 e^{-(\nu+\lambda) R}}
{(\sinh R) ^{1-\lambda}} \int_0^\varphi \frac{1} {(\sin t)^{1-\lambda}} dt\to 0,\;\;\; {\rm as}\;\; R\to \infty.
\end{equation}

Next, using a change of variable: $\theta=-\varphi+ \epsilon +\ri t,  \varphi-\epsilon +\ri t,$ respectively, for two integrals, we obtain from
%\eqref{IntRep-3007} and \eqref{IntRep-6+30} with $\varphi-\epsilon$ in place of $\varphi$
a direct calculation that
\begin{equation}\label{IntRep335}
\begin{split}
I_3(\epsilon)&= \ri \int_{0} ^ {\epsilon} \big\{F_\nu^{(\lambda)} (\varphi,- \varphi+ \epsilon +\ri t) -F_\nu^{(\lambda)} (\varphi,\varphi- \epsilon +\ri t) \big\} dt\\
&=2 \int_0^\epsilon \Re\Big\{\frac{\ri e^{-\ri (\nu+\lambda)(\varphi-\epsilon)}}{(\cos(\varphi-\epsilon-\ri t)-\cos\varphi)^{1-\lambda}} \Big\}  e^{-(\nu+\lambda)t} dt.
\end{split}
\end{equation}
Note that we have
\begin{equation}\label{IntRep469}
\begin{split}
|\cos(\varphi-\epsilon-\ri t)-\cos\varphi|&=((\cos(\varphi-\epsilon)\cosh t-\cos \varphi)^2+\sin^2(\varphi-\epsilon)\sinh^2 t)^{1/2}\\
&\ge |\sin (\varphi-\epsilon)||\sinh t| \ge |\sin (\varphi-\epsilon)||\sin t|,
\end{split}
\end{equation}
 where we used the inequality: $|\sin t|\le \sinh t$ for $t>0$ (cf. \cite[(4.18.9)]{Olver2010Book}). Therefore, for $0<\lambda<1,$ we have
\begin{equation}\label{I3bnds}
|I_3(\epsilon)|\le  \frac{2 }
{(\sin (\varphi-\epsilon)) ^{1-\lambda}} \int_0^\epsilon {(\sin t)^{\lambda-1}} dt\to 0,\;\;\; {\rm as}\;\; \epsilon\to 0.
\end{equation}

%In view of the bounds of $f^{(\lambda)}$ in Lemma \ref{UperBoundf},  we deduce that $\lim_{\epsilon\to 0}I_3(\epsilon)=0.$

Similarly, with a change of variable: $\theta=-\varphi+\ri \epsilon + t,  \varphi+ \ri \epsilon - t,$ respectively, for two integrals,
\begin{equation}\label{IntRep187}
\begin{split}
I_4(\epsilon)&= - \int_{0} ^ {\epsilon} \big\{F_\nu^{(\lambda)} (\varphi, -\varphi+\ri \epsilon + t) +F_\nu^{(\lambda)} (\varphi, \varphi+ \ri \epsilon - t) \big\} dt\\
&= -2 e^{-\epsilon(\nu+\lambda)}\int_0^\epsilon \Re\Big\{\frac{e^{\ri (\nu+\lambda)(t-\varphi)}}{(\cos(t-\varphi+\ri \epsilon)-\cos\varphi)^{1-\lambda}} \Big\} dt.
\end{split}
\end{equation}
It is evident that
\begin{equation}\label{IntRep520}
\begin{split}
|\cos(t-\varphi+\ri\epsilon)-\cos\varphi|&=\big((\cos(t-\varphi)\cosh \epsilon-\cos \varphi)^2+\sin^2(t-\varphi)\sinh^2 \epsilon\big)^{1/2}\\
&\ge\sin(\varphi-t)\sinh \epsilon,
\end{split}
\end{equation}
where as  $0<\varphi<\pi$ and $0<t<\epsilon<\varphi$, we have
$$0<\varphi-\epsilon<\varphi-t<\varphi<\pi.$$
%Recall the Jordan's inequality (cf. \cite[(4.18.3)]{Olver2010Book})
By  the fundamental inequalities,
\begin{equation}\label{RecallJordanIneqty}
%\cos \eta\le \frac{\sin\eta}{\eta}\le 1,\;\; 0\le \eta \le \pi; \quad
\frac{2}{\pi} z\le \sin z\le z ,\quad
 z \in (0,\pi/2),
\end{equation}
	we obtain
\begin{equation}\label{IntRepPartI-1}
%\cos \eta\le \frac{\sin\eta}{\eta}\le 1,\;\; 0\le \eta \le \pi; \quad
%\max\Big\{\frac{1}{\eta},\frac{1}{\pi-\eta}\Big\}\le
\frac{1}{\sin z}=\frac{1}{\sin(\pi-z)}\le
\frac{\pi}{2}\max\Big\{\frac{1}{z},\frac{1}{\pi-z}\Big\},\quad
z\in (0, \pi).
\end{equation}
This implies
\begin{equation}\label{IntRepPartI-2}
%\cos \eta\le \frac{\sin\eta}{\eta}\le 1,\;\; 0\le \eta \le \pi; \quad
%\max\Big\{\frac{1}{\eta},\frac{1}{\pi-\eta}\Big\}\le
\sin(\varphi-t)\ge
\frac{2}{\pi}\min\big\{\varphi-t,\pi-\varphi+t\big\}> \frac{2}{\pi}\min\big\{\varphi-\epsilon,\pi-\varphi\big\}.
\end{equation}
From \eqref{IntRep187}	and \eqref{IntRepPartI-2}, we obtain
\begin{equation}\label{IntRepPartI-3}
\begin{split}
|I_4(\epsilon)|&
\le  2  e^{-\epsilon(\nu+\lambda)} \int_0^\epsilon |\cos(t-\varphi+\ri \epsilon)-\cos\varphi|^{\lambda-1} dt\\
&\le  \frac{2^{\lambda}}{\pi^{\lambda-1}}\,\frac{\epsilon}  {(\sinh \epsilon)^{1-\lambda}} \max\big\{(\varphi-\epsilon)^{\lambda-1} ,(\pi-\varphi)^{\lambda-1} \big\}\to 0,  \;\; {\rm as}\;\; \epsilon\to 0.
\end{split}
\end{equation}
%where we used  the L'Hospital's rule for the ratio.

Thus, letting $\epsilon\to 0$ and $R\to \infty$ in \eqref{IntRep-2-1}, we obtain  \eqref{mainStep1}-\eqref{ResidualDefn} with $0<\lambda<1$   from \eqref{residual},  \eqref{limiI1}, \eqref{I2bnds},  \eqref{I3bnds} and \eqref{IntRepPartI-3}.
\end{proof}

% yields
%\begin{equation}\label{IntRepPartI-4}
%\lim_{\epsilon\to 0+}=\lim_{\epsilon\to 0+}\frac{\epsilon}{ (\sinh \epsilon)^{1-\lambda}} =\lim_{\epsilon\to 0+}\frac{(\sinh \epsilon)^{\lambda}}{(1-\lambda) \cosh \epsilon}=0.
%\end{equation}
%Together with \eqref{IntRepPartI-3}, leads to
%Therefore, for $0<\lambda<1,$ we derive from \eqref{IntRepPartI-3} and \eqref{IntRepPartI-4} that
%\begin{equation}\label{IntRepPartI-5}
%|I_4(\epsilon)|\to 0,\;\;\; {\rm as}\;\; \epsilon\to 0.
%\end{equation}
%{\bf Add the proof.}
%\end{proof}

\subsection{Proof of Theorem \ref{AsympGeg-A}} With the bounds and identity  in Lemmas \ref{UperBoundf}-\ref{indtity}, we are ready to  show  the main result.

From  \eqref{IntRep} and  Lemma \ref{indtity},  we derive
\begin{equation}\label{ResidualRnu}
\begin{split}
	{\mathcal  R}_\nu^{(\lambda)} (\varphi) &= \frac{2^\lambda\,\Gamma(\lambda+1/2)}{\sqrt{\pi}\,  \Gamma(\lambda)}  (\sin\varphi)^{1-\lambda}\, \breve{\mathcal  R}_\nu^{(\lambda)} (\varphi)\\
	  &=\frac{2^\lambda\,\Gamma(\lambda+1/2)}{\sqrt{\pi}\, \Gamma(\lambda)}	(\sin \varphi)^{1-\lambda}  \int^{ \infty}_0 \RE\big\{\ri\, e^{-\ri (\nu+\lambda)\varphi}	 f^{(\lambda)}(\varphi, t)\big\}t^{\lambda}e^{-(\nu+\lambda) t}dt.
\end{split}
\end{equation}
We now estimate $\breve {\mathcal R}_\nu^{(\lambda)}(\varphi)$ in  \eqref{mainStep1}-\eqref{ResidualDefn}   by using Lemma \ref{UperBoundf}. % and

(i) For $0<\lambda\le 2$  and $\nu+\lambda>1,$ we obtain  from \eqref{ftbnds} and \eqref{GGamma}  that
\begin{equation}\label{IntRepPartII-1}
\begin{split}
|\breve{\mathcal  R}_\nu^{(\lambda)} (\varphi)|&
%\le \int^{ \infty}_0| \RE\big\{\ri\, e^{-\ri (\nu+\lambda)\varphi}	f^{(\lambda)}(\varphi, t)\big\}t^{\lambda}e^{-(\nu+\lambda) t}|dt
\le   \int^{ \infty}_0| 	f^{(\lambda)}(\varphi, t)|\, t^{\lambda}e^{-(\nu+\lambda) t}dt\\
&\le |\lambda-1| \, (\sin \varphi)^{\lambda-1} \, \int^{ \infty}_0  \Big( |\cot \varphi| +\frac {2t} 3 \Big)\, t^{\lambda}e^{-(\nu+\lambda-1) t}dt\\
&=\frac{|\lambda-1|\, \Gamma(\lambda+1)}{(\nu+\lambda-1)^{\lambda+1}}( \sin\varphi)^ {\lambda-1}\big( |\cot \varphi|+\frac{2}{3}\frac{\lambda+1}{\nu +\lambda-1}\Big).
%\\& f^{(\lambda)}(\varphi, t):=\frac{g^{\lambda-1}(\varphi, t)-	g^{\lambda-1}(\varphi, 0)} t,\quad g(\varphi, 0):=\lim_{t\to 0^+} g(\varphi, t)=\ri \sin \varphi,
\end{split}
\end{equation}

(ii) For $\lambda>  2$ and $\nu>\lambda-3,$ we derive from \eqref{ftbndsB} and \eqref{GGamma}  that
\begin{equation}\label{IntRepPartII-2}
\begin{split}
 |\breve{\mathcal  R}_\nu^{(\lambda)} & (\varphi)|\le    \int^{ \infty}_0| 	 f^{(\lambda)}(\varphi, t)|\, t^{\lambda}e^{-(\nu+\lambda) t}dt\\
&\le   2^{\lambda/2} (\lambda-1)   (\sin \varphi)^{\lambda-1} \int^{ \infty}_0 \Big( |\cot \varphi| +\frac {2t} 3 \Big) \Big(1+
\frac{|\cot \varphi|^{\lambda-2}} {2^{\lambda-2}}\, t^{\lambda-2} e^{(\lambda-2)t}\Big) t^{\lambda}e^{-(\nu+1) t}dt\\
&=\frac{2^{\lambda/2} (\lambda-1)  \,\Gamma(\lambda+1)}{(\nu+1)^{\lambda+1}} \,(\sin \varphi)^{\lambda-1} \Big\{|\cot \varphi|+
\frac{2}{3}\frac{\lambda+1}{
\nu+1}\\
&\quad +\frac{ 2^{2-\lambda} \Gamma(2\lambda-1)}{\Gamma(\lambda+1)} \frac{(\nu+1)^{\lambda+1}}{(\nu-\lambda+3)^{2\lambda-1}}|\cot \varphi|^{\lambda-2}
\Big( |\cot \varphi|+\frac{2}{3}\frac{2\lambda-1}{\nu-\lambda+3}\Big)\Big\}.
\end{split}
\end{equation}

Thanks to  \eqref{ResidualRnu},  we can derive the bounds in \eqref{BoundIntFunft-A}-\eqref{BoundIntFunft-B} from this relation and \eqref{IntRepPartII-1}-\eqref{IntRepPartII-2}, respectively.		
	This completes the proof of Theorem  \ref{AsympGeg-A}.

%Let prove the asymptotic of ${}^{r\!}G_\nu^{(\lambda)}(\cos \varphi)$ as $\nu$ is Large.
%The symbol $O$ for asymptotic introduced in \cite[(2.1.3)]{Olver2010Book}, which is: for  $z$ in a point set,
%\begin{equation}\label{DefineofO}
%\begin{split}
%f(z) = O(g(z)) \Longleftrightarrow  |f(z)/g(z)| {\rm ~is~ bounded}.
%\end{split}\end{equation}

%\vspace*{2pt}
\subsection{Proof of Corollary \ref{AsympGeg-B}}  We prove two cases separately.

\vskip 3pt
(i) We obtain from \eqref{BoundIntFunft-A} that
	\begin{equation}\begin{split}\label{AsympGeg-B3}	
{\nu^{\lambda+1}\sin \varphi}\,  {| {\mathcal  R}_\nu^{(\lambda)} (\varphi)|}&\le
\frac{\lambda|\lambda-1|2^\lambda\Gamma(\lambda+1/2)}{\sqrt{\pi}} \Big(|\cos \varphi|+\frac 2 3 \frac{(\lambda+1) \sin \varphi} {\nu+\lambda-1}\Big)\Big(1+\frac{1-\lambda}{\nu+\lambda-1}\Big)^{\lambda+1}\\
&\le \frac{\lambda |\lambda-1|2^\lambda\Gamma(\lambda+1/2)}{\sqrt{\pi}} \Big(1+\frac 2 3 \frac{\lambda+1} {\nu+\lambda-1}\Big) \Big(1+\frac{1-\lambda}{\nu+\lambda-1}\Big)^{\lambda+1}.
	\end{split}\end{equation}
	%where we used
%	$$|\cos \varphi|+\frac 2 3 \frac{\lambda+1} {\nu+\lambda-1}\sin \varphi\le 1+\frac 2 3 \frac{\lambda+1} {\nu+\lambda-1}.$$ % =\frac{5\lambda+2}{3\lambda}.$$
%	
Using the basic inequality: $\ln(1+z)\le z$ for $z>-1,$ %By the inequality (4.5.1) in \cite{Olver2010Book} with $\ln(1)=0$, we have
	%\begin{equation}\begin{split}\label{AsympGeg-B4}	
%	\ln(1+z)\le z,\quad z>-1.
%	\end{split}\end{equation}
	we find %Using \eqref{AsympGeg-B4}, we get for $\nu\ge 1$,
	\begin{equation}\begin{split}\label{AsympGeg-B5}	
\Big(1+\frac{1-\lambda}{\nu+\lambda-1}\Big)^{\lambda+1}&={\rm exp}\Big({(\lambda+1)\ln\Big(1+\frac{1-\lambda}{\nu+\lambda-1} \Big) }\Big)\le {\rm exp}\Big( \frac{1-\lambda^2} {\nu+\lambda-1} \Big).
	%e^{(1-\lambda^2)/(\nu+\lambda-1)}. %\le e^{|1-\lambda^2|/\lambda } .
	\end{split}\end{equation}
Thus, we obtain  $B_\nu^{(\lambda)}$ immediately from the above for this case.

	%From \eqref{DefineofO},   \eqref{AsympGeg-B3} and \eqref{AsympGeg-B5}, we have
%	\begin{equation}\begin{split}\label{AsympGeg-B6}
%	| {\mathcal  R}_\nu^{(\lambda)} (\varphi)|\le {\mathcal  C}^{(\lambda)}\nu^{-\lambda-1}\sin^{-1}\varphi,
%	\end{split}\end{equation}
%	here
%	$${\mathcal  C}^{(\lambda)}=\frac{|\lambda-1|(5\lambda+2)\Gamma(\lambda+1/2)}{3\sqrt{\pi}} e^{|1-\lambda^2|/\lambda }.$$
%	
\vskip 5pt
(ii) For $\lambda> 2,$ $\nu-\lambda+3\ge 0$ and $\varphi\in [c\nu^{-1},\pi-c\nu^{-1}]$,  we obtain from \eqref{BoundIntFunft-B} that
	\begin{equation}\begin{split}\label{AsympGeg-B8}	
& {\nu^{\lambda+1}\sin\varphi}\, {| {\mathcal  R}_\nu^{(\lambda)} (\varphi)|}\le
\frac{\lambda(\lambda-1)2^{3/2\lambda}\Gamma(\lambda+1/2)}{\sqrt{\pi}}\frac{\nu^{\lambda+1}} {(\nu+1)^{\lambda+1}}\Big\{|\cos \varphi|+
\frac{2}{3}\frac{\lambda+1}{\nu+1}\sin \varphi\\
& \;\;\;\;  +\frac{ 2^{2-\lambda} \Gamma(2\lambda-1)}{\Gamma(\lambda+1)} \frac{(\nu+1)^{\lambda+1}}{(\nu-\lambda+3)^{2\lambda-1}}|\cot \varphi|^{\lambda-2}
\Big( |\cos \varphi|+\frac{2}{3}\frac{2\lambda-1}{\nu-\lambda+3}\sin \varphi\Big)\Big\}.
\end{split}\end{equation}
It is evident that
	\begin{equation}\begin{split}\label{AsympGeg-B9}	
|\cos \varphi|+
\frac{2}{3}\frac{\lambda+1}{\nu+1}\sin \varphi\le\frac{3\nu+2\lambda+5}{3(\nu+1)},\;\;
% 1+\frac{2}{3}\frac{\lambda+1}{\nu+1},\;\;
|\cos \varphi|+\frac{2}{3}\frac{2\lambda-1}{\nu-\lambda+3}\sin \varphi\le
\frac{3\nu+\lambda +7}{3(\nu-\lambda+3)}.
%1+\frac{2}{3}\frac{2\lambda-1}{\nu-\lambda+3}.
\end{split}\end{equation}
%	\begin{equation}\begin{split}\label{AsympGeg-B9}	
%|\cos \varphi|+
%\frac{2}{3}\frac{\lambda+1}{\nu+1}\sin \varphi\le\frac{5}{3}+ \frac{2}{3}\lambda,\;\;
%|\cos \varphi|+\frac{2}{3}\frac{2\lambda-1}{\nu-\lambda+3}\sin \varphi\le  \frac{1}{3}+ \frac{4}{3}\lambda.
%\end{split}\end{equation}
%where we used $\nu-\lambda+3\ge 1.$
We write
	\begin{equation}\label{AsympGeg-B10}	
\frac{(\nu+1)^{\lambda+1}|\cot \varphi|^{\lambda-2}}{(\nu-\lambda+3)^{2\lambda-1}}=
\Big(\frac{\nu+1}{\nu-\lambda+3}\Big)^{\lambda+1}\,\Big(\frac{\nu}{\nu-\lambda+3}\Big)^{\lambda-2}
\,\Big(\frac{|\cot \varphi|}{\nu}\Big)^{\lambda-2}.
%\frac{(\nu+1)^{\lambda+1}}{(\nu-\lambda+3)^{\lambda+1}}\,\frac{\nu^{\lambda-2}}{(\nu-\lambda+3)^{\lambda-2}}\,\frac{|\cot \varphi|^{\lambda-2}}{\nu^{\lambda-2}}.
\end{equation}
 Using the inequality: $\ln(1+z)\le z$ for $z>-1$ again, we derive
 	\begin{equation}\begin{split}\label{AsympGeg-B11}	
\Big(\frac{\nu+1}{\nu-\lambda+3}\Big)^{\lambda+1}={\rm exp}\Big({(\lambda+1)\ln\Big(1+\frac{\lambda-2}{\nu-\lambda+3}\Big)}\Big)
\le {\rm exp}\Big(\frac{(\lambda-2)(\lambda+1)}{\nu-\lambda+3}\Big),
%\le e^{(\lambda-2)(\lambda+1)} ,
 \end{split}\end{equation}
 and
 	\begin{equation}\begin{split}\label{AsympGeg-B12}	
\Big(\frac{\nu}{\nu-\lambda+3}\Big)^{\lambda-2}={\rm exp}\Big({(\lambda-2)\ln\Big(1+\frac{\lambda-3}{\nu-\lambda+3}\Big)}\Big)
\le {\rm exp}\Big(\frac{(\lambda-3)(\lambda+1)}{\nu-\lambda+3}\Big).
 \end{split}\end{equation}
% \begin{equation}\begin{split}\label{AsympGeg-B12}	
 %\frac{\nu^{\lambda-2}}{(\nu-\lambda+3)^{\lambda-2}}=e^{(\lambda-2)\ln(1+(\lambda-3)/(\nu-\lambda+3))}
% \le e^{|\lambda-3|(\lambda+1)/(\nu-\lambda+3)}\le e^{|\lambda-3|(\lambda+1)}.
% \end{split}\end{equation}
		By	\eqref{IntRepPartI-1},  we have
\begin{equation*}
\begin{split}
\frac{1}{\nu \sin\varphi}\le
\frac{\pi}{2}\max\Big\{\frac{1}{\nu \varphi},\frac{1}{\nu(\pi-\varphi)}\Big\}\le  \frac{c\pi}{2},
%(\nu \sin \varphi)^{-1}?
\end{split}\end{equation*}
which implies
\begin{equation}\label{AsympGeg-B13}
\begin{split}
\Big(\frac{|\cot \varphi|}{\nu}\Big)^{\lambda-2}=|\cos \varphi|^{\lambda-2}\Big(\frac{1}{\nu\sin\varphi }\Big)^{\lambda-2}
\le \Big(\frac{c\pi}{2}\Big)^{\lambda-2}.
\end{split}\end{equation}		
We therefore derive from the above  $B_\nu^{(\lambda)}$ in the second case.

\subsection{Proof of Theorem \ref{AsympGeg-CaseA}}\label{proof-Thm2.2}  %\begin{proof}
For $\theta\in (0,\pi),$ we can derive
the  bounds \eqref{AsympGeg-cA}-\eqref{AsympGeg-cB} from \eqref{uniformBndR} by multiplying $\sin\theta$ and $(\sin \theta)^{\lambda-1},$ respectively,  for two cases.

In order to derive the upper bounds uniform for both $\nu$ and $\theta,$ it is necessary  study the behaviors of ${}^{r\!}G_\nu^{(\lambda)}(x)$ at $x=\pm 1$ (i.e., $\theta=0,\pi$).
 It is evident that
by \eqref{rgjfdef}, ${}^{r\!}G_\nu^{(\lambda)}(1)=1$ for all $\lambda>-1/2$ and $\nu\ge0.$
We now  examine the behavior of right GGF-Fs at $x=-1.$ It is clear that
if $\nu=n\in {\mathbb N}_0,$ we have ${}^{r\!}G_n^{(\lambda)}(-1)=(-1)^n\, {}^{r\!}G_n^{(\lambda)}(1)=(-1)^n.$
%from \eqref{obsvers0} and \eqref{nupm1}, we obtain that for $\lambda>-1/2$ and $\varphi\in(\pi-c\nu^{-1},\pi),$
%\begin{equation}\label{nupm12}	\begin{split}
%{}^{r\!}G_\nu^{(\lambda)}(\cos \varphi)&=(-1)^{\nu}	\,{}^{r\!}G_\nu^{(\lambda)}(-\cos \varphi)=(-1)^{\nu}\,	{}^{r\!}G_\nu^{(\lambda)}(\cos (\pi-\varphi))\\
%&=(-1)^{\nu}\big\{	1+O\big(\nu^2\sin^{2}(\pi-\varphi)\big)\big\}=(-1)^{\nu}	 +O\big(\nu^2\sin^{2}\varphi\big).
%\end{split}\end{equation}	
We now consider the case with  $\nu \notin {\mathbb N}_0$.
Note that for  $-1/2<\lambda<1/2$  (cf. \cite[Prop. 2.3]{Liu2017arXiv}):
 \begin{equation}\label{defBS00}
		{}^{r\!}G_\nu^{(\lambda)}(-1) =
		\frac{ \cos((\nu+\lambda)\pi)}{\cos(\lambda\pi)},% = (-1)^{[\nu]}\,  {}^{l}G_\nu^{(\lambda)}(1)\,
		\end{equation}
so	${}^{r\!}G_\nu^{(\lambda)}(x)$ is continuous on $[-1,1].$ However, for $\lambda\ge 1/2$ and
$\nu\not\in {\mathbb N}_0$, ${}^{r\!}G_\nu^{(\lambda)}(x)$  is singular at $x=-1.$  Indeed, according to \cite[Prop. 2.3]{Liu2017arXiv}, we have
	 %If $\lambda=1/2$ and $\nu\not \in {\mathbb N}_0,$ then
		\begin{equation}\label{defBS0A}
		\lim_{x\to -1^+}\frac{{}^{r\!}G_\nu^{(1/2)}(x)}{\ln(1+x)}= \frac{\sin (\nu\pi)} {\pi}\,,\quad \nu\not \in {\mathbb N}_0\,;  % =  \lim_{x\to 1^-}\frac{(-1)^{[ \nu]}\, {}^{l}G_\nu^{(\lambda)}(x)}{\ln(1-x)}\,.
		\end{equation}
	and for $\lambda>1/2$ and $\nu\not \in {\mathbb N}_0,$  we have
		\begin{equation}\label{defBS0B}
	%	\begin{split}
		\lim_{x\to -1^+} \Big(\frac{1+x} 2\Big)^{\lambda-1/2}\,  {}^{r\!}G_\nu^{(\lambda)}(x)=
		-\frac{\sin(\nu\pi)}\pi
		 \frac{\Gamma(\lambda-1/2)\Gamma(\lambda+1/2)\Gamma(\nu+1)}{\Gamma(\nu+2\lambda)}:={\mathcal Q}_\nu^{(\lambda)}.
		%\\
	%	&= (-1)^{[\nu]} \!\lim_{x\to 1^-} \Big(\frac{1-x} 2\Big)^{\lambda-1/2}\,  {}^{l}G_\nu^{(\lambda)}(x).
	%	\end{split}
		\end{equation}
Note that \eqref{defBS0B} also holds for $\nu=n\in {\mathbb N}_0,$ as ${\mathcal Q}_n^{(\lambda)}=0.$

We now consider the case with $\theta=0$.
  As ${}^{r\!}G_\nu^{(\lambda)}(1)=1$,  taking the limit $\theta\to 0,$ and
   find readily that the above bounds hold (note: ${\widetilde {\mathcal R}^{(\lambda)}_\nu}(0)=0$, but ${\widetilde {\mathcal S}^{(\lambda)}_\nu}(\theta)>0$ in \eqref{AsympGeg-cA}-\eqref{AsympGeg-cB}).

 %have the above estimates are valid for $\theta= 0.$

It remains to consider $\theta\to \pi^-,$ i.e., $x\to -1^+.$ Apparently, we have
$\sin\theta=\sqrt{1-x^2}.$ As a direct consequence of  \eqref{defBS00}-\eqref{defBS0A},
we have that for $0<\lambda\le 1/2,$
\begin{equation}\label{prfadd1}
\begin{split}
\lim_{\theta\to \pi^-} \widetilde{\mathcal R}_\nu^{(\lambda)}(\theta)& =
\lim_{\theta\to \pi^-}\big\{(\sin \theta)^{\lambda+1} \, {}^{r\!}G_\nu^{(\lambda)}(\cos\theta)\big\}
=\lim_{x\to -1^+} (1-x^2)^{(\lambda+1)/2} \, {}^{r\!}G_\nu^{(\lambda)}(x)=0.
\end{split}
\end{equation}
Similarly, by \eqref{defBS0B},  we have that for $1/2<\lambda<2,$
\begin{equation}\label{prfadd2}
\begin{split}
\lim_{\theta\to \pi^-} \widetilde{\mathcal R}_\nu^{(\lambda)}(\theta)&
=\lim_{x\to -1^+} \big\{(1-x^2)^{1-\lambda/2} (1-x^2)^{\lambda-1/2} \, {}^{r\!}G_\nu^{(\lambda)}(x)\big\}=0.
\end{split}
\end{equation}
%and for  $\lambda=2,$ and $\nu\not\in {\mathbb N_0},$
%\begin{equation}\label{prfadd3}
%\begin{split}
%\lim_{\theta\to \pi} \widetilde{\mathcal R}_\nu^{(2)}(\theta)&
%=\lim_{x\to -1^+} \big\{(1-x^2)^{3/2} \, {}^{r\!}G_\nu^{(2)}(x)\big\}={\mathcal Q}_\nu^{(2)}.
%\end{split}
%\end{equation}
For $\lambda\ge 2,$  we find from \eqref{defBS0B} that
\begin{equation}\label{prfadd4}
\begin{split}
\lim_{\theta\to \pi^-} \widetilde{\mathcal R}_\nu^{(\lambda)}(\theta)& =
\lim_{\theta\to \pi^-}\big\{(\sin \theta)^{2\lambda-1} \, {}^{r\!}G_\nu^{(\lambda)}(\cos\theta)\big\}
=\lim_{x\to -1^+} (1-x^2)^{\lambda-1/2} \, {}^{r\!}G_\nu^{(\lambda)}(x)\\
&= 2^{2\lambda-1}\,{\mathcal Q}_\nu^{(\lambda)}.
\end{split}
\end{equation}

(i) For $0<\lambda\le 2,\, \nu+\lambda>1$ and $\nu>0,$
we find from \eqref{BoundIntFunft-A} that
%From \eqref{Behavior-1}-\eqref{Behavior-2} and a simple calculation, we have
%$$\lim_{\theta\to \pi}\big\{(\sin \varphi)^{\lambda+1}\,{}^{r\!}G_\nu^{(\lambda)}(\cos \varphi)\big\}=0.$$
%Noting that
\begin{equation}\label{pfadd5}
\lim_{\theta\to \pi^-}{\widetilde {\mathcal S}^{(\lambda)}_\nu}(\theta)=\frac{\lambda|\lambda-1|2^\lambda\Gamma(\lambda+1/2)}{\sqrt{\pi}
(\nu+\lambda-1)^{\lambda+1}}.
\end{equation}
Thus, in this case, it is evident that by  \eqref{prfadd1}-\eqref{prfadd2} and \eqref{pfadd5},   \eqref{AsympGeg-cA} holds for $0<\lambda<2$.
For $\lambda=2,$  we obtain from \eqref{prfadd4}-\eqref{pfadd5} that
\begin{equation*}\label{prfadd5}
\begin{split}
\lim_{\theta\to \pi^-} \widetilde{\mathcal R}_\nu^{(2)}(\theta)
= 2^{3}\,{\mathcal Q}_\nu^{(2)}=-\frac{3\sin (\nu \pi)}{(\nu+1)(\nu+2)(\nu+3)},\quad
\lim_{\theta\to \pi^-}{\widetilde {\mathcal S}^{(2)}_\nu}(\theta)=\frac {6}{(\nu+1)^3}.
\end{split}
\end{equation*}
Hence, \eqref{AsympGeg-cA} holds for $\lambda=2.$

 (ii) For $\lambda>  2,\, \nu>\lambda-3$ and $\nu>0,$  we obtain from
%\eqref{defBS0B} that
%\begin{equation}\label{AsympGeg-CaseA-1}
%\lim_{\theta\to \pi}\big\{(\sin \varphi)^{2\lambda-1}\,{}^{r\!}G_\nu^{(\lambda)}(\cos \varphi)\big\}=-\frac{\sin(\nu\pi)}\pi
%		 \frac{2^{2\lambda-1}\Gamma(\lambda-1/2)\Gamma(\lambda+1/2)\Gamma(\nu+1)}{\Gamma(\nu+2\lambda)},
%\end{equation}
%where we used
%$$\lim_{\theta\to \pi}\Big\{(\sin \varphi)^{2\lambda-1}\,\Big(\frac{1+\cos \varphi} 2\Big)^{1/2-\lambda}\Big\}\overset{\eta=\pi-\theta}{=}
%\lim_{\eta\to 0}\Big\{(\sin \eta)^{2\lambda-1}\,\sin ^{1-2\lambda}(\eta/2)\Big\}=2^{2\lambda-1},$$
%We obtain from
\eqref{BoundIntFunft-B} that
\begin{equation}\label{AsympGeg-CaseA-2}
\lim_{\theta\to \pi^-}{\widetilde {\mathcal S}^{(\lambda)}_\nu}(\theta)=\frac{2^{2+\lambda/2}\Gamma(\lambda+1/2)\Gamma(2\lambda-1)}{\sqrt{\pi}(\nu-\lambda+3)^{2\lambda-1}\Gamma(\lambda-1)}
=\frac{\lambda\,2^{5\lambda/2}\Gamma(\lambda-1/2)\Gamma(\lambda+1/2)}{\pi(\nu-\lambda+3)^{2\lambda-1}},
\end{equation}
where we used the identity (cf. \cite[(6.1.18)]{Abramowitz1972Book}):
\begin{equation*}
	\Gamma(2z)=\pi^{-1/2}2^{2z-1}\Gamma(z)\Gamma(z+1/2).
	\end{equation*}
Using the inequality (cf. \cite[(5.6.7)]{Olver2010Book}): for $b - a \ge  1$, $a\ge 0$, and $z  > 0$,
$$ \frac{\Gamma(z+a)}{\Gamma(z+b)}\le z^{a-b}, $$
we get
\begin{equation}\label{AsympGeg-CaseA-3}
 \frac{\Gamma(\nu+1)}{\Gamma(\nu+2\lambda)}=\frac{\Gamma\big((\nu-\lambda+3)+(\lambda-2)\big)}
 {\Gamma\big((\nu-\lambda+3)+(3\lambda-3)\big)}\le(\nu-\lambda+3)^{1-2\lambda}\le \frac{\lambda\,2^{\lambda/2+1}}{(\nu-\lambda+3)^{2\lambda-1}}.
\end{equation}
 Thus, from \eqref{defBS0B} and \eqref{AsympGeg-CaseA-2}-\eqref{AsympGeg-CaseA-3}, we derive  that for $\lambda\ge 2,$
 $$\lim\limits_{\theta\to \pi^-} |\widetilde{\mathcal R}_\nu^{(\lambda)}(\theta)|= 2^{2\lambda-1}|{\mathcal Q}_\nu^{(\lambda)}|\le \lim_{\theta\to \pi^-}{\widetilde {\mathcal S}^{(\lambda)}_\nu}(\theta).$$
% where we used
%$$\lim_{\theta\to \pi}\Big\{(\sin \varphi)^{2\lambda-1}\,\Big(\frac{1+\cos \varphi} 2\Big)^{1/2-\lambda}\Big\}\overset{\eta=\pi-\theta}{=}
%\lim_{\eta\to 0}\Big\{(\sin \eta)^{2\lambda-1}\,\sin ^{1-2\lambda}(\eta/2)\Big\}=2^{2\lambda-1}.$$
 This ends the proof.
%\end{proof}

\section{Some relevant  properties of GGF-Fs}\label{MiscProp}
\setcounter{equation}{0}
\setcounter{lmm}{0}
\setcounter{thm}{0}

 The GGF-Fs enjoy a rich collection of properties particularly in the fractional calculus  framework.
 In this section, we present assorted properties of GGF-Fs, and most of them follow directly from the properties of the hypergeometric functions.
These can  provide a better picture of this family of very useful special functions.
Recall  the definition  of  the  right-sided  Riemann-Liouville  fractional derivative of order $s>0$  (cf. \cite{Samko1993Book}):
\begin{equation}\label{leftintRL}
\begin{split}
%&{}_{-1} I_{x}^s\, u (x)=\frac 1 {\Gamma(s)}\int_{-1}^x \frac{u(y)}{(x-y)^{1-s}} dy; \quad
%\quad  a<x<b,
%{}_{x}I_{1}^s\, u (x)=\frac 1 {\Gamma(s)}\int_{x}^1 \frac{u(y)}{(y-x)^{1-s}} dy, \;\;\; x\in (-1,1).
%{}_{x}I_{1}^s\, u (x)=\frac 1 {\Gamma(s)}\int_{x}^1 \frac{u(y)}{(y-x)^{1-s}} dy; \;\;
{}_x^R D_{1}^s\, u(x)=(-1)^kD^k\big\{\,{}_xI_{1}^{k-s}\, u\big\}(x),\;\; s\in[k-1,k),
% \; x\in (-1,1).
\end{split}
\end{equation}
where $D^k$ with $k\in {\mathbb N}$ is the ordinary $k$th derivative, and ${}_xI_1^\mu $ is
the RL fractional derivative operator defined in \eqref{IsIntegral}.
%Recall the definition of  the  RL fractional derivatives (cf. \cite[P. 33, P. 44]{Samko1993Book}).
%	\begin{equation}\label{reprezentacja20s01}
%	{}_{-1}^{R} D_{x}^s\, u=%\frac {	D^k} {\Gamma(k-s)} \Big\{\int_{-1}^x \frac{u(y)}{(x-y)^{1-k+s}} dy\Big\}
%	D^k\big\{\, {}_{-1}I_{x}^{k-s}\, u\big\};\quad
%{}_x^R D_{1}^s\, u(x)=(-1)^kD^k\big\{\,{}_xI_{1}^{k-s}\, u\big\}.
%\end{equation}W
We have  the explicit formulas  $($cf.  \cite{Samko1993Book}$)$:   for  real $\eta>-1$ and $s>0,$
\begin{equation}\label{intformu}
{}_{x} I_{1}^s \, (1-x)^\eta=  \dfrac{\Gamma(\eta+1)}{\Gamma(\eta+s+1)} (1-x)^{\eta+s};\quad
{}_{x}^{R} D_{1}^s  \, (1-x)^\eta=\frac{\Gamma(\eta+1)}{\Gamma(\eta-s+1)} (1-x)^{\eta-s}.
\end{equation}
%Similar formulas are available for right-sided RL fractional integral/derivative of $(1-x)^\eta.$

\begin{prop} \label{PropA}  {\bf (see \cite[Thm. 3.1]{Liu2017arXiv}).}
For  real $\lambda>s-1/2,$ real $\nu\ge 0$ and $x\in(-1,1),$
		\begin{equation}\label{dFCI++}	
		{}_{x}^R D_{1}^{s}\big\{(1-x^2)^{\lambda-1/2}
		\,{}^{r\!}G_{\nu}^{(\lambda)}(x)\big\}
		=\frac{2^s \,\Gamma(\lambda+1/2)}{\Gamma(\lambda-s+1/2)}\,(1-x^2)^{\lambda-s-1/2}\, {}^{r\!}G_{\nu+s}^{(\lambda-s)}(x).
			\end{equation}
\end{prop}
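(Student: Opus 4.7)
The strategy is to reduce the fractional-derivative identity \eqref{dFCI++} to the fractional-integral identity \eqref{FCI++} by means of the definition \eqref{leftintRL}, thereby isolating the remaining work as an integer-order Rodrigues-type calculation that one then lifts to real fractional degree by analytic continuation. Writing $s\in[k-1,k)$ for some $k\in\mathbb{N}$, the definition \eqref{leftintRL} gives
\begin{equation*}
{}_x^R D_1^s\bigl\{(1-x^2)^{\lambda-1/2}\,{}^{r\!}G_\nu^{(\lambda)}(x)\bigr\} = (-1)^k D^k\Bigl\{{}_xI_1^{k-s}\bigl[(1-x^2)^{\lambda-1/2}\,{}^{r\!}G_\nu^{(\lambda)}(x)\bigr]\Bigr\}.
\end{equation*}
Since $k-s\in(0,1]$, the inner fractional integral is evaluated at once by applying \eqref{FCI++} with $k-s$ in the role of $s$:
\begin{equation*}
{}_xI_1^{k-s}[\cdots] = \frac{\Gamma(\lambda+1/2)}{2^{k-s}\,\Gamma(\lambda+k-s+1/2)}\,(1-x^2)^{\lambda+k-s-1/2}\,{}^{r\!}G_{\nu+s-k}^{(\lambda+k-s)}(x).
\end{equation*}

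What remains is to prove the purely integer-order identity
\begin{equation*}
(-1)^k D^k\bigl\{(1-x^2)^{\mu-1/2}\,{}^{r\!}G_{\tilde\nu}^{(\mu)}(x)\bigr\} = \frac{2^k\,\Gamma(\mu+1/2)}{\Gamma(\mu-k+1/2)}\,(1-x^2)^{\mu-k-1/2}\,{}^{r\!}G_{\tilde\nu+k}^{(\mu-k)}(x),
\end{equation*}
and to apply it with $\mu=\lambda+k-s$ and $\tilde\nu=\nu+s-k$. Once this is in place, the two prefactors collapse by
\begin{equation*}
\frac{\Gamma(\lambda+1/2)}{2^{k-s}\Gamma(\lambda+k-s+1/2)}\cdot\frac{2^k\,\Gamma(\lambda+k-s+1/2)}{\Gamma(\lambda-s+1/2)} = \frac{2^s\,\Gamma(\lambda+1/2)}{\Gamma(\lambda-s+1/2)},
\end{equation*}
and the parameter shifts line up to reproduce the right-hand side of \eqref{dFCI++}.

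I would establish the integer-order identity by first settling the case $\tilde\nu=n\in\mathbb{N}_0$: through the identification \eqref{obsvers0} this reduces to differentiating the symmetric Jacobi-polynomial Rodrigues formula $k$ times and applying Rodrigues again to rewrite the resulting $(n+k)$-th derivative, a standard calculation. The extension to real $\tilde\nu\ge 0$ then follows by analytic continuation: both sides of the identity depend real-analytically on $\tilde\nu$ for each fixed $x\in(-1,1)$, since the hypergeometric series \eqref{rgjfdef} involves $\tilde\nu$ only through the Pochhammer factors $(-\tilde\nu)_j$ and $(\tilde\nu+2\mu)_j$, which are entire in $\tilde\nu$, and the prefactor $(1-x^2)^{\mu-1/2}$ is $\tilde\nu$-independent.

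The main obstacle is a mild bookkeeping point rather than a computational one: the intermediate application of \eqref{FCI++} with parameter $k-s$ formally requires $\nu\ge k-s$, which the small-$\nu$ range $0\le\nu<k-s$ narrowly violates. This is easily handled by performing the proof for $\nu\ge k-s$ and then extending both sides of \eqref{dFCI++} to all $\nu\ge 0$ by a second analyticity-in-$\nu$ argument (again using the entire dependence of \eqref{rgjfdef} on $\nu$). The only substantive verification is thus the Gamma-function bookkeeping in the Rodrigues step, which is routine.
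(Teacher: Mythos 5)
First, a point of comparison: the paper does not actually prove Proposition \ref{PropA} --- it is quoted from \cite[Thm.~3.1]{Liu2017arXiv} --- so there is no in-paper argument to measure your proposal against. Judged on its own terms, your reduction is structurally sound: splitting ${}_x^RD_1^s$ as $(-1)^kD^k\circ{}_xI_1^{k-s}$, evaluating the inner fractional integral by \eqref{FCI++} with order $k-s\in(0,1]$, and then performing $k$ integer-order Rodrigues-type differentiations does produce exactly the parameter shifts $\lambda\mapsto\lambda-s$, $\nu\mapsto\nu+s$, and your Gamma-function and power-of-two bookkeeping is correct.

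The genuine gap is in how you establish the integer-order identity for \emph{real} degree $\tilde\nu$. You prove it for $\tilde\nu=n\in\mathbb{N}_0$ via \eqref{obsvers0} and then assert that it extends to all real $\tilde\nu\ge0$ "by analytic continuation" because both sides are (real-)analytic in $\tilde\nu$. That inference is invalid: two analytic functions agreeing on $\mathbb{N}_0$ need not agree anywhere else, since $\mathbb{N}_0$ has no accumulation point; $\sin(\pi\tilde\nu)$ is the standard counterexample, and factors of exactly this form are ubiquitous in this subject (cf.\ \eqref{defBS0B}), so the danger is not hypothetical. Making the step rigorous would require a growth condition in the spirit of Carlson's theorem, which you do not supply. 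The repair is easy and stays inside the paper's toolkit: apply \eqref{FCI++} with $s=1$, i.e.\ ${}_xI_1^1\big\{(1-x^2)^{\lambda-1/2}\,{}^{r\!}G_\nu^{(\lambda)}(x)\big\}=\tfrac{1}{2\lambda+1}(1-x^2)^{\lambda+1/2}\,{}^{r\!}G_{\nu-1}^{(\lambda+1)}(x)$, and differentiate once in $x$ to obtain
\begin{equation*}
\frac{d}{dx}\Big\{(1-x^2)^{\mu-1/2}\,{}^{r\!}G_{\tilde\nu}^{(\mu)}(x)\Big\}=-(2\mu-1)\,(1-x^2)^{\mu-3/2}\,{}^{r\!}G_{\tilde\nu+1}^{(\mu-1)}(x)
\end{equation*}
for all real $\tilde\nu\ge0$ and $\mu>1/2$; iterating $k$ times (all intermediate parameters exceed $1/2$ precisely because $\lambda>s-1/2$) yields your integer-order identity with no continuation argument at all. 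Your second continuation step, from $\nu\ge k-s$ to all $\nu\ge0$, is different in kind --- there the two sides agree on an interval, which does have accumulation points --- but you should still justify why the left-hand side is analytic in $\nu$ (differentiation under the integral sign with locally uniform bounds on the entire $\nu$-dependence of \eqref{rgjfdef}).
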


Note that we just list the properties for the right GGF-F ${}^{r\!}G_{\nu}^{(\lambda)}(x),$
but similar formulas are valid for the left GGF-F ${}^{l}G_{\nu}^{(\lambda)}(x)$ (cf.
\eqref{lgjfdef}) under the left RL fractional derivative (cf. \cite{Liu2017arXiv}).

As a generalization of Gegenbauer polynomials, the GGF-Fs satisfy the following fractional  Rodrigues' formula.
\begin{prop} \label{PropRGGF}  	 For real $\lambda>-1/2 $ and real   $\nu\ge 0,$  the GGF-Fs defined in  \eqref{rgjfdef} satisfy
 %the right GGF-F
 % satisfy the fractional  Rodrigues' formula:
\begin{equation}\begin{split}
	\label{PropRGGF-0} %
	{}^{r\!}G_\nu^{(\lambda)}(x) & =
	\frac{\Gamma(\lambda+1/2)} {2^\nu \,\Gamma(\nu+\lambda+1/2)} (1-x^2)^{-\lambda+1/2} \,
 {}_{x}^{R} D_{1}^\nu  \big\{  (1-x^2)^{\nu+\lambda-1/2}  \big\}.
% \\&=\frac{\Gamma(\lambda+1/2)\Gamma(\nu+1)}{2^{\nu}\Gamma(\nu+\lambda+1/2)} \sum_{k=0}^{\infty}	 \binom{\nu+\lambda-1/2}{\nu-k} \binom{\nu+\lambda-1/2}{k} (x-1)^{k}(1+x)^{\nu-k}.
	\end{split}\end{equation}
%	and
%	\begin{align}
%	\label{LJacobiR0} %
%	{}^{l\!}G_\nu^{(\lambda)}(x)  =(-1)^{[\nu]}
%\frac{\Gamma(\lambda+1/2)}{2^{\nu}\Gamma(\nu+1/2)}  (1-x^2)^{-\lambda+1/2}\,
%{}_{-1}^{R} D_{x}^\nu  \big\{  (1-x^2)^{\nu+\lambda-1/2}  \big\},
%	\end{align}
\end{prop}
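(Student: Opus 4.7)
The plan is to obtain \eqref{PropRGGF-0} as an immediate corollary of Proposition \ref{PropA} by a judicious relabeling of parameters. Specifically, I would apply the fractional derivative identity \eqref{dFCI++} after replacing $\lambda$ by $\lambda+\nu$, taking $s=\nu$, and choosing the interior degree to be $0$. The admissibility condition in Proposition \ref{PropA}, namely $(\lambda+\nu)>\nu-1/2$, reduces precisely to $\lambda>-1/2$, which is the standing hypothesis.

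The crucial simplification is that ${}^{r\!}G_0^{(\mu)}(x)\equiv 1$ for every admissible $\mu$: indeed, setting $\nu=0$ in the hypergeometric series \eqref{rgjfdef} kills every $j\ge 1$ term via the Pochhammer factor $(0)_j=0$, so only the constant term survives. Under this specialization, the left-hand side of \eqref{dFCI++} collapses to ${}_{x}^R D_{1}^{\nu}\{(1-x^2)^{\nu+\lambda-1/2}\}$, while the right-hand side becomes
\begin{equation*}
\frac{2^{\nu}\,\Gamma(\lambda+\nu+1/2)}{\Gamma(\lambda+1/2)}\,(1-x^2)^{\lambda-1/2}\,{}^{r\!}G_{\nu}^{(\lambda)}(x).
\end{equation*}
Dividing through by the gamma/power prefactor and multiplying by $(1-x^2)^{-\lambda+1/2}$ (which is pointwise well-defined on $(-1,1)$) then yields \eqref{PropRGGF-0} verbatim.

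I do not anticipate any serious obstacle: the argument is purely algebraic once Proposition \ref{PropA} is in hand, and the only minor thing to verify is that the substitution $s=\nu$ with general real $\nu\ge 0$ is legitimate in \eqref{dFCI++} — which it is, since the right-sided RL derivative ${}_{x}^R D_{1}^{\nu}$ is defined through \eqref{leftintRL} for any $\nu\ge 0$, and the combination $(1-x^2)^{\nu+\lambda-1/2}$ lies in the domain of this operator by \eqref{intformu}. As a sanity check, when $\nu=n\in\mathbb{N}_0$ the fractional derivative reduces to the classical $n$-th derivative (up to the sign $(-1)^n$ built into ${}_{x}^R D_{1}^{n}$), and \eqref{PropRGGF-0} specializes via \eqref{obsvers0} to the familiar Rodrigues formula for Gegenbauer polynomials $G_n^{(\lambda)}$, confirming that the constants match.
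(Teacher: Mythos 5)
Your proposal is correct and is essentially identical to the paper's own proof: the authors likewise substitute $\nu\to 0$, $\lambda\to\nu+\lambda$, $s\to\nu$ in \eqref{dFCI++}, use ${}^{r\!}G_0^{(\nu+\lambda)}\equiv 1$, and rearrange the prefactors to obtain \eqref{PropRGGF-0}. The admissibility check $\lambda>-1/2$ and the spelled-out justification that the constant term is all that survives of the hypergeometric series are fine.
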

\begin{proof}	%Recall that	 the right GGF-Fs satisfy the RL fractional derivative  formula:
	Substituting $\nu, \lambda, s$ in \eqref{dFCI++} by $0, \nu+\lambda, \nu,$ respectively, yields
%	Take $\nu\to 0,$ $\lambda\to \nu+\lambda$ and $s\to \nu$ in \eqref{dFCI++}, we have
		\begin{equation*}
	{}_{x}^R D_{1}^{\nu}\big\{(1-x^2)^{\nu+\lambda-1/2}
	\big\}
	=\frac{2^\nu \,\Gamma(\nu+\lambda+1/2)}{\Gamma(\lambda+1/2)}\,(1-x^2)^{\lambda-1/2}\, {}^{r\!}G_{\nu}^{(\lambda)}(x),\\
	\end{equation*}
	which implies   \eqref{PropRGGF-0}.    %that
%\begin{equation}\label{PropRGGF-1}
%{}^{r\!}G_\nu^{(\lambda)}(x)  =
%\frac{\Gamma(\lambda+1/2)} {2^\nu \,\Gamma(\nu+\lambda+1/2)} (1-x^2)^{-\lambda+1/2}\,
%{}_{x}^{R} D_{1}^\nu  \big\{  (1-x^2)^{\nu+\lambda-1/2}  \big\}.
%\end{equation}
\end{proof}
\begin{rem}\label{correctA} {\em Mirevski et al \cite[Definition 9]{Mirevski2007AMC} defined  the {\rm(}generalized or{\rm)} $g$-Jacobi function through the (fractional) Rodrigues' formula
		and derived  an equivalent representation in terms of the hypergeometric function {\rm(}cf.
\cite[Thm. 12]{Mirevski2007AMC}{\rm).} However, we point out that
the left RL fractional derivative operator ${}_{0}^{R} D_{x}^\nu$ therein should be  replaced by the right RL fractional derivative operator
${}_{x}^{R} D_{1}^\nu$ as in \eqref{PropRGGF-0}. Then the flaws in the derivation of \cite[Thm. 12]{Mirevski2007AMC}
can be fixed accordingly. }
%In \cite[Definition 9]{Mirevski2007AMC},  the definition of GGF-Fs  is based on the following  . However, the RL fractional derivative operator
\end{rem}

\begin{prop} \label{PropRGGFB}  	 For real $\lambda>-1/2 $ and real   $\nu\ge 0,$
the GGF-Fs have the series representation:
\begin{equation}\begin{split}
	\label{PropRGGF-1} %
	{}^{r\!}G_\nu^{(\lambda)}(x)
 &=\frac{\Gamma(\lambda+1/2)\Gamma(\nu+1)}{2^{\nu}\Gamma(\nu+\lambda+1/2)} \sum_{k=0}^{\infty}	 \binom{\nu+\lambda-1/2}{\nu-k} \binom{\nu+\lambda-1/2}{k} (x-1)^{k}(1+x)^{\nu-k}.
	\end{split}\end{equation}
%	and
%	\begin{align}
%	\label{LJacobiR0} %
%	{}^{l\!}G_\nu^{(\lambda)}(x)  =(-1)^{[\nu]}
%\frac{\Gamma(\lambda+1/2)}{2^{\nu}\Gamma(\nu+1/2)}  (1-x^2)^{-\lambda+1/2}\,
%{}_{-1}^{R} D_{x}^\nu  \big\{  (1-x^2)^{\nu+\lambda-1/2}  \big\},
%	\end{align}
\end{prop}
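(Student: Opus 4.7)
The plan is to derive \eqref{PropRGGF-1} by a direct manipulation of the hypergeometric series that defines ${}^{r\!}G_\nu^{(\lambda)}$. The key tool is Pfaff's transformation (cf.\ \cite[(15.3.5)]{Abramowitz1972Book}),
\[
{}_2F_1(a,b;c;z) = (1-z)^{-a}\, {}_2F_1\!\Big(a,\, c-b;\, c;\, \tfrac{z}{z-1}\Big),
\]
which, applied to \eqref{rgjfdef} with $a=-\nu$, $b=\nu+2\lambda$, $c=\lambda+1/2$ and $z=(1-x)/2$ (so that $(1-z)^{-a}=((1+x)/2)^\nu$, $c-b=\tfrac12-\nu-\lambda$, and $z/(z-1)=(x-1)/(x+1)$), recasts
\[
{}^{r\!}G_\nu^{(\lambda)}(x) = \Big(\tfrac{1+x}{2}\Big)^\nu\, {}_2F_1\!\Big(-\nu,\, \tfrac12-\nu-\lambda;\, \lambda+\tfrac12;\, \tfrac{x-1}{x+1}\Big).
\]

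Expanding this right-hand ${}_2F_1$ as a power series in $(x-1)/(x+1)$ and absorbing the prefactor $((1+x)/2)^\nu$ into $(x+1)^{-k}$ inside each term yields
\[
{}^{r\!}G_\nu^{(\lambda)}(x) = 2^{-\nu}\sum_{k=0}^\infty \frac{(-\nu)_k\,(\tfrac12-\nu-\lambda)_k}{k!\,(\lambda+\tfrac12)_k}\,(x-1)^k(x+1)^{\nu-k}.
\]
Converting the three Pochhammer symbols to Gamma functions via $(-\nu)_k=(-1)^k\Gamma(\nu+1)/\Gamma(\nu-k+1)$, $(\tfrac12-\nu-\lambda)_k=(-1)^k\Gamma(\nu+\lambda+\tfrac12)/\Gamma(\nu+\lambda+\tfrac12-k)$ and $(\lambda+\tfrac12)_k=\Gamma(\lambda+\tfrac12+k)/\Gamma(\lambda+\tfrac12)$, the two minus signs cancel, and a routine regrouping of Gamma factors turns the $k$-th coefficient into
\[
\frac{\Gamma(\lambda+\tfrac12)\,\Gamma(\nu+1)}{2^\nu\,\Gamma(\nu+\lambda+\tfrac12)}\binom{\nu+\lambda-1/2}{\nu-k}\binom{\nu+\lambda-1/2}{k},
\]
which is precisely the coefficient in \eqref{PropRGGF-1}.

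The principal technical point is convergence. Pfaff's identity in its elementary form requires $|z/(z-1)|<1$, i.e.\ $\Re z<\tfrac12$, which for $z=(1-x)/2$ restricts to $x\in(0,1)$; on this half-interval the derivation above is fully rigorous, since the coefficient asymptotics $\binom{\nu+\lambda-1/2}{\nu-k}\binom{\nu+\lambda-1/2}{k}=O(k^{-2\nu-2\lambda-1})$ combined with $|(x-1)/(x+1)|<1$ guarantee absolute convergence. For $x\in(-1,0]$ with non-integer $\nu$ the series on the right of \eqref{PropRGGF-1} does not converge in the classical sense and must be read as the analytic continuation in $x$ of its restriction to $(0,1)$; since ${}^{r\!}G_\nu^{(\lambda)}(x)$ is analytic on $(-1,1)$ and the two sides agree on $(0,1)$, they coincide identically. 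For $\nu=n\in\mathbb{N}_0$ the sum terminates and one recovers the classical representation of the Jacobi polynomial $P_n^{(\lambda-1/2,\lambda-1/2)}$ \cite[(18.5.8)]{Olver2010Book}, providing a consistency check.
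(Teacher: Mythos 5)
Your argument is correct, but it is not the route the paper's proof takes: the paper derives \eqref{PropRGGF-1} from the fractional Rodrigues formula \eqref{PropRGGF-0} of Proposition \ref{PropRGGF}, expanding ${}_{x}^{R}D_{1}^{\nu}\{(1-x^2)^{\nu+\lambda-1/2}\}$ by the fractional Leibniz rule and then rewriting the resulting Gamma-function ratios as the two binomial coefficients. What you do instead --- applying Pfaff's transformation directly to the hypergeometric definition \eqref{rgjfdef} and converting the Pochhammer symbols --- is exactly the alternative derivation the authors themselves flag in Remark \ref{alternativeS}, so in that sense your proof is anticipated, though not carried out, in the paper. Your computations check out: the parameters $a=-\nu$, $c-b=\tfrac12-\nu-\lambda$, the argument $(x-1)/(x+1)$, and the Pochhammer-to-Gamma conversions all reproduce the stated coefficient. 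The main thing your write-up adds over the paper's proof is the discussion of convergence: you correctly observe that $|(x-1)/(x+1)|<1$ only for $x\in(0,1)$, that the series in \eqref{PropRGGF-1} therefore diverges classically on $(-1,0)$ for non-integer $\nu$, and that the identity there must be understood via analytic continuation from $(0,1)$ (or as a terminating sum when $\nu\in\mathbb{N}_0$). The paper's fractional-Leibniz derivation is entirely formal on this point (term-by-term application of \eqref{intformu} inside an infinite sum with no convergence justification), so your treatment is, if anything, the more rigorous of the two; the trade-off is that the paper's route makes the link between the series representation and the Rodrigues formula explicit, which is part of the expository purpose of Section \ref{MiscProp}.
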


\begin{proof}
	% We just show \eqref{RJacobiR0} as the other formula can be proved similarly.
	Using the fractional Leibniz rule (cf. \cite[(2.202)]{Podlubny1999Book}), we obtain from
\eqref{intformu} that
	\begin{align*}
	\label{RJacobiR1}
	\begin{split}
	%{}^{r\!}P_\nu^{(\alpha,\beta)}(x)
{}_{x}^{R} D_{1}^\nu  \big\{  (1-x^2)^{\nu+\lambda-1/2}  \big\}&	=
	\sum_{k=0}^{\infty} \binom{\nu}{k}   \,{}_{x}^{R} D_{1}^{\nu-k} (1-x)^{\nu+\lambda-1/2}\,
	(-1)^kD^k  (1+x)^{\nu+\lambda-1/2}
	\\
%	& = \sum_{k=0}^{\infty}
%	\frac{\Gamma^2(\nu+\lambda+1/2)}{\Gamma(k+\lambda+1/2)}    \frac{\Gamma(\nu+1)(1-x^2)^{\lambda-1/2}}{\Gamma(\nu-k+\lambda+1/2)}
%	\frac{(x-1)^{k}
%		(1+x)^{\nu-k}  }{\Gamma(\nu-k+1)\Gamma(k+1)},
	& =(1-x^2)^{\lambda-1/2}\sum_{k=0}^{\infty} \binom{\nu}{k}
	\frac{\Gamma^2(\nu+\lambda+1/2) (x-1)^{k}(1+x)^{\nu-k}}{\Gamma(k+\lambda+1/2)\Gamma(\nu-k+\lambda+1/2)}.
	\end{split}
	\end{align*}
	%where we used the basic fractional derivative formula  \eqref{intformu}
%		\begin{equation}\label{fomula1}
%		{}_{x}^{R} D_{1}^{\nu-k} (1-x)^{\nu+\lambda-1/2}=\frac{\Gamma(\nu+\lambda+1/2)}{\Gamma(k+\lambda+1/2)} (1-x)^{k+\lambda-1/2},
%				\end{equation}
%			\begin{equation}\label{fomula2}
%	 D^k  (1+x)^{\nu+\lambda-1/2}=\frac{\Gamma(\nu+\lambda+1/2)}{\Gamma(\nu-k+\lambda+1/2)}(1+x)^{\nu-k+\lambda-1/2}.
%		\end{equation}
	Recall the definition of the binomial coefficient
		\begin{equation*}\label{fomula3}
		\binom{\nu}{k}=\frac{\Gamma(\nu+1)}{\Gamma(\nu-k+1)\Gamma(k+1)}.
		\end{equation*}
Thus,	we have
		\begin{equation*}\label{fomula4}
\binom{\nu+\lambda-1/2}{\nu-k} =
	\frac{\Gamma(\nu+\lambda+1/2)}{\Gamma(k+\lambda+1/2)\Gamma(\nu-k+1)},
\;\; \binom{\nu+\lambda-1/2}{k}=    \frac{\Gamma(\nu+\lambda+1/2) }{\Gamma(\nu-k+\lambda+1/2)\Gamma(k+1)}.
			\end{equation*}
	%		and
%		\begin{equation}\label{fomula5}
%\binom{\nu+\lambda-1/2}{k}=    \frac{\Gamma(\nu+\lambda+1/2) }{\Gamma(\nu-k+\lambda+1/2)\Gamma(k+1)}.
%\end{equation}
Then  \eqref{PropRGGF-1} follows from the above.
\end{proof}
\begin{rem}\label{alternativeS}{\em Alternatively, we can derive  \eqref{PropRGGF-1}
from \eqref{hyperboscs}, Definition \ref{defnGGG}, and the Pfaff's formula {\rm(}cf. \cite[Theorem 2.2.5]{Andrews1999Book}{\rm):}
$${}_2F_1(a,b;c; z)=(1-z)^{-a}{}_2F_1(a,c-b; c; z/(1-z)). $$}
\end{rem}

We next  present some recurrence relations that generalize the corresponding formulas for
the Gegenbauer polynomials.
\begin{prop}\label{ThreeTermRelation} For
real $\lambda> -1/2,$
	the  GGF-Fs satisfy the recurrence formulas
\begin{equation}	\begin{split}\label{ThreeTermRelation-0}
(\nu+2\lambda)\,{}^{r\!}G_{\nu+1}^{(\lambda)}(x)=2(\nu+\lambda)\,x\,{}^{r\!}G_\nu^{(\lambda)}(x)-\nu \,{}^{r\!}G_{\nu-1}^{(\lambda)}(x),\;\; \nu \ge 1,
\end{split}\end{equation}
and
	\begin{equation}	\begin{split}\label{ThreeTermRelationB-0}
&{}^{r\!}G_{\nu}^{(\lambda)}(x)=x\,{}^{r\!}G_{\nu-1}^{(\lambda+1)}(x)-\frac{(\nu-1)(\nu+2\lambda+1)}{4(\lambda+1/2)(\lambda+3/2)}(1-x^2)\,{}^{r\!}G_{\nu-2}^{(\lambda+2)}(x),
\;\;\nu\ge 2.
\end{split}\end{equation}
\end{prop}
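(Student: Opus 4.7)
I treat the two recurrences by quite different devices, neither of which requires going beyond the material already at hand.

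For the three-term recurrence \eqref{ThreeTermRelation-0}, my route is through the integral representation of Lemma \ref{importformLa}. With $x=\cos\varphi$, multiply both sides by $(\sin\varphi)^{2\lambda-1}$ and collect the three resulting integrals into a single one over $(0,\varphi)$ against the weight $(\cos\vartheta-\cos\varphi)^{\lambda-1}$; the combined numerator is
\[
N(\vartheta):=(\nu+2\lambda)\cos((\nu+\lambda+1)\vartheta)-2(\nu+\lambda)\cos\varphi\cos((\nu+\lambda)\vartheta)+\nu\cos((\nu+\lambda-1)\vartheta).
\]
Applying the elementary identities
\[
\cos((\nu+\lambda\pm 1)\vartheta)=\cos((\nu+\lambda)\vartheta)\cos\vartheta\mp\sin((\nu+\lambda)\vartheta)\sin\vartheta,
\]
$N(\vartheta)$ collapses to $2(\nu+\lambda)(\cos\vartheta-\cos\varphi)\cos((\nu+\lambda)\vartheta)-2\lambda\sin\vartheta\sin((\nu+\lambda)\vartheta)$. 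The first summand absorbs one factor of the weight, leaving the clean integral of $(\cos\vartheta-\cos\varphi)^{\lambda}\cos((\nu+\lambda)\vartheta)$. The second is handled by integration by parts using $\sin\vartheta(\cos\vartheta-\cos\varphi)^{\lambda-1}d\vartheta=-\lambda^{-1}d\bigl[(\cos\vartheta-\cos\varphi)^\lambda\bigr]$; the boundary contributions vanish at $\vartheta=\varphi$ (from $(\cos\vartheta-\cos\varphi)^\lambda$ when $\lambda>0$) and at $\vartheta=0$ (from $\sin((\nu+\lambda)\vartheta)$), and the residual integral exactly cancels the first. This proves \eqref{ThreeTermRelation-0} for $\lambda>0$; the range $-1/2<\lambda\le 0$ follows by analytic continuation in $\lambda$, since \eqref{rgjfdef} defines each $\,{}^{r\!}G_\nu^{(\lambda)}(x)$ as a real-analytic function of $\lambda$ on $\lambda>-1/2$ and the coefficients of the recurrence are polynomial in $\lambda$.

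For the second recurrence \eqref{ThreeTermRelationB-0}, the key observation is that the simultaneous shift $(\nu,\lambda)\mapsto(\nu-1,\lambda+1)$ corresponds exactly to one differentiation. Applying $\tfrac{d}{dz}{}_2F_1(a,b;c;z)=(ab/c){}_2F_1(a+1,b+1;c+1;z)$ once and twice with $a=-\nu$, $b=\nu+2\lambda$, $c=\lambda+1/2$, $z=(1-x)/2$ (so $d/dz=-2\,d/dx$) produces
\[
{}^{r\!}G_{\nu-1}^{(\lambda+1)}(x)=\frac{2\lambda+1}{\nu(\nu+2\lambda)}\bigl({}^{r\!}G_\nu^{(\lambda)}\bigr)'(x),\;\;
{}^{r\!}G_{\nu-2}^{(\lambda+2)}(x)=\frac{(2\lambda+1)(2\lambda+3)}{\nu(\nu-1)(\nu+2\lambda)(\nu+2\lambda+1)}\bigl({}^{r\!}G_\nu^{(\lambda)}\bigr)''(x).
\]
Substituting these into the right-hand side of \eqref{ThreeTermRelationB-0} and clearing the common factor $1/[\nu(\nu+2\lambda)]$, the claim reduces to the Gegenbauer-type identity
\[
(1-x^2)\bigl({}^{r\!}G_\nu^{(\lambda)}\bigr)''(x)-(2\lambda+1)\,x\,\bigl({}^{r\!}G_\nu^{(\lambda)}\bigr)'(x)+\nu(\nu+2\lambda)\,{}^{r\!}G_\nu^{(\lambda)}(x)=0,
\]
which is the hypergeometric ODE $z(1-z)F''+(c-(a+b+1)z)F'-abF=0$ with the parameters above transported by $x=1-2z$, and is therefore satisfied by $\,{}^{r\!}G_\nu^{(\lambda)}$ by construction.

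The step I expect to be the most delicate is the $\lambda$-dependence in the integral-representation argument for \eqref{ThreeTermRelation-0}: Lemma \ref{importformLa} is geared to $\lambda\ge 0$, and the boundary terms in the integration by parts vanish only when $\lambda>0$. Dispatching the residual strip $\lambda\in(-1/2,0]$ by analytic continuation is conceptually straightforward, but a purely algebraic proof valid for every $\lambda>-1/2$ would instead require verifying the recurrence coefficient-by-coefficient on the power series \eqref{rgjfdef} via Gauss' contiguous relations. All remaining ingredients are elementary trigonometry and calculus.
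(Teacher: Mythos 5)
Your proof is correct, but it takes a genuinely different route from the paper's. The paper disposes of both identities by a one-line substitution ($a=-\nu$, $b=\nu+2\lambda$, $c=\lambda+1/2$, $z=(1-x)/2$) into two quoted contiguous relations for ${}_2F_1$, namely \cite[(2.5.15)]{Andrews1999Book} for \eqref{ThreeTermRelation-0} and \cite[(2.5.2)]{Andrews1999Book} for \eqref{ThreeTermRelationB-0}; this is immediate and valid for all $\lambda>-1/2$ at once. Your argument for \eqref{ThreeTermRelation-0} instead pushes the Chebyshev-type identity $\cos((m+1)\vartheta)+\cos((m-1)\vartheta)=2\cos\vartheta\cos(m\vartheta)$ through the integral representation \eqref{IntRep}: the reduction of $N(\vartheta)$ is right, the integration by parts via $d[(\cos\vartheta-\cos\varphi)^{\lambda}]=-\lambda\sin\vartheta(\cos\vartheta-\cos\varphi)^{\lambda-1}d\vartheta$ cancels the leading term exactly, and the boundary terms vanish precisely when $\lambda>0$ as you note. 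Your continuation to $-1/2<\lambda\le 0$ is legitimate, since for fixed $x\in(-1,1)$ the series \eqref{rgjfdef} converges locally uniformly in $\lambda$ on $\{\lambda>-1/2\}$ and the recurrence defect is therefore (real-)analytic there and vanishes on $(0,\infty)$. What this buys is a self-contained proof resting only on Lemma \ref{importformLa} and elementary calculus, together with a transparent explanation of \emph{why} the classical three-term recurrence survives fractional degree; the cost is the extra continuation step and the restriction $\nu\ge 1$ needed to invoke \eqref{IntRep} for ${}^{r\!}G_{\nu-1}^{(\lambda)}$ (which the statement already imposes). For \eqref{ThreeTermRelationB-0} your route via the derivative relation \eqref{dProperties-3} applied twice, followed by the Sturm--Liouville equation \eqref{2ndDE-B} (note $4(\lambda+1/2)(\lambda+3/2)=(2\lambda+1)(2\lambda+3)$, and all the divisors $\nu$, $\nu-1$, $\nu+2\lambda$, $\nu+2\lambda+1$ are nonzero for $\nu\ge2$, $\lambda>-1/2$) is in substance a rederivation of the contiguous relation the paper quotes, since \cite[(2.5.2)]{Andrews1999Book} is exactly the hypergeometric ODE rewritten through the derivative identity; so the two proofs of the second formula differ only in whether that relation is quoted or reconstructed.
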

\begin{proof}
Recall the formula (cf. \cite[(2.5.15)]{Andrews1999Book}):
\begin{equation}	\begin{split}\label{ThreeTermRelation-1}
&2b(c-a)(b-a-1)\,{}_2F_1(a-1,b+1;c;z)\\
&\quad -\big((1-2z)(b-a-1)_3+(b-a)(b+a-1)(2c-b-a-1)\big)\, {}_2F_1(a,b;c;z)\\
&\quad\quad \quad-2a(b-c)(b-a+1)\,{}_2F_1(a+1,b-1;c;z)=0.
\end{split}\end{equation}
Substituting $a,$ $b,$ $c$ and $z$ in \eqref{ThreeTermRelation-1} by $-\nu,$ $ \nu+2\lambda,$ $\lambda+1/2$ and $(1-x)/2$, respectively, and using the definition \eqref{rgjfdef}, we obtain %
\begin{equation}	\begin{split}\label{ThreeTermRelation-2}
&2(\nu+2\lambda)(\nu+\lambda+1/2)(2\nu+2\lambda-1)\,{}^{r\!}G_{\nu+1}^{(\lambda)}(x)-(2\nu+2\lambda-1)_3\,x\,{}^{r\!}G_\nu^{(\lambda)}(x)\\
&\quad\quad +2\nu (\nu+\lambda+1/2)(2\nu+2\lambda-1)\,{}^{r\!}G_{\nu-1}^{(\lambda)}(x)=0,
\end{split}\end{equation}
which implies  \eqref{ThreeTermRelation-0}.
%\begin{equation}	\begin{split}\label{ThreeTermRelation-3}
%(\nu+2\lambda)\,{}^{r\!}G_{\nu+1}^{(\lambda)}(x)=2(\nu+\lambda)\,x\,{}^{r\!}G_\nu^{(\lambda)}(x)-\nu \,{}^{r\!}G_{\nu-1}^{(\lambda)}(x)=0,
%\end{split}\end{equation}

	Recall  (cf. \cite[(2.5.2)]{Andrews1999Book})
	\begin{equation}	\begin{split}\label{ThreeTermRelationB-1}
	&z(1-z)\frac{(a+1)(b+1)}{c(c+1)}{}_2F_1(a+2,b+2;c+2;z)\\
	&\quad +\frac{(c-(a+b+1)z)}{c}{}_2F_1(a+1,b+1;c+1;z)-{}_2F_1(a,b;c;z)=0.
	\end{split}\end{equation}
	Substituting $a,$ $b,$ $c$ and $z$ in \eqref{ThreeTermRelationB-1} by $-\nu,$ $ \nu+2\lambda,$ $\lambda+1/2$ and $(1-x)/2$, respectively, leads to
	%Set $a=-\nu,$ $b= \nu+2\lambda$ $c=\lambda+1/2$ and $z=(1-x)/2$ in \eqref{ThreeTermRelationB-1} with \eqref{rgjfdef}, we have %
	\begin{equation*}	\begin{split}%\label{ThreeTermRelationB-2}
&{}^{r\!}G_{\nu}^{(\lambda)}(x)=x\,{}^{r\!}G_{\nu-1}^{(\lambda+1)}(x)-\frac{(\nu-1)(\nu+2\lambda+1)}{4(\lambda+1/2)(\lambda+3/2)}(1-x^2)\,{}^{r\!}G_{\nu-2}^{(\lambda+2)}(x).
\end{split}\end{equation*}
This completes the proof.
\end{proof}

\begin{prop}\label{SturmLiu} For real $\lambda>-1/2$ and real $\nu\ge 0,$ the  GGF-Fs
	satisfy the Sturm-Liouville equation
\begin{equation}\label{2ndDE-B}
(1-x^2)(	{}^{r\!}G_\nu^{(\lambda)}(x))''-(2\lambda+1) x\, (	 {}^{r\!}G_\nu^{(\lambda)}(x))'+\nu(\nu+2\lambda)\,	{}^{r\!}G_\nu^{(\lambda)}(x)=0,
 \end{equation}
or equivalently,
\begin{equation}\label{2ndDE2-B}
\big\{(1-x^2)^{\lambda+1/2}\,(	 {}^{r\!}G_\nu^{(\lambda)}(x))'\big\}'+\nu(\nu+2\lambda)(1-x^2)^{\lambda-1/2}\,  	 {}^{r\!}G_\nu^{(\lambda)}(x)=0.
\end{equation}
\end{prop}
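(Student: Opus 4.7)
The plan is to recognize that ${}^{r\!}G_\nu^{(\lambda)}(x)$ is, by Definition \ref{defnGGG}, a specialization of the Gauss hypergeometric function, and then transport the classical hypergeometric ODE back to the $x$-variable. Specifically, with $a=-\nu$, $b=\nu+2\lambda$, $c=\lambda+\tfrac12$ and the substitution $z=(1-x)/2$, the function $w(z)={}_2F_1(a,b;c;z)$ satisfies the well-known hypergeometric equation
\begin{equation*}
z(1-z)w''(z)+\bigl(c-(a+b+1)z\bigr)w'(z)-ab\,w(z)=0,
\end{equation*}
which may be taken as a standard fact (see, e.g., \cite{Andrews1999Book}).

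Next, I would compute the coefficients under the substitution: $z(1-z)=(1-x^2)/4$, $a+b+1=2\lambda+1$ so that
\begin{equation*}
c-(a+b+1)z=\Bigl(\lambda+\tfrac12\Bigr)-(2\lambda+1)\frac{1-x}{2}=\frac{(2\lambda+1)x}{2},
\end{equation*}
and $ab=-\nu(\nu+2\lambda)$. Then the chain rule with $dz/dx=-1/2$ gives $w'(z)=-2\,\bigl({}^{r\!}G_\nu^{(\lambda)}\bigr)'(x)$ and $w''(z)=4\,\bigl({}^{r\!}G_\nu^{(\lambda)}\bigr)''(x)$. Plugging these into the hypergeometric ODE and simplifying yields exactly \eqref{2ndDE-B}.

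Finally, to pass to the self-adjoint form \eqref{2ndDE2-B}, I would multiply \eqref{2ndDE-B} by the weight $(1-x^2)^{\lambda-1/2}$ and observe the identity
\begin{equation*}
\frac{d}{dx}\bigl[(1-x^2)^{\lambda+1/2}\bigr]=-(2\lambda+1)x\,(1-x^2)^{\lambda-1/2},
\end{equation*}
so that the first two terms combine to $\bigl\{(1-x^2)^{\lambda+1/2}\,({}^{r\!}G_\nu^{(\lambda)}(x))'\bigr\}'$. No obstacle is anticipated; the entire argument is a direct translation of the hypergeometric equation via an affine change of variables and a standard integrating factor. The only thing to be mildly careful about is book-keeping of signs and the factor of $1/4$ arising from the chain rule on the second derivative.
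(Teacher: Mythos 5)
Your proposal is correct and follows exactly the paper's argument: the paper also invokes the hypergeometric equation $z(1-z)F''+(c-(a+b+1)z)F'-abF=0$ and substitutes $a=-\nu$, $b=\nu+2\lambda$, $c=\lambda+\tfrac12$, $z=(1-x)/2$ to obtain \eqref{2ndDE-B}, with the self-adjoint form \eqref{2ndDE2-B} following from the same integrating-factor observation. Your chain-rule bookkeeping (the factors $-2$ and $4$, and the simplification $c-(a+b+1)z=(2\lambda+1)x/2$) is accurate, and in fact spells out details the paper leaves implicit.
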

\begin{proof}
Note that $F:={}_2F_1(a,b;c; z)$ satisfies the second-order  equation (cf. \cite[P. 94]{Andrews1999Book}):
\begin{equation}\label{SecondODE-B}
z(1-z)F''+\big(c-(a+b+1)z\big)F'-abF=0.
\end{equation}
%Denote $G:=	{}^{r\!}G_\nu^{(\lambda)}(x).$
Substituting $a,$ $b,$ $c$ and $z$ in \eqref{SecondODE-B} by $-\nu,$ $ \nu+2\lambda,$ $\lambda+1/2$ and $(1-x)/2$, respectively, we derive \eqref{2ndDE-B} from
the definition
 \eqref{rgjfdef}.
\end{proof}

Similar to the Gegenbauer polynomials, we have the following derivative relations.
\begin{prop} For real $\nu\ge k\in {\mathbb N},$ we have
\begin{equation}\label{dProperties-2}
\begin{split}
\frac{d^k}{dx^k}{}^{r\!}G_\nu^{(\lambda)}(x)
%=\frac{d^k}{dx^k} {}_2F_1\Big(\!\!-\nu, \nu+2\lambda;\lambda+\frac 1 2;\frac{1-x} 2\Big)
=(-1)^k\frac{(-\nu)_k(\nu+2\lambda)_k}{2^k(\lambda+1/2)_k} \, {}^{r\!}G_{\nu-k}^{(\lambda+k)}(x).
\end{split}\end{equation}
%\begin{equation}\label{Properties-2}
%\begin{split}
%\frac{d^k}{dx^k}{}^{r\!}G_\nu^{(\lambda)}(x)
%&=\frac{d^k}{dx^k} {}_2F_1\Big(\!\!-\nu, \nu+2\lambda;\lambda+\frac 1 2;\frac{1-x} 2\Big)\\
%&=(-1)^k\frac{(-\nu)_k(\nu+2\lambda)_k}{2^k(\lambda+\frac 1 2)_k} {}_2F_1\Big(\!\!-\nu+k, \nu+2\lambda+k;\lambda+\frac 1 2+k;\frac{1-x} 2\Big)\\
%&=(-1)^k\frac{(-\nu)_k(\nu+2\lambda)_k}{2^k(\lambda+1/2)_k} {}^{r\!}G_{\nu-k}^{(\lambda+k)}(x)\\
%&=(-1)^k\frac{\Gamma(-\nu+k)\Gamma(\nu+2\lambda+k)\Gamma(\lambda+1/2)}{2^k\Gamma(-\nu)\Gamma(\nu+2\lambda)\Gamma(\lambda+1/2+k)} {}^{r\!}G_{\nu-k}^{(\lambda+k)}(x)\\
%&=\frac{\Gamma(\nu+1)\Gamma(\nu+2\lambda+k)\Gamma(\lambda+1/2)}{2^k\Gamma(\nu+1-k)\Gamma(\nu+2\lambda)\Gamma(\lambda+1/2+k)} {}^{r\!}G_{\nu-k}^{(\lambda+k)}(x)
%,
%\end{split}\end{equation}
%where we used the  $(a)_j={\Gamma(a+j)}/{\Gamma(a)}$ and \eqref{nonitA}.
%Euler's reflection formula: % (cf.  \cite{Abr.I64}):
%\begin{equation*}%\label{nonitA}
%\Gamma(1-a)\Gamma(a)=\frac{\pi} {\sin (\pi a)},\quad a\not \in \mathbb Z.
%\end{equation*}
In particular, if $k=1,$  we have % \eqref{dProperties-2},
	\begin{equation}\label{dProperties-3}
\begin{split}
\frac{d}{dx}{}^{r\!}G_\nu^{(\lambda)}(x)
=	\frac{\nu(\nu+2\lambda)}{2\lambda+1}{}^{r\!}G_{\nu-1}^{(\lambda+1)}(x),\;\;\nu\ge 1.
\end{split}\end{equation}
\end{prop}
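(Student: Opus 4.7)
The plan is to reduce this to the standard derivative formula for the Gauss hypergeometric function and then recognize the resulting series as a GGF-F of shifted parameters. Recall the first-order relation already used in the paper (cf.\ \eqref{BoundGegB-5}):
\begin{equation*}
\frac{d}{dz}\,{}_2F_1(a,b;c;z)=\frac{ab}{c}\,{}_2F_1(a+1,b+1;c+1;z).
\end{equation*}
Iterating this identity $k$ times, the Pochhammer symbols accumulate in numerator and denominator, giving
\begin{equation*}
\frac{d^k}{dz^k}\,{}_2F_1(a,b;c;z)=\frac{(a)_k(b)_k}{(c)_k}\,{}_2F_1(a+k,b+k;c+k;z),
\end{equation*}
which is a routine induction on $k$.

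Next I would apply this with $a=-\nu,\ b=\nu+2\lambda,\ c=\lambda+\tfrac12$ and $z=(1-x)/2$, and handle the change of variable via the chain rule: each $x$-derivative pulls out a factor of $-\tfrac12$, so after $k$ derivatives one collects an overall factor $(-1)^k/2^k$. Combining with Definition~\ref{defnGGG}, this gives
\begin{equation*}
\frac{d^k}{dx^k}{}^{r\!}G_\nu^{(\lambda)}(x)=\frac{(-1)^k(-\nu)_k(\nu+2\lambda)_k}{2^k(\lambda+1/2)_k}\,{}_2F_1\Big(\!-\nu+k,\ \nu+2\lambda+k;\ \lambda+k+\tfrac12;\ \tfrac{1-x}{2}\Big).
\end{equation*}

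The final step is bookkeeping with the GGF-F parameters: under the substitution $\nu\mapsto\nu-k$ and $\lambda\mapsto\lambda+k$ in \eqref{rgjfdef}, the hypergeometric parameters become exactly $-(\nu-k)=-\nu+k$, $(\nu-k)+2(\lambda+k)=\nu+2\lambda+k$, and $(\lambda+k)+\tfrac12$, so the residual $_2F_1$ equals ${}^{r\!}G_{\nu-k}^{(\lambda+k)}(x)$. This yields \eqref{dProperties-2}. The special case $k=1$ in \eqref{dProperties-3} follows by noting $(-\nu)_1=-\nu$, $(\nu+2\lambda)_1=\nu+2\lambda$, $(\lambda+1/2)_1=\lambda+1/2$, so the prefactor collapses to $\nu(\nu+2\lambda)/(2\lambda+1)$.

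There is no real obstacle here; the argument is purely formal manipulation of the power series. The only mild point to watch is the convergence/well-definedness of the shifted hypergeometric series, which is guaranteed because $\lambda+k+\tfrac12>0$ (so $-c\notin\mathbb N$) and because the original series already converges on the relevant range of $z=(1-x)/2$, permitting termwise differentiation.
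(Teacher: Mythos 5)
Your proposal is correct and follows essentially the same route as the paper: the paper likewise invokes the $k$-th derivative identity $\frac{d^k}{dz^k}{}_2F_1(a,b;c;z)=\frac{(a)_k(b)_k}{(c)_k}{}_2F_1(a+k,b+k;c+k;z)$ (citing it directly rather than deriving it by induction from the first-order case) together with the definition \eqref{rgjfdef} and the chain-rule factor $(-1/2)^k$. The parameter bookkeeping and the $k=1$ specialization match the paper exactly.
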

\begin{proof} The formula \eqref{dProperties-2} is derived directly from  the identity (cf. \cite[(15.5.2)]{Olver2010Book}):
\begin{equation}\label{dProperties-1}\frac{d^k}{dz^k} {}_2F_1(a, b;c;z)=\frac{(a)_k(b)_k}{(c)_k} {}_2F_1(a+k, b+k;c+k;z),
\end{equation}
and the definition \eqref{rgjfdef}.
\end{proof}

%We have the derivative relation  (cf. \cite[(2.52)]{Liu2017arXiv})
%\begin{equation}\label{dRGGf}
%\begin{split}
%\frac{d}{dx}{}^{r\!}G_\nu^{(\lambda)}(x)
%=	\frac{\nu(\nu+2\lambda)}{2\lambda+1}{}^{r\!}G_{\nu-1}^{(\lambda+1)}(x),\quad \nu\ge 1.
%\end{split}\end{equation}
%Applying this formula recursively yields
%	\begin{align}
%\label{Diffk}
%\frac{d^k}{dx^k}{}^{r\!}G_\nu^{(\lambda)}(x)
%=	\frac{\nu(\nu+2\lambda)}{2\lambda+1}{}^{r\!}G_{\nu-k}^{(\lambda+k)}(x),\quad \nu\ge k.
%\end{align}
%The special value  (cf. \cite[(2.40)]{Liu2017arXiv})
%	\begin{equation}
%{}^{r\!}G_\nu^{(\lambda)}(0)=\sin \big(\pi ( {\nu} +1)/2\big)\frac{\Gamma(\lambda+1/2)\Gamma( {\nu}/2+1/2)}{\sqrt{\pi}\, \Gamma( {\nu}/2+\lambda+
%	1/2)}.
%\end{equation}

%Taking $a=-\nu,$ $b= \nu+2\lambda$ $c=\lambda+1/2$ and $z=(1-x)/2$ in \eqref{SecondODE-B} with \eqref{rgjfdef}, we have following
%proposition.
%Thanks to \eqref{SecondODE}, we know
% or $	{}^{l\!}G_\nu^{(\lambda)}(x)$ satisfy

For completeness, we quote  the following estimates,  which were very useful in the error analysis in \cite{Liu2017arXiv}.
\begin{prop}\label{LBoundForGegPoly}  {\bf (see \cite[Thms 2.1-2.2]{Liu2017arXiv}).}
%\end{prop}
%\begin{prop}
	For   $0< \lambda < 1$ and real $\nu\ge 0$, we have
\begin{equation}\label{BoundGeg-C}
\max_{|x|\le 1}\big\{	(1-x^2)^{\lambda/2} \big|{}^{r\!}G_\nu^{(\lambda)}(x)\big|\big\}\le
	\varrho_\nu^{(\lambda)},
\end{equation}
where
\begin{equation}\label{kappa-C}
\begin{split}
\varrho_\nu^{(\lambda)}=\frac{\Gamma(\lambda+1/2)}{\sqrt \pi}\bigg(
\frac{ \cos^2(\pi\nu/ 2)\Gamma^2(\nu/2+1/2) }{\Gamma^2( ({\nu}+1)/2+\lambda)}+\frac{4\sin ^2\big(\pi  {\nu} /2\big)}{\nu^2+2\lambda \nu+\lambda}\frac{\Gamma^2(\nu/2+1)}{\Gamma^2({\nu}/2+\lambda)}\bigg)^{1/2}.
\end{split}	\end{equation}
For $\lambda \ge 1$ and real $\nu\ge 0$, we have
\begin{equation}\label{maxvalueS}
\max_{|x|\le 1}\big\{(1-x^2)^{\lambda-1/2}\big|{}^{r\!}G_\nu^{(\lambda)}(x)\big| \big\}\le
\kappa_\nu^{(\lambda)}
,
\end{equation}
where
\begin{equation}\label{kappaNl}
\begin{split}
\kappa_\nu^{(\lambda)}=\frac{\Gamma(\lambda+ 1/2)}{\sqrt \pi}\bigg(
\frac{ \cos^2(\pi\nu/ 2) \Gamma^2( ({\nu}+1)/ 2)}{\Gamma^2( ({\nu}+1)/2+\lambda)}+\frac{4\sin ^2\big(\pi  {\nu} /2\big)}{2\lambda-1+\nu(\nu+2\lambda)}\frac{\Gamma^2({\nu}/2+1)}{\Gamma^2({\nu}/2+\lambda)}\bigg)^{1/2}.
\end{split}	\end{equation}
\end{prop}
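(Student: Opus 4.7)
My plan is to prove both bounds by a Sonin/Szeg\H{o}-type auxiliary-function argument driven by the Sturm--Liouville equation \eqref{2ndDE-B}. Set $G(x)={}^{r\!}G_\nu^{(\lambda)}(x)$ and, matching the weight appearing in each bound, introduce
\[
 M(x) = (1-x^2)^{\lambda/2}\, G(x) \quad \text{for } 0<\lambda<1, \qquad M(x) = (1-x^2)^{\lambda-1/2}\, G(x) \quad \text{for } \lambda\ge 1.
\]
In each case I would substitute into \eqref{2ndDE-B} and rewrite the equation in the self-adjoint-like form $(1-x^2)M''(x) - x\, M'(x) + (1-x^2)\rho(x)\, M(x) = 0$, where in the first case the natural choice is $\rho(x) = \bigl((\nu+\lambda)^2(1-x^2) - \lambda(\lambda-1)\bigr)/(1-x^2)^2$, and an analogous $\rho$ (with the sign of $\lambda(\lambda-1)$ absorbed by the heavier weight) governs the second case.

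Next I would introduce the Sonin-type quantity $H(x) = M(x)^2 + \rho(x)^{-1} (M'(x))^2$ and compute $H'(x)$. A clean cancellation, driven by the ODE above, yields
\[
 H'(x) = \frac{2\lambda(\lambda-1)x}{(\nu+\lambda)^2(1-x^2)^2 - \lambda(\lambda-1)(1-x^2)}\, (M'(x))^2
\]
for the $0<\lambda<1$ case (and an analogous formula for $\lambda\ge 1$). For $0<\lambda<1$ the denominator is strictly positive on $(-1,1)$ and the numerator has the sign of $-x$, so $H$ increases on $(-1,0)$ and decreases on $(0,1)$; for $\lambda\ge 1$ with the heavier weight the sign structure flips in the right way so that $H$ is again maximized at $x=0$. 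In both cases one concludes $M(x)^2 \le H(x) \le H(0)$ for $x\in(-1,1)$.

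It then remains to evaluate $M(0)$ and $M'(0)$ explicitly. Using the classical Gauss formula for ${}_2F_1(a,b;c;1/2)$ together with the derivative identity \eqref{dProperties-3} (which expresses $G'$ in terms of ${}^{r\!}G_{\nu-1}^{(\lambda+1)}$, again evaluable at zero), I would obtain closed-form values of $G(0)$ and $G'(0)$; after applying Euler's reflection $\Gamma(z)\Gamma(1-z) = \pi/\sin(\pi z)$ these collapse to the $\cos^2(\pi\nu/2)$ and $\sin^2(\pi\nu/2)$ combinations that appear in $\varrho_\nu^{(\lambda)}$ and $\kappa_\nu^{(\lambda)}$. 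Substituting into $H(0) = M(0)^2 + \rho(0)^{-1}(M'(0))^2$ then produces the stated bounds.

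The main obstacle will be extending the pointwise bound from the open interval $(-1,1)$ up to the closed interval $[-1,1]$: at the endpoints $\rho^{-1}$ vanishes while $M'$ can be singular, so the monotonicity alone does not immediately control the limiting values. This requires a separate continuity argument for $H$ on $[-1,1]$ using the boundary asymptotics of ${}^{r\!}G_\nu^{(\lambda)}$ near $x=\pm 1$ (the behaviour recorded in \eqref{defBS00}--\eqref{defBS0B}), from which one verifies that $\rho^{-1}(M')^2$ admits finite one-sided limits and hence $H$ is continuous up to the boundary. Once this is in place, the inequality $M(x)^2 \le H(0)$ is valid for every $x\in[-1,1]$ and yields \eqref{BoundGeg-C} and \eqref{maxvalueS}.
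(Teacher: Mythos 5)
The paper does not actually prove this proposition: it is quoted without proof from \cite[Thms 2.1--2.2]{Liu2017arXiv}, so there is no in-paper argument to compare against; your Sonin--Szeg\H{o} auxiliary-function scheme is precisely the standard route (and the one the cited source follows), and it does work: the monotonicity of $H$ with maximum at $x=0$, together with the values of $G(0)$ and $G'(0)$ obtained from the Gauss value of ${}_2F_1$ at $z=1/2$ and \eqref{dProperties-3}, reproduces the stated constants, with $\rho(0)=\nu^2+2\lambda\nu+\lambda$ in the first case and $2\lambda-1+\nu(\nu+2\lambda)$ in the second. Two small corrections to your outline: for the weight $(1-x^2)^{\lambda-1/2}$ the transformed equation is $(1-x^2)N''+(2\lambda-3)xN'+\big(2\lambda-1+\nu(\nu+2\lambda)\big)N=0$, \emph{not} the form with first-order coefficient $-x$, so the correct auxiliary function there is $H=N^2+(1-x^2)(N')^2/\big(2\lambda-1+\nu(\nu+2\lambda)\big)$, for which $H'=-4(\lambda-1)x\,(N')^2/\big(2\lambda-1+\nu(\nu+2\lambda)\big)$ and the sign argument goes through for $\lambda\ge 1$. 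Also, the endpoint issue you flag is lighter than you fear: once $M^2\le H(0)$ holds on $(-1,1)$, it extends to $x=\pm1$ by continuity of the weighted function $M$ alone (which follows from ${}^{r\!}G_\nu^{(\lambda)}(1)=1$ and the boundary behaviour \eqref{defBS00}--\eqref{defBS0B}), so continuity of $H$ itself up to the boundary is not needed.
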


\medskip

\begin{appendix}
\section{Proof of Lemma  \ref{UperBoundf} }\label{AppendixA}
\renewcommand{\theequation}{A.\arabic{equation}}

%\begin{proof}
We first show that
\begin{equation}\begin{split}\label{IntRep-13}			
\frac{t^2}{4}\cos^2\varphi\, (\cosh(t/2))^{\frac 43} +\sin^2 \varphi(\cosh  t)^{\frac 23}<|g(\varphi, t)|^2
<\frac{t^2}{4}\cos^2\varphi\cosh^4 (t/2)
+\sin^2 \varphi\cosh^2 t,
\end{split}\end{equation}
and
\begin{equation}\begin{split}\label{IntRep-15}	
|\partial_t g(\varphi,t)|\le& \Big(\frac{t}{3} \sin\varphi + \frac{1}{2} |\cos\varphi| \Big) \cosh t.
%\\ &=\Big(\frac{1}{36} t^2\sin^2\varphi + \frac{9}{16} \cos^2\varphi\Big)(e^{2t}+e^{-2t}+2).
\end{split}\end{equation}

It is clear that
\begin{equation}\label{IntRep-9}\begin{split}
|g(\varphi, t)|^2=\frac{\cos^2\varphi (\cosh t-1)^2+ \sin^2 \varphi\sinh^2 t}{t^2} .
\end{split}\end{equation}	
Recall the properties of hyperbolic functions (cf. \cite[(4.32.1), (4.32.2), (4.35.20)]{Olver2010Book}): for $t>0,$
\begin{equation}\label{IntRep-10}\begin{split}
(\cosh t)^{\frac 13}<\frac{\sinh t}{t};  \quad \tanh t<t;\quad  \sinh \frac{t}{2}=\Big(\frac{\cosh t-1}{2}\Big)^{\frac 1 2}.
\end{split}\end{equation}	
Then we derive
\begin{equation}\label{IntRep-11}\begin{split}
(\cosh t)^{\frac 13}<\frac{\sinh t}{t}<\cosh t,\quad \forall\, t>0,
\end{split}\end{equation}	
and
\begin{equation}\begin{split}	\label{IntRep-12}		
\frac{1}{2}\big(\cosh (t/2)\big)^{\frac 2 3}<\frac{\cosh t-1}{t^2} =\frac{1}{2}\Big(\frac{\sinh( t/2)}{t/2}\Big)^2<\frac{1}{2} \cosh^2(t/2).
\end{split}\end{equation}
Thus we obtain \eqref{IntRep-13} from \eqref{IntRep-9} and \eqref{IntRep-11}-\eqref{IntRep-12} immediately.

A direct calculation from \eqref{gttf} yields
\begin{equation}\label{derivative-g}		
\begin{split}	
\partial_t g(\varphi, t) %&= \frac{\ri \sin(\varphi-\ri t) t -\cos(\varphi-\ri t) + \cos\varphi}{t^2}\\
= \frac{\cos \varphi\, (t\sinh t-\cosh t+1) +\ri\sin\varphi\, (t\cosh t-\sinh t)}{t^2},
%\frac{\ri\sin\varphi (t\cosh t-\sinh t)-\cos \varphi (t\sinh t+\cosh t-1)}{t^2},
\end{split}\end{equation}
and
\begin{equation}\begin{split}	\label{absderivative-g}		
|\partial_t g(\varphi, t)|^2=\frac{\cos^2 \varphi\, (t\sinh t-\cosh t+1)^2+\sin^2\varphi\, (t\cosh t-\sinh t)^2}{t^4} .
\end{split}\end{equation}
We next  show that for $t>0$,		
\begin{equation}\begin{split}\label{Bound-A}			
\frac{ t\cosh t-\sinh t}{t^3}<\frac{1}{3}\cosh t,
\end{split}\end{equation}
and		
\begin{equation}\begin{split}\label{Bound-B}		
\frac{t\sinh t-\cosh t+1}{t^2}<\frac{1}{2}\cosh t.
\end{split}\end{equation}
To prove \eqref{Bound-A},  we denote
$h(t):=t^3\cosh t- 3t\cosh t+3\sinh t.$  Then  for $t>0,$
\begin{equation}\begin{split}\label{dht}	
h'(t)= t^3\sinh t+3t(t\cosh t - \sinh t)>t^3\sinh t>0,
\end{split}\end{equation}	
where we used the property:  $t\cosh t>\sinh t$ (cf. \eqref{IntRep-10}). Therefore, $h(t)$ is strictly ascending, so for all  $t>0,$
$$ h(t)=t^3\cosh t- 3t\cosh t+3\sinh t>h(0)=0,$$
which implies \eqref{Bound-A}.  As  $$(t\sinh t-\cosh t+1)'=t\cosh t >0,\;\;\; t>0,$$
we have $t\sinh t-\cosh t+1 >0$ for all $t>0.$ Denoting
$\hat h(t):=t^2\cosh t- 2t\sinh t+2\cosh t-2,$  we find   for $t>0,$
\begin{equation}\begin{split}\label{dhatht}	
\hat h'(t)=t^2\sinh t>0,\;\; {\rm so} \;\;  \hat h(t)> \hat h(0)=0,
\end{split}\end{equation}	
which yields  \eqref{Bound-B}.

From \eqref{absderivative-g}, \eqref{Bound-A} and \eqref{Bound-B}, we obtain
\begin{equation}\begin{split}\label{IntRep-14}	
|\partial_t g(\varphi, t)|^2\le&\frac{1}{9} t^2\sin^2\varphi \cosh^2 t+ \frac{1}{4} \cos^2\varphi\cosh^2 t,
%\\ &=\Big(\frac{1}{36} t^2\sin^2\varphi + \frac{9}{16} \cos^2\varphi\Big)(e^{2t}+e^{-2t}+2).
\end{split}\end{equation}
which leads to  \eqref{IntRep-15}.

Now, we are ready to derive \eqref{ftbnds}-\eqref{ftbndsB}.
Using the mean-value theorem for the real part and imaginary part of $f^{(\lambda)}(\varphi,t)$, respectively, we obtain
\begin{equation}\label{UperBoundf-1}
\begin{split}
f^{(\lambda)}(\varphi,t)=\frac{g^{\lambda-1}(\varphi, t)-	g^{\lambda-1}(\varphi, 0)} t
=\RE \big\{\partial_t	g^{\lambda-1}(\varphi, \xi_1)\big\}+\ri \,\IM \big\{\partial_t	 g^{\lambda-1}(\varphi, \xi_2)\big\},
\end{split}\end{equation}
for $\xi_i=\xi_i(t)\in (0,t), i=1,2,$ and $\varphi\in (0,\pi).$ Hence, we have
\begin{equation}\label{UperBoundf-2}
\begin{split}
|f^{(\lambda)}(\varphi,t)|\le 2\sup_{0<\xi<t}|\partial_t	g^{\lambda-1}(\varphi, \xi)|=2|\lambda-1|\sup_{0<\xi< t}\big\{|g(\varphi,\xi)|^{\lambda-2}|\partial_t g(\varphi,\xi)|\big\}.
\end{split}\end{equation}
We now estimate its upper bound.  From \eqref{IntRep-15}, we obtain that for $\xi\in (0,t)$ and $\varphi\in (0,\pi),$
\begin{equation}\begin{split}\label{UperBoundf-3}
|\partial_t g(\varphi,\xi)|\le \Big(\frac{\xi}{3} \sin\varphi + \frac{1}{2} |\cos\varphi| \Big) \cosh \xi
\le \Big(\frac{t}{3} \sin\varphi + \frac{1}{2} |\cos\varphi| \Big) e^ t.
%\le \Big(\frac{t}{3} \sin\varphi + \frac{1}{2} |\cos\varphi| \Big) \cosh t\\
%\le &  \Big(\frac{t}{3} \sin\varphi + \frac{1}{2} |\cos\varphi| \Big) e^ t,
\end{split}\end{equation}
%where we used the monotonicity of $ \cosh t$.
It remains to estimate the upper bound of $|g(\varphi, \xi)|^{\lambda-2}.$  We proceed with two cases.

\vskip 4pt
i) For $0<\lambda\le 2,$   we obtain from the lower bound $g$ in   \eqref{IntRep-13} that  for  $0<\xi<t$,
\begin{equation}\begin{split}\label{UperBoundf-4}
|g(\varphi, \xi)|^{\lambda-2}\le \Big(\frac{1}{4}\cos^2\varphi \cosh ^{4/3} (\xi/2)\xi^2 +\sin^2 \varphi\cosh^{2/3}  \xi\Big)^{\lambda/2-1}
\le\sin^ {\lambda-2}\varphi,
\end{split}\end{equation}
where we used the fact  the function in $\xi$ is strictly decreasing,  since $\lambda/2-1<0.$ Thus, we obtain \eqref{ftbnds} from
\eqref{UperBoundf-2}-\eqref{UperBoundf-4}.

\vskip 4pt

ii) For $\lambda> 2$, we obtain from the upper bound of $g$ in   \eqref{IntRep-13} that
\begin{equation}\begin{split}\label{UperBoundf-5}	
|g(\varphi, \xi)|^{\lambda-2}& \le\Big(\frac{1}{4}\cos^2\varphi\cosh^4 (\xi/2) \xi^2
+\sin^2 \varphi\cosh^2 \xi\Big)^{\lambda/2-1}\\
& \le\Big(\frac{1}{4}\cos^2\varphi\cosh^4 (t/2) t^2
+\sin^2 \varphi\cosh^2 t\Big)^{\lambda/2-1}\\
& \le \Big(\max\Big \{\frac{1}{2}\cos^2\varphi\cosh^4 (t/2) t^2,2\sin^2 \varphi\cosh^2 t\Big\}\Big)^{\lambda/2-1}\\
& \le \Big(\max\Big \{\frac{1}{2}t^2\cos^2 \varphi \cosh^4 t,2\sin^2 \varphi\cosh^2 t\Big\}\Big)^{\lambda/2-1}\\
& \le 2^{1-\lambda/2}(\cos\varphi)^{\lambda-2}(\cosh t) ^{2\lambda-4}t^{\lambda-2}+2^{\lambda/2-1}(\sin \varphi)^{\lambda-2}(\cosh t)^{\lambda-2}\\
&\le  2^{1-\lambda/2}(\cos\varphi)^{\lambda-2} e^{2(\lambda-2)t} t^{\lambda-2}+2^{\lambda/2-1}(\sin \varphi)^{\lambda-2} e^{(\lambda-2)t}\\
& = 2^{\lambda/2-1}(\sin \varphi)^{\lambda-2} e^{(\lambda-2)t}\Big(1+\frac{|\cot \varphi|^{\lambda-2}} {2^{\lambda-2}}\, t^{\lambda-2} e^{(\lambda-2)t}\Big).
\end{split}\end{equation}
Therefore, we obtain \eqref{ftbndsB} from   \eqref{UperBoundf-2}-\eqref{UperBoundf-3}  and \eqref{UperBoundf-5}.
%\end{proof}

\end{appendix}

%-------------------------------
  %\bibliography{Books,KrasikovIlia,WangLilian,GuoBenyu,Karniadakis,GuoBenqiAndBabuska,ref}

 \end{document}